\newtheorem{Con}{Conjecture}
\newtheorem{thm}{Theorem}[section]
\newtheorem{lemma}[thm]{Lemma}
\newtheorem{prop}[thm]{Proposition}
\newtheorem{defn}[thm]{Definition}
\newtheorem{remark}[thm]{Remark}
\newtheorem{corollary}[thm]{Corollary}
\newtheorem{ex}[thm]{Example}
\newtheorem{prob}[thm]{Problem}
\newcommand{\PreserveBackslash}[1]{\let\temp=\\#1\let\\=\temp}
\newcolumntype{C}[1]{>{\PreserveBackslash\centering}p{#1}}
\newcolumntype{R}[1]{>{\PreserveBackslash\raggedleft}p{#1}}
\newcolumntype{L}[1]{>{\PreserveBackslash\raggedright}p{#1}}
\renewcommand{\a}{\alpha}
\newcommand{\Aut}{\textnormal{Aut}}
\renewcommand{\b}{\beta}
\newcommand{\B}{\textnormal{B}}
\newcommand{\ct}{\centering}
\newcommand{\Cl}{\textnormal{Cl}}
\newcommand{\D}{\Delta}
\newcommand{\End}{\textnormal{End}}
\newcommand{\E}{\mathcal{E}}
\newcommand{\F}{\mathbb{F}}
\newcommand{\Gal}{\textnormal{Gal}}
\newcommand{\Hom}{\textnormal{Hom}}
\renewcommand{\P}{\mathfrak{P}}
\newcommand{\GB}{G_{\B}}
\renewcommand{\k}{\Bbbk}
\newcommand{\N}{\mathbb{N}}
\newcommand{\Norm}{\textnormal{Norm}}
\renewcommand{\O}{\mathcal{O}}
\newcommand{\qq}{(q_1,q_2)}
\newcommand{\Q}{\mathbb{Q}}
\newcommand{\se}{\subseteq}
\newcommand{\Z}{\mathbb{Z}}
\newcommand{\Eleni}[1]{\textcolor{blue}{{\sf (Eleni:} {{#1})}}}
\newcommand{\Daniele}[1]{\textcolor{teal}{#1}}
\begin{document}

\title[Elliptic curves over Hasse pairs]{Elliptic curves over Hasse pairs}

% \title[short text for running head]{full title}

%    Only \author and \address are required; other information is
%    optional.  Remove any unused author tags.

%    author one information
% \author[short version for running head]{name for top of paper}
\author{Eleni Agathocleous}
\address{\parbox{\linewidth}{Max-Planck-Institut f\"ur Mathematik, \\ Vivatsgasse 7, D-53111 Bonn, Germany.}}
%\curraddr{}
\email{agathocleous@mpim-bonn.mpg.de}
%\thanks{}

%    author two information
\author{Antoine Joux}
\address{\parbox{\linewidth}{CISPA Helmholtz Center for Information Security, \\ Stuhlsatzenhaus 5, 66123 Saarbr\"ucken, Germany.}}
%\curraddr{}
\email{joux@cispa.de}
%\thanks{}

%    author three information
\author{Daniele Taufer} 
\address{\parbox{\linewidth}{KU Leuven, \\ Celestijnenlaan 200A - box 2402, 3001 Leuven, Belgium.}}
%\curraddr{}
\email{daniele.taufer@kuleuven.be}
%\thanks{}

%    \subjclass is required.
\subjclass[2020]{11G07, 11G15, 14K02}

\date{\today}

%\dedicatory{}
 
%    Abstract is required.

\maketitle 

\vspace{-0.5cm}
\begin{center} with an appendix by \\ \MakeUppercase{\footnotesize{Pieter Moree and Efthymios Sofos}}\end{center}
\begin{abstract} We call a pair of distinct prime powers $\qq = (p_1^{a_1},p_2^{a_2})$ a Hasse pair if $|\sqrt{q_1}-\sqrt{q_2}| \leq 1$. For such pairs, we study the relation between the set $\E_1$ of isomorphism classes of elliptic curves defined over $\F_{q_1}$ with $q_2$ points, and the set $\E_2$ of isomorphism classes of elliptic curves over $\F_{q_2}$ with $q_1$ points. 
When both families $\E_i$ contain only ordinary elliptic curves, we prove that their isogeny graphs are isomorphic. When supersingular curves are involved, we describe which curves might belong to these sets. We also show that if both the $q_i$'s are odd and $\E_1 \cup \E_2 \neq \emptyset$, then $\E_1 \cup \E_2$ always contains an ordinary elliptic curve.
Conversely, if $q_1$ is even, then $\E_1 \cup \E_2$ may contain only supersingular curves precisely when $q_2$ is a given power of a Fermat or a Mersenne prime. In the case of odd Hasse pairs, we could not rule out the possibility of an empty union $\E_1 \cup \E_2$, but we give necessary conditions for such a case to exist. In an appendix, Moree and Sofos consider how frequently Hasse pairs occur using analytic number theory, making a connection with Andrica's conjecture on the difference between consecutive primes. \end{abstract}

%%%%%%%%%%%%%%%%%%%%%%%%%%%%%%%%%%%%%%%%%%%%%%%%%%%%%%%%%%%%%%%%%%%%%%%%%%%%%%%

\section{Introduction}
Given an elliptic curve $E$ over a finite field $\F_q$ of characteristic $p$, its number of points $|E(\F_q)|$ is directly related with a number $t$, called the trace of $E/\F_q$, which is defined as
\[ t = q + 1 - |E(\F_q)|. \]
All elliptic curves over $\F_q$ with the same number of rational points have the same trace, which we will denote by $t$.
We can then consider the set $\E(t)$ of $\F_q$-isomorphism classes of elliptic curves $E/\F_q$ with trace $t$:
\[ \E(t) = \{ E/\F_q \ : \ |E(\F_q)| = q + 1 -t\}/ \{ \F_{q}\textnormal{-isomorphisms} \}. \]

It is of both mathematical and cryptographic interest, to study such families $\E(t)$ over some finite field $\F_q$, when the number of points, and therefore the trace $t$, is given.
In this paper, we are interested in a `dual formulation' of this problem: given two prime powers $q_1, q_2$, we investigate the two sets
\[
    \E_1 = \{ E_1/\F_{q_1} \ : \ |E_1(\F_{q_1})| = q_2 \} / \{ \F_{q_1}\textnormal{-isomorphisms} \},
\]
and
\[
    \E_2 = \{ E_2/\F_{q_2} \ : \ |E_2(\F_{q_2})| = q_1 \} / \{ \F_{q_2}\textnormal{-isomorphisms} \}.
\]
To exclude trivial cases, we further ask that the two prime powers $q_1, q_2$ be different and satisfy the relation 
\[ |\sqrt{q_1}-\sqrt{q_2}| \leq 1. \]
When this relation holds true, we call such a pair $(q_1,q_2)$ a \emph{Hasse pair}.
We remark that the case $|\sqrt{q_1}-\sqrt{q_2}| > 1$ is degenerate: when $q_1,q_2$ do not belong to the Hasse interval of each other, then both sets $\E_i$ are necessarily empty.

The integer $t_1=q_1+1-q_2$ (resp. $t_2=q_2+1-q_1$) will be referred to as the trace of $\E_1$ (resp. $\E_2$), as it equals the trace of every curve (if any) in the corresponding set.

When $E_1 \in \E_1$ and $E_2 \in \E_2$ both arise as good reductions of a global curve $E$, the pair $\qq$ was called \emph{amicable} or $2$-\emph{aliquot cycle} for $E$ \cite{AmicableOverNF,SilvStan}, and the distribution of amicable primes for a given $E$ was notably shown to depend on whether $E$ has complex multiplication \cite{AmicableDistribution1,AmicableDistribution2,SilvStan}.
If the $E_i$'s share the same endomorphism ring $\O$, the pair $\qq$ has been referred to as \emph{dual elliptic} associated to $\O$ \cite{DualElliptic}.
In cryptography, such pairs of curves $(E_1,E_2)$ are called \emph{2-cycles}, and may be employed to design recursive zk-SNARK protocols \cite{SNARK}. Those are particularly efficient when the considered curves are pairing-friendly, which poses strong constraints on the feasible curves \cite{CyclesRevisited,CyclesLimitations}.
Examples of such applications appeared, among others, in \cite{TweedleCurves,PlutoErisCurves,PastaCurves,BN382curves}, and we refer the interested reader to \cite{SurveyCycles} for an extensive survey on this topic.

\subsection{Novel contributions}

Unlike previous related works, in this paper we do not start from a global curve. We observe instead that for a given Hasse pair, the associated traces $t_i$ are linearly dependent (Remark~\ref{rmk:sumtraces}) and they have the same discriminant $\D$ (Lemma~\ref{lem:sameD}), which prescribes whether the corresponding elliptic curves are ordinary or supersingular (Lemma~\ref{lem:TypeSqr}).
Furthermore, even though the set $\E_i$ may occasionally be empty, even when $\qq$ is a Hasse pair, this never occurs when the $q_i$'s are pure primes (Lemma \ref{lem:pq}); and in this case, both sets $\E_i$ consist of ordinary elliptic curves, except for a few small exceptions (Remark~\ref{rmk:pp}).

For every choice of odd Hasse pair $(p_1^{a_1}, p_2^{a_2})$, at least one of the $p_i$'s is proved to split completely in %the ring of integers of
$\Q(\sqrt{\D})$ (Proposition~\ref{PrimeSplits}).

For any Hasse pair $\qq$, both the sets $\E_i$ are likely to consist of ordinary elliptic curves, and in this case, the directed multigraphs of their prime degree $\F_{q_i}$-isogenies are isomorphic (Theorem~\ref{thm:graphsIso}).
%\Daniele{Moreover, under special circumstances, those graphs can also be detected over subfields where the considered curves have trace $1$ (Corollary \ref{cor:anormalous_proj}).}
However, special choices of the $q_i$'s may lead to different configurations.
When both the $q_i$'s are odd, at most one of the sets $\E_i$ may contain supersingular elliptic curves (Proposition~\ref{prop:ss}), while this fails with even characteristic in a specific list of cases involving Fermat and Mersenne primes (Theorem~\ref{thm:even-ss}). 
Moreover, if $q_1$ is even, then the existence of a curve in $\E_1$ is sufficient to guarantee that $\E_2$ is also non-empty (Lemma~\ref{lem:ord-ord2} and Theorem~\ref{thm:even-ss}).

The only open case left is $\E_1 \cup \E_2 = \emptyset$.
In even characteristic this can happen, but we found only one such instance, namely $\qq = (2^8, 3^5)$.
We did not find any such a pair when both the $q_i$'s are odd, and we proved that such an example (if existent) should satisfy strict numerical conditions (Proposition~\ref{NotEmpty}).

\begin{prob} \label{prob:empty-empty}
    Characterizing Hasse pairs $\qq$ such that $\E_1$ and $\E_2$ are both empty.
    In particular, deciding if the set of such pairs is finite (or even empty, in odd characteristics).
\end{prob}
To the best of our knowledge, Problem \ref{prob:empty-empty} is open, except for cases of very small exponents ruled out by Corollary~\ref{cor:TwoPower}.
The possible configurations of sets $\E_i$ discussed in the present paper are displayed by Tables \ref{table:odd-odd} and \ref{table:even-odd}.

Finally, given the interesting properties that these sets of elliptic curves and their graphs exhibit, it is natural to ask, not only out of mathematical curiosity but also from a cryptographic perspective, whether there are many Hasse pairs.
This question is directly related to a long-standing conjecture known as Andrica's conjecture, which states that the difference of the square roots of any two consecutive primes is always less than one. In Appendix~\ref{Ap:PietSof}, Moree and Sofos discuss this conjecture in the context of prime powers, and they prove an asymptotic result regarding the number of Hasse pairs in which a given prime $p$ can be a member.

\smallskip

\subsection{Notation}
We denote the set of natural numbers by $\N = \{0,1,\dots\}$, and its positive subset by $\Z_{>0} = \{1,2,\dots\}$.
We write $p^a || N$ when $p^a|N$ but $p^{a+1} \nmid N$.
When the order of the elements of a pair is irrelevant, we denote it by $(\qq)$ to indicate $\qq$ or $(q_2,q_1)$.

\subsection{Paper organization}

In Section~\ref{sec:Intro2} we recall a theorem of Waterhouse on valid traces, and we prove a corollary of this result regarding elliptic curves of $j$-invariant zero. In Section~\ref{HassePairs}, we define Hasse pairs and we establish several properties of its associated discriminant $\D$ and traces $t_1$ and $t_2$.
In Section~\ref{non-ordinary cases} we focus on the non-ordinary cases, i.e. when at least one of the sets $\E_1$ or $\E_2$ is empty or is comprised of supersingular elliptic curves.
Section~\ref{split} is devoted to the splitting behavior of the base primes of a given Hasse pair in the corresponding imaginary quadratic field $\Q(\sqrt{\D})$.
In Section~\ref{sec:ord-ord}, we deal with the ordinary case and we establish the correspondence between the isogeny graphs of the elliptic curves in $\E_1$ and of those in $\E_2$. 
Some numerical and visual examples of such graphs are presented in Section~\ref{examples}.
Finally, in Section~\ref{conclusions}, we summarise the results of the current paper, and we draw our conclusions.

%%%%%%%%%%%%%%%%%%%%%%%%%%%%%%%%%%%%%%%%%%%%%%%%%%%%%%%%%%%%%%%%%%%%%%%%%%%%%%%
\section{Valid Traces}\label{sec:Intro2}

Let $q \in \Z_{>0}$ be a prime power.
We consider all elliptic curves over $\F_q$ with the same number of points or, equivalently, with the same trace $t \in \Z$.
We denote the set of $\F_q$-isomorphism classes of such curves by $\E(t)$.
The following theorem of Waterhouse determines when $\E(t)$ is not empty and the type of elliptic curves it contains.

\begin{thm}[{\cite[Theorem 4.1]{Waterhouse}}] \label{thm:Waterhouse}
Let $\E(t)$ be the set of elliptic curves of trace $t$ over $\F_{q} = \F_{p^a}$. Then $\E(t)$ is non-empty precisely in the following cases.
\begin{enumerate}[$(a)$]
    \item \label{W:ordinary} $|t| \leq 2\sqrt{q}$, with $\gcd(t,q) = 1$.
    \item \label{W:sseven} $t = \pm 2\sqrt{q}$, and $a$ is even.
    \item \label{W:odd1} $t = \pm \sqrt{q}$, with $p \not\equiv 1 \bmod 3$ and $a$ is even.
    \item \label{W:odd2} $t = \pm \sqrt{p^{a+1}}$, with $p \in \{2,3\}$ and $a$ is odd.
    \item \label{W:0} $t = 0$, and $(\textnormal{I})$ $a$ is odd or $(\textnormal{II})$ $a$ is even, and $p \not\equiv 1 \bmod 4$.
\end{enumerate}
The elliptic curves of case~\eqref{W:ordinary} are ordinary.
The curves of case~\eqref{W:sseven} are supersingular and have all their endomorphisms defined over ${\F_{q}}$, while the rest are supersingular but do not have all their endomorphisms defined over ${\F_{q}}$.
\end{thm}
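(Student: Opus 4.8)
The plan is to separate the necessity of the Hasse bound from the finer existence and type statements, and then to treat the ordinary and supersingular regimes with entirely different tools. First I would recall that for any elliptic curve $E/\Fq$ the Frobenius endomorphism $\pi$ satisfies the quadratic relation $\pi^2 - t\pi + q = 0$ with $t = q + 1 - |E(\Fq)|$, and that $\pi$ is a Weil $q$-number, i.e.\ all its archimedean absolute values equal $\sqrt{q}$. This immediately yields Hasse's inequality $|t| \le 2\sqrt{q}$, establishing the bound common to all five cases. The same relation shows that $E$ is ordinary exactly when $p \nmid t$ (equivalently $\gcd(t,q)=1$): $E$ is supersingular precisely when $\pi$ fails to be a $p$-adic unit, which for a root of $x^2 - tx + q$ with $p \mid q$ happens exactly when $p \mid t$. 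This pins down case~\eqref{W:ordinary} as the ordinary locus and confines the remaining cases to $p \mid t$.

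For the ordinary direction I would prove sufficiency by Deuring's lifting and reduction theory. Given $t$ with $|t| \le 2\sqrt{q}$ and $\gcd(t,p)=1$, the polynomial $x^2 - tx + q$ is irreducible with a root $\pi$ generating an imaginary quadratic order $\O$ inside $\Q(\sqrt{t^2-4q})$; since $x^2 - tx + q \equiv x(x-t) \bmod p$ with $p \nmid t$, the prime $p$ splits in $\O$. One then selects an elliptic curve over a number field with complex multiplication by $\O$ and a prime above $p$ of the correct residue degree, and invokes Deuring's theorem that good ordinary reduction preserves the CM order, so that the reduced curve has Frobenius $\pi$ and hence trace $t$. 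This realizes every admissible ordinary trace and confirms that such curves are ordinary.

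The supersingular regime is where the bulk of the work lies, and this is the part I expect to be the main obstacle. Here $p \mid t$, and writing $t^2 = mq$ one checks that only $m \in \{0,1,2,3,4\}$ is compatible with $|t|\le 2\sqrt{q}$ together with $t \in \Z$ and $q = p^a$. Unwinding the integrality constraint $p^a \mid t^2$ case by case forces exactly the arithmetic conditions listed: $m=4$ gives $t = \pm 2\sqrt{q}$ with $a$ even; $m=1$ gives $t = \pm\sqrt{q}$ with $a$ even; $m \in \{2,3\}$ forces $p \in \{2,3\}$ with $a$ odd, producing the traces $\pm\sqrt{p^{a+1}}$ of case~\eqref{W:odd2}; and $m=0$ is the trace-zero case~\eqref{W:0}. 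To decide which of these admissible Weil numbers are genuinely realized over $\Fq$, and to read off the endomorphism structure, I would invoke Honda--Tate theory: the isogeny class attached to a candidate $\pi$ is non-empty precisely when the endomorphism algebra, a quaternion algebra over $\Q$ ramified exactly at $p$ and $\infty$ in the supersingular case, has consistent local invariants at every place. The delicate points, and the origin of the congruences $p \not\equiv 1 \bmod 3$ in case~\eqref{W:odd1} and $p \not\equiv 1 \bmod 4$ in case~\eqref{W:0}, are (i) computing these invariants and matching them against the Brauer group to certify non-emptiness, and (ii) distinguishing curves all of whose endomorphisms are defined over $\Fq$, which occurs exactly when $\pi \in \Z$, i.e.\ in the $t = \pm 2\sqrt{q}$ case where $t^2 - 4q = 0$, from those whose full endomorphism ring descends only over a quadratic extension.

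Finally I would assemble these strands: the Hasse bound and the divisibility $p \mid t$ give the coarse ordinary/supersingular trichotomy, Deuring's theory supplies every admissible ordinary trace, and the Honda--Tate analysis pins down precisely the supersingular traces together with the rationality of their endomorphisms. Collating the outcomes reproduces the five cases and the ordinary/supersingular attribution exactly as stated, with case~\eqref{W:sseven} singled out as the unique supersingular family whose endomorphisms are all defined over $\Fq$.
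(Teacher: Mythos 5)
The paper itself offers no proof of this statement: it is quoted verbatim from Waterhouse, whose original argument indeed runs along the lines you propose (the Honda--Tate classification of Weil numbers combined with Deuring's lifting theory for the ordinary traces), so your overall route is the standard and correct one, and your identification of where the congruences $p \not\equiv 1 \bmod 3$ and $p \not\equiv 1 \bmod 4$ come from (splitting behaviour of $p$ in $\Q(\sqrt{-3})$ and $\Q(i)$ feeding into the local invariants) is accurate. However, there is one genuine gap in the necessity direction of your supersingular analysis.

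You write ``writing $t^2 = mq$'' as though $q \mid t^2$ followed from $p \mid t$ together with $|t| \le 2\sqrt{q}$ and $q = p^a$. It does not: for instance $t = p$ with $q = p^3$, or $t = p^2$ with $q = p^5$, satisfy $p \mid t$ and the Hasse bound, yet $q \nmid t^2$, and your case analysis as written never excludes them even though they appear in none of cases $(b)$--$(e)$. What is needed is the stronger valuation fact that a non-ordinary elliptic curve has both Frobenius slopes equal to $\frac{1}{2}$: the $p$-divisible group has height $2$ and dimension $1$, so the Newton slopes are either $\{0,1\}$ (ordinary) or $\{\frac12,\frac12\}$, whence $v_p(t) \ge a/2$ and hence $q \mid t^2$. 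Equivalently, $\pi^2/q$ is an algebraic number of absolute value $1$ at every archimedean place and a unit at every finite place, hence a root of unity by Kronecker's theorem, giving $t^2 = (2 + \zeta + \bar{\zeta})\,q \in \{0, q, 2q, 3q, 4q\}$. Alternatively, the exclusion can be done inside your own Honda--Tate step: for $t = p^2$, $q = p^5$ the invariants at the two primes above $p$ are $\frac{2}{5}$ and $\frac{3}{5}$, so the simple abelian variety attached to $\pi$ has dimension $5$, not $1$. Either way this must be argued, and your proposal presents it as a triviality. A smaller, related slip: Honda--Tate guarantees that \emph{every} Weil $q$-number is realized by a nonempty isogeny class of simple abelian varieties, so the condition to verify is not ``non-emptiness'' but that the dimension formula $2\dim = [\End^0 : \Q(\pi)]^{1/2}\,[\Q(\pi):\Q]$ yields $1$; with that correction, the remainder of your sketch, including the criterion that all endomorphisms are $\F_q$-rational exactly when $\pi \in \Z$, i.e. $t = \pm 2\sqrt{q}$, is sound.
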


By \cite[Theorems 4.1 and 4.2]{Waterhouse}, one can deduce the values of the $j$-invariants for some of the cases of Theorem~\ref{thm:Waterhouse}.
Since we have not found this result in the literature, we state and prove it here, for completeness. 

\begin{prop}\label{prop:zero1728}
    The elliptic curves corresponding to cases \eqref{W:odd1} and \eqref{W:odd2} in Theorem~\ref{thm:Waterhouse} have $j$-invariant $0$.
\end{prop}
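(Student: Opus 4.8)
The plan is to read off, from the trace, the quadratic field $\Q(\pi)$ generated by the Frobenius $\pi$, and then to exhibit inside the geometric endomorphism ring an automorphism of order $3$ or $6$, which in characteristic $\geq 5$ forces the $j$-invariant to be $0$; the small characteristics $p\in\{2,3\}$ are disposed of separately. First I would compute the discriminant $t^2-4q$ in each case. In case~\eqref{W:odd1} one has $t^2-4q=q-4q=-3q$, and since $a$ is even $q$ is a perfect square, so $\Q(\pi)=\Q(\sqrt{-3q})=\Q(\sqrt{-3})$. In case~\eqref{W:odd2} with $p=3$ one gets $t^2-4q=3^{a+1}-4\cdot 3^a=-3^a$, again giving $\Q(\pi)=\Q(\sqrt{-3})$ because $a$ is odd; while for $p=2$ one gets $t^2-4q=-2^{a+1}$, hence $\Q(\pi)=\Q(\sqrt{-1})$.

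Next I would dispatch the characteristic $2$ and $3$ cases, which cover all of case~\eqref{W:odd2} together with the instances $p\in\{2,3\}$ of case~\eqref{W:odd1}. In these characteristics there is a unique supersingular $j$-invariant, and it equals $0$ (note that $1728\equiv 0$ both modulo $2$ and modulo $3$, which reconciles the field $\Q(\sqrt{-1})$ found above). Since Theorem~\ref{thm:Waterhouse} guarantees that the curves attached to cases~\eqref{W:odd1} and~\eqref{W:odd2} are supersingular, any such curve in characteristic $2$ or $3$ automatically has $j=0$.

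The remaining situation is case~\eqref{W:odd1} with $p\geq 5$ (so necessarily $p\equiv 2\bmod 3$), where $\Q(\pi)=\Q(\sqrt{-3})$. Writing $t=\epsilon\sqrt q$ with $\epsilon\in\{\pm1\}$, I would set $u=\pi/\sqrt q$ inside $\End^0(E)=\End_{\overline{\F_q}}(E)\otimes\Q=B_{p,\infty}$. From $\pi^2-t\pi+q=0$ one computes $u^2-\epsilon u+1=0$, so $u$ has reduced norm $1$ and reduced trace $\epsilon$, i.e. $u$ is a root of unity of order $6$ (if $\epsilon=1$) or $3$ (if $\epsilon=-1$). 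The key point is then to show that $u$ actually lies in the maximal order $\O=\End_{\overline{\F_q}}(E)$: localizing, at each prime $\ell\neq p$ the factor $\O_\ell$ is a maximal order isomorphic to $M_2(\Z_\ell)$ and $\sqrt q=p^{a/2}$ is a unit there, so $u=\pi\cdot(p^{a/2})^{-1}\in\O_\ell$; at $\ell=p$ the completion is a division algebra whose maximal order is the valuation ring consisting of all integral elements, and $u$ is integral of reduced norm $1$, hence $u\in\O_p$. Therefore $u\in\O=\End_{\overline{\F_q}}(E)$ and, having reduced norm $1$, it is a geometric automorphism of $E$ of order $3$ or $6$. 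In characteristic $\geq 5$ the only elliptic curve whose automorphism group contains an element of order $3$ is the one with $j=0$ (the curves with $j=1728$ have automorphism group $\mu_4$, which has no such element), so $j(E)=0$.

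The main obstacle is this last step: upgrading the formal identity $\pi=\sqrt q\cdot u$ in the endomorphism algebra to the assertion that $u$ is an honest endomorphism, equivalently that $\O$ contains $\Z[u]=\Z[\zeta_6]$. The local computation above is the crux, and it relies essentially on supersingularity through the facts that $\O$ is a maximal order (Deuring) and that at $p$ every integral quaternion lies automatically in the maximal order. Everything else — the discriminant computations and the automorphism-group classification in characteristic $\geq 5$ — is routine.
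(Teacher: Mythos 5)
Your proof is correct, but it takes a genuinely different route from the paper's. The paper stays inside the $\F_q$-rational endomorphism ring, which for these supersingular curves (not having all endomorphisms defined over $\F_q$) is an order in the imaginary quadratic field $\Q(\pi)$: it computes $\D = t^2-4q$ exactly as you do, but then invokes \cite[Theorem 4.2-(3)]{Waterhouse}, which says the occurring orders have conductor coprime to $q$; since the conductor of $\Z[\pi]$ is a power of $p$ in each of cases \eqref{W:odd1} and \eqref{W:odd2}, the only possibility is the maximal order of $\Q(\sqrt{-3})$ (or of $\Q(i)$ when $p=2$), and that order is the CM order of curves with $j=0$ (resp.\ $j = 1728 = 0$ in characteristic $2$). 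You instead work geometrically in the quaternionic order $\O = \End_{\overline{\F}_q}(E)$, and your crux — upgrading $u = \pi/\sqrt{q}$ from an element of the algebra $B_{p,\infty}$ to an honest endomorphism — is handled soundly: the local-global principle ($u \in \O$ iff $u$ lies in every completion $\O_\ell$), the unit argument at $\ell \neq p$ where $\sqrt{q} = p^{a/2}$ is invertible, and integrality at $p$ where the unique maximal order of the division algebra is the valuation ring, are all correct; note also that $u \neq \pm 1$ since $u^2 - \epsilon u + 1 = 0$ has no rational roots, so $u$ genuinely has order $3$ or $6$ and forces $j = 0$ in characteristic $\geq 5$ by the classification of automorphism groups. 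Your dispatching of characteristics $2$ and $3$ via uniqueness of the supersingular $j$-invariant is a clean shortcut that covers all of case \eqref{W:odd2} (and $p \in \{2,3\}$ of case \eqref{W:odd1}) with no conductor analysis at all. In comparison: the paper's argument is shorter and leverages Waterhouse's fine classification directly, but silently leans on the Deuring correspondence identifying the maximal quadratic order with the curve of $j$-invariant $0$; yours avoids Waterhouse's Theorem 4.2 entirely at the cost of quaternionic machinery (maximality of $\O$ by Deuring, local structure of $B_{p,\infty}$), and it buys a stronger concrete fact — that these curves carry a geometric automorphism of order $3$ or $6$ — from which the statement about the $j$-invariant follows.
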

\begin{proof}
Since the CM-discriminant is $\D = t^2-4q$, we have
    \[t = \begin{cases}
    \pm p^{\frac{a}{2}} & \textnormal{ with } a \textnormal{ even,} \\
    \pm p^{\frac{a+1}{2}} & \textnormal{ with } p \in \{2,3\}, a \textnormal{ odd.} \\
    % 0 & \textnormal{ with } a \textnormal{ even,} \\
\end{cases} \implies \D = \begin{cases}
    -3 q & \textnormal{ in case } \eqref{W:odd1}, \\
    (p-4)q & \textnormal{ in case } \eqref{W:odd2}. \\
    % -4q & \textnormal{ in case } \eqref{W:0}$--II$.
\end{cases}
    \]
The elliptic curves corresponding to the above cases, do not have all their endomorphisms defined over $\F_q$, hence by \cite[Theorem 4.2-(3)]{Waterhouse} the possible endomorphism rings arising from the elliptic curves in $\E(t)$ have conductor coprime to $q$.

In case \eqref{W:odd1} the possible conductors are those dividing $\sqrt{q}$, therefore the unique possible order is the maximal one, i.e. the ring of integers of $\Q(\sqrt{-3})$, which is the endomorphism ring of curves with $j$-invariant $0$.

Similarly, in case \eqref{W:odd2}, if $p=2$ the possible conductors are those dividing $2^{(a+1)/2}$, therefore the unique possible order is the ring of integers of $\Q(i)$, which is defined by curves with $j$-invariant $1728 = 0 \in \F_q$. If $p=3$ the conductors need to divide $3^{(a-1)/2}$, therefore the unique endomorphism order is the ring of integers of $\Q(\sqrt{-3})$, which is again defined by curves with $j$-invariant $0$.
\iffalse
\noindent \textbf{Lemma~\ref{lem:consecutiveprimepowers}}
Let $\a,\b \geq 2$ be integers and let $p \in \N$ be a prime.
If $2^\a$ and $p^\b$ are consecutive integers, then $\a = 3, \b = 2$ and $p = 3$.
\proof Since $2^\a$ and $p^\b$ are consecutive, we have $p^\b = 2^\a \pm 1$.

If $p^\b = 2^\a - 1$, then $p^\b \equiv - 1 \bmod 4$, therefore $p \equiv -1 \bmod 4$ and $\b$ is odd. This implies
\[ 2^\a = p^\b + 1 = (p+1)(p^{\b-1} - p^{\b-2} + \dots - p + 1), \] 
which is not possible as $\sum_{i=0}^{\b-1} (-p)^{i}$ is a sum of an odd number of odd terms, thus it is strictly greater than $1$ since $\b > 1$.

Let now assume that $p^\b = 2^\a + 1$. If $\b$ is odd, then
\[ 2^\a = p^\b - 1 = (p-1)(p^{\b-1} + p^{\b-2} + \dots + p + 1), \]
which is again not possible as $\sum_{i=0}^{\b-1} p^{i}$ is a sum of an odd number of odd terms, hence it is strictly greater than $1$ since $\b > 1$.
Therefore we must have $\b = 2\b'$, which implies
\[ 2^\a = p^{2\b'} - 1 = (p^{\b'} + 1)(p^{\b'} - 1). \]
The only powers of $2$ that differ by $2$ are
\[ p^{\b'} + 1 = 4 \quad \textnormal{and} \quad p^{\b'} - 1 = 2, \]
which give $p = 3$ and $\b' = 1$, so $\b = 2$ and $\a = 3$.
\fi
\endproof
\end{proof}

%%%%%%%%%%%%%%%%%%%%%%%%%%%%%%%%%%%%%%%%%%%%%%%%%%%%%%%%%%%%

\section{Hasse pairs}\label{HassePairs}

Throughout the paper, $q_1=p_1^{a_1}$ and $q_2=p_2^{a_2}$ will denote arbitrary prime powers. Given such a pair $(q_1,q_2)$, we consider the (possibly empty) set of elliptic curves defined over $\F_{q_1}$ with $q_2$ rational points, and respectively the set of elliptic curves over $\F_{q_2}$ with $q_1$ points.
To avoid trivial repetitions, we consider these curves up to $\F_{q_i}$-isomorphism.

\begin{defn}\label{HasseDefn}
    Let $\qq$ be a pair of distinct prime powers.
    For $(i,j) = ((1,2))$, we define
    \[
    \E_i = \{ E_i/\F_{q_i} \ : \ |E_i(\F_{q_i})| = q_j \} / \{ \F_{q_i}\textnormal{-isomorphisms} \}.
    \]
    We also define the \emph{trace} $t_i$ corresponding to such a pair as
    \[ t_i = q_i + 1 - q_j. \]
\end{defn}

\begin{remark} \label{rmk:sumtraces}
The integers $t_i$ are the traces of the Frobenius morphism for every elliptic curve (if any) in $\E_i$ \cite[Remark V.2.6]{Silverman}.
Moreover, by adding their defining equations, we immediately see that
\begin{equation*}
    t_1 + t_2 = 2.
\end{equation*} 
\end{remark}

\begin{remark}
    The results of the present paper would hold even when $q_1 = q_2$. We have excluded this case from Definition~\ref{HasseDefn}, since it leads to the trivial relations $\E_1 = \E_2$ and $t_1=t_2=1$.
    These curves of trace $1$ are often called \emph{anomalous} curves, and are all ordinary by Theorem~\ref{thm:Waterhouse} (a).
\end{remark}

\begin{defn} \label{discriminant}
We define the \emph{discriminant} associated with the pair $\qq$ as
\[
    \D = (q_1-q_2)^2-2(q_1+q_2)+1.
\] 
\end{defn}
As we prove below, this is the maximal CM-discriminant of the curves in the sets $\E_i$, when they are not empty. 
An analogous result for amicable curves was already observed in \cite[Proposition 3.4]{SilvStan}.

\begin{lemma} \label{lem:sameD}
Let $t_1,t_2$ and $\D$ be defined as above. Then
\[ t_1^2 - 4q_1 = \D = t_2^2 - 4q_2. \]
\end{lemma}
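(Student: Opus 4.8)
The statement is a purely algebraic identity in the two prime powers $q_1, q_2$, so the plan is simply to expand and collect terms, using the symmetry of $\D$ to halve the work. I would treat the two equalities separately, proving the first by a direct computation and deducing the second from it without any new calculation.

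First I would establish the left equality $t_1^2 - 4q_1 = \D$. Writing $t_1 = q_1 + 1 - q_2 = (q_1 - q_2) + 1$ and expanding the square gives
\[ t_1^2 = (q_1 - q_2)^2 + 2(q_1 - q_2) + 1. \]
Subtracting $4q_1$ and simplifying the linear part via $2(q_1 - q_2) - 4q_1 = -2(q_1 + q_2)$, the right-hand side becomes exactly $(q_1 - q_2)^2 - 2(q_1 + q_2) + 1$, which is $\D$ by Definition~\ref{discriminant}.

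For the right equality $t_2^2 - 4q_2 = \D$, I would avoid repeating the computation and argue by symmetry. The expression defining $\D$ is invariant under exchanging $q_1$ and $q_2$, since both $(q_1 - q_2)^2$ and $q_1 + q_2$ are symmetric in the two variables; meanwhile $t_2 = q_2 + 1 - q_1$ is obtained from $t_1$ precisely by this swap. Hence applying the identity already proved for $t_1$, with the roles of $q_1$ and $q_2$ reversed, immediately yields $t_2^2 - 4q_2 = \D$.

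There is no genuine obstacle here: the only point requiring a little care is the sign bookkeeping when absorbing $-4q_1$ into the linear term, and the observation that the symmetry of $\D$ lets the second equality follow for free. One could alternatively invoke $t_1 + t_2 = 2$ from Remark~\ref{rmk:sumtraces} to relate the two traces directly, but the symmetry argument keeps the proof shortest.
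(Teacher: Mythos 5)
Your proposal is correct and follows essentially the same route as the paper: a direct expansion of $t_1^2 - 4q_1$ using $t_1 = (q_1 - q_2) + 1$ to obtain $\D$, followed by the observation that $\D$ is symmetric in $q_1$ and $q_2$ to get the second equality for free. The sign bookkeeping $2(q_1-q_2) - 4q_1 = -2(q_1+q_2)$ is handled correctly, so nothing is missing.
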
\label{EqualDiscs}
\proof By the definition of $t_1$ we have
\[ t_1^2 - 4q_1 = (q_1-q_2)^2 + 2(q_1-q_2) + 1 - 4q_1 = \D, \]
which is symmetric in $q_1$ and $q_2$, thus it also equals $t_2^2 - 4q_2$.
\endproof

Hasse's Theorem \cite[Theorem V.1.1]{Silverman} implies that the sets $\E_i$ can be non-empty if the considered $q_i$'s are close enough. This motivates the following definition.

\begin{defn}[Hasse pair] \label{defn:HassePair}
Let $q_1, q_2$ be distinct prime powers. We call the pair $\qq$ a \emph{Hasse pair} iff
%We define \emph{Hasse pair} a pair of distinct prime powers $\qq$ such that
\[ | \sqrt{q_1} - \sqrt{q_2} | \leq 1. \]
When both $q_1$ and $q_2$ are odd, we will refer to it as an \emph{odd Hasse pair}.
\end{defn}

The following lemma links the above definition to Hasse's theorem. 

\begin{lemma} \label{lem:Hasse}
Let $t_1, t_2$ be defined as above. The following are equivalent:
\begin{enumerate}
    \item\label{Heq:i} $(q_1,q_2)$ is a Hasse pair,
    \item\label{Heq:ii} $|t_1| \leq 2\sqrt{q_1}$,
    \item\label{Heq:iii} $|t_2| \leq 2\sqrt{q_2}$.
\end{enumerate}
\end{lemma}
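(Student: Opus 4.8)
The plan is to prove the three equivalences by unwinding the definitions and relating $|t_i|\le 2\sqrt{q_i}$ back to the closeness condition $|\sqrt{q_1}-\sqrt{q_2}|\le 1$. I would first observe that since $t_1 = q_1+1-q_2$, the condition $|t_1|\le 2\sqrt{q_1}$ is the pair of inequalities $-2\sqrt{q_1} \le q_1+1-q_2 \le 2\sqrt{q_1}$. The key idea is to complete the square: rewriting $q_1 \mp 2\sqrt{q_1} + 1 = (\sqrt{q_1}\mp 1)^2$, the two bounds become
\[
(\sqrt{q_1}-1)^2 \le q_2 \le (\sqrt{q_1}+1)^2,
\]
which upon taking square roots (all quantities being nonnegative, since $\sqrt{q_1}\ge 1$ for any prime power $q_1\ge 2$) is exactly $|\sqrt{q_1}-\sqrt{q_2}|\le 1$. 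This establishes $\eqref{Heq:i}\Leftrightarrow\eqref{Heq:ii}$.

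**Leveraging symmetry for the third condition.** For $\eqref{Heq:i}\Leftrightarrow\eqref{Heq:iii}$, I would exploit that the Hasse-pair condition $|\sqrt{q_1}-\sqrt{q_2}|\le 1$ is manifestly symmetric in $q_1$ and $q_2$. Running the identical completion-of-square argument with the roles of $q_1$ and $q_2$ interchanged (using $t_2 = q_2+1-q_1$) yields $\eqref{Heq:i}\Leftrightarrow\eqref{Heq:iii}$ at once, with no new computation. Alternatively, one could route through Lemma~\ref{lem:sameD}: since $t_1^2-4q_1 = t_2^2-4q_2 = \D$, the two conditions $|t_i|\le 2\sqrt{q_i}$ are each equivalent to $\D \le 0$, giving $\eqref{Heq:ii}\Leftrightarrow\eqref{Heq:iii}$ directly. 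I would likely mention the $\D\le 0$ reformulation, as it clarifies why the discriminant is the natural invariant and ties the lemma to the surrounding material.

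**Anticipated obstacle.** The computation here is entirely elementary, so there is no deep obstacle; the only point requiring a little care is the monotonicity step when taking square roots, which is valid precisely because every prime power satisfies $q_i \ge 2$, hence $\sqrt{q_i}\ge 1 > 0$, so all three quantities $\sqrt{q_1}-1$, $\sqrt{q_2}$, $\sqrt{q_1}+1$ are nonnegative and the square-root function is monotone on the relevant range. I would state this explicitly to make the equivalence of the squared and unsquared inequalities rigorous, and then the proof is complete.
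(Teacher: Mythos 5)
Your proof is correct and takes essentially the same approach as the paper's: the paper likewise passes between the Hasse condition and the trace bound by squaring the chain $\sqrt{q_2}-1 \leq \sqrt{q_1} \leq \sqrt{q_2}+1$ (your completion of the square is the same computation run in the opposite direction), and then invokes the symmetry of condition \eqref{Heq:i} to get the other equivalence. Your alternative route for $\eqref{Heq:ii} \Leftrightarrow \eqref{Heq:iii}$ via Lemma~\ref{lem:sameD}, observing that each trace bound is equivalent to $\D \leq 0$, is also valid and is a nice conceptual shortcut the paper does not use, though it still needs the direct argument to bring condition \eqref{Heq:i} into the equivalence.
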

\proof Condition \eqref{Heq:i} is equivalent to
\[ \sqrt{q_2} - 1 \leq \sqrt{q_1} \leq \sqrt{q_2} + 1. \]
Since all the quantities are positive, this is equivalent to its squared version, namely
\[ q_2 - 2 \sqrt{q_2} + 1 \leq q_1 \leq q_2 + 2 \sqrt{q_2} + 1. \]
This can be written as
\[ -2 \sqrt{q_2} \leq -(q_2 - q_1 + 1) \leq 2 \sqrt{q_2}, \]
which is condition \eqref{Heq:iii}.
Since condition \eqref{Heq:i} is symmetric, the equivalence between \eqref{Heq:i} and \eqref{Heq:ii} follows in the same way, by reversing the roles of $q_1$ and $q_2$.
\endproof

An elliptic curve over $\F_{q_i}$ is called \emph{supersingular} if its trace is divisible by $p_i$, and \emph{ordinary} otherwise. Since all the curves in $\E_i$ (if any) have the same trace $t_i$, then the set $\E_i$ is either empty or contains curves that have the same type (i.e. they are all ordinary or all supersingular). This motivates the following definition.

\begin{defn}
    We call the set $\E_i$ \emph{ordinary} (resp. \emph{supersingular}) if it is non-empty and contains ordinary (resp. supersingular) elliptic curves. 
    %In these cases, we also define the \emph{trace} of $\E_i$ to be the trace of any curve in $\E_i.$ <- This is a repetition of Definition 2.1 and Remark 2.2
\end{defn}

\begin{remark} \label{rmk:uniquej}
    We recall that two ordinary curves are isomorphic over $\F_q$ if and only if they have the same size and the same $j$-invariant \cite[Proposition 14.19]{Cox}. Hence, when the set $\E_i$ is ordinary, it can be identified with the set containing the $j$-invariants of all elliptic curves that belong to $\E_i$.
\end{remark}

\begin{lemma} \label{lem:TypeSqr}
Let $\qq$ be a Hasse pair. For both $i \in \{1,2\}$, the following hold.
\begin{itemize}
    \item If $\gcd(\D,q_i) = 1$, then $\E_i$ is ordinary.
    \item If $\gcd(\D,q_i) \neq 1$, then $\E_i$ is either supersingular or empty.
\end{itemize}
\end{lemma}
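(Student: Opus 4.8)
The plan is to recast both dichotomy conditions on $\gcd(\D, q_i)$ as the single arithmetic condition $p_i \mid t_i$, which is exactly what the definition uses to distinguish supersingular from ordinary curves, and then to appeal to Waterhouse's theorem to secure non-emptiness in the coprime case.

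First I would establish the equivalence $\gcd(\D, q_i) = 1 \iff p_i \nmid t_i$. Because $q_i = p_i^{a_i}$ is a prime power, its only prime divisor is $p_i$, so $\gcd(\D, q_i) = 1$ is the same as $p_i \nmid \D$. Rewriting $\D = t_i^2 - 4 q_i$ by Lemma~\ref{lem:sameD}, and noting that $p_i \mid q_i$ forces $p_i \mid 4 q_i$ (with no exception in characteristic two), I obtain $p_i \mid \D \iff p_i \mid t_i^2 \iff p_i \mid t_i$, the last equivalence holding because $p_i$ is prime.

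With this in hand, the two bullets follow quickly. In the coprime case $p_i \nmid t_i$ gives $\gcd(t_i, q_i) = 1$, and since $\qq$ is a Hasse pair, Lemma~\ref{lem:Hasse} supplies $|t_i| \leq 2\sqrt{q_i}$; these are precisely the hypotheses of case~\eqref{W:ordinary} of Theorem~\ref{thm:Waterhouse}, so $\E_i = \E(t_i)$ is non-empty and consists of ordinary curves, i.e.\ $\E_i$ is ordinary in the sense of the preceding definition. In the non-coprime case $p_i \mid t_i$, so any curve in $\E_i$ has trace divisible by $p_i$ and is supersingular by definition; hence $\E_i$ is supersingular or empty.

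I do not expect a genuine obstacle here: the substance is the bookkeeping translation of the first step, and the only place demanding a moment's attention is verifying that $p_i \mid \D \iff p_i \mid t_i$ holds uniformly, including $p_i = 2$ --- which it does, since $p_i \mid q_i \mid 4q_i$ regardless of $p_i$. Everything else is a direct citation of Lemmas~\ref{lem:sameD} and~\ref{lem:Hasse} together with Waterhouse's criterion.
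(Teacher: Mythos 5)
Your proof is correct and follows essentially the same route as the paper: both reduce $\gcd(\D,q_i)=1$ to $\gcd(t_i,q_i)=1$ via $\D = t_i^2 - 4q_i$ (Lemma~\ref{lem:sameD}), invoke the definition of supersingularity through the trace, and use Lemma~\ref{lem:Hasse} with case~\eqref{W:ordinary} of Theorem~\ref{thm:Waterhouse} to get non-emptiness in the ordinary case. Your explicit check that the equivalence $p_i \mid \D \iff p_i \mid t_i$ holds uniformly in characteristic two is a careful touch, but it is the same bookkeeping the paper performs with its chain of gcd equalities.
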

\proof We notice that
\[ \gcd(t_i,q_i) = 1 \iff \gcd(t_i^2-4q_i,q_i) = 1 \iff \gcd(\D,q_i) = 1. \]
All the curves of $\E_i$ (if any) have trace $t_i$ (Remark \ref{rmk:sumtraces}), hence they are ordinary when $\gcd(\D,q_i) = 1$, and supersingular when $\gcd(\D,q_i) \neq 1$.
Since $|t_i| \leq 2 \sqrt{q_i}$ by Lemma~\ref{lem:Hasse}, in the ordinary case the set $\E_i$ is always non-empty (see Theorem~\ref{thm:Waterhouse}, case \eqref{W:ordinary}).
\endproof

When dealing with pure primes, the sets $\E_i$ are almost always comprised of ordinary curves, as shown by the next lemma.

\begin{lemma} \label{lem:pq}
If $(p_1,q_2)$ is a Hasse pair, with $p_1$ prime, then $\E_1$ is non-empty.
Moreover, if $\E_1$ is supersingular then $q_2=p_1+1$ or $p_1\in\{2,3\}$.
\end{lemma}
\proof Since $\qq$ is Hasse, by Lemma~\ref{lem:Hasse} we have $|t_1| \leq 2\sqrt{p_1} < 2p_1.$
If $p_1 \in \{2,3\}$, the integer values lying between $-2p_1$ and $2p_1$ are either coprime to $p_1$ \eqref{W:ordinary}, or equal to $\pm p_1$ \eqref{W:odd2}, or equal to $0$ \eqref{W:0}.

When $p_1 > 3$,  we have the tighter bound $|t_1| \leq 2\sqrt{p_1} < p_1$.
In this case, the integers between $-p_1$ and $p_1$ are either coprime to $p_1$ \eqref{W:ordinary} or equal to $0$ \eqref{W:0}.
In all such cases, $\E_1$ is non-empty by Theorem~\ref{thm:Waterhouse}.
Furthermore, if $p_1 > 3$ the only supersingular case has $ t_1 = 0$ 
which implies $q_2 = p_1+1$.
\endproof

\begin{remark} \label{rmk:pp}
As a consequence of Lemma~\ref{lem:pq}, whenever $(p_1,p_2)$ is a Hasse pair consisting of pure primes (i.e. $a_1 = a_2 = 1$), in all but a few small cases, both sets $\E_i$ are ordinary.
A straightforward verification of those cases shows that $\E_1$ is actually supersingular precisely when $(p_1,p_2) \in \{ (2,3),(2,5),(3,7) \}$.
\end{remark}

\begin{ex} \label{ex:ord-ss}
Since $\sqrt{7}-\sqrt{3} = 0.913\dots$, then $(p_1,p_2) = (3,7)$ is an odd Hasse pair.
Up to isomorphism, there is only one elliptic curve in $\E_1$, defined by
$ y^2 = x^3 + 2x + 1. $
As $t_1 = 3 + 1 - 7 = -3$, this curve is supersingular. Similarly, there is only one curve in $\E_2$, defined by
$y^2 = x^3 + 4. $
It has trace $t_2 = 2 - t_1 = 5$, hence it is ordinary. Finally, we notice that the $j$-invariant of both curves is $0$, and this is not a coincidence as we show in Proposition~\ref{prop:j=0}.
\end{ex}

%%%%%%%%%%%%%%%%%%%%%%%%%%%%%%%%%%%%%%%%%%%%%%%%%%%%%%%%%%%%%%%%%%%%%%%

\section{Non-ordinary cases}\label{non-ordinary cases}

In this section, we discuss the relation between $\E_1$ and $\E_2$ when $\gcd(\D, q_1q_2) \neq 1$, namely when at least one of these sets is either supersingular or empty.

We begin by noticing that non-ordinary cases may occur only in different characteristics.

\begin{lemma} \label{lem:diffchar}
    If $p_1 = p_2$, then $\gcd(\D, q_1q_2) = 1$.
\end{lemma}
\proof If $p_1=p_2$, then $q_i = p_1^{e_i}$. 
Therefore
\[
    \D = (q_1-q_2)^2-2(q_1+q_2)+1 \equiv 1 \bmod p_1,
\]
which implies $\gcd(\D, q_1q_2) = 1$.
\endproof

\subsection{Odd characteristics}
When $q_1$ and $q_2$ are both odd, then so are $t_1$ and $t_2$.

\begin{prop} \label{prop:ss}
Let $\qq$ be an odd Hasse pair. If $\E_1$ is supersingular, then $\E_2$ is ordinary.
Thus, if $\E_1 \cup \E_2$ is non-empty, then it always contains an ordinary elliptic curve.
\end{prop}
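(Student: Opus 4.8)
The plan is to reduce the claim to showing $\gcd(\D,q_2)=1$, since then $\E_2$ is ordinary by Lemma~\ref{lem:TypeSqr}. First I would note that, as $q_1$ and $q_2$ are both odd, each of $t_1=q_1+1-q_2$ and $t_2=q_2+1-q_1$ is odd. Because $\E_1$ is supersingular, we have $p_1\mid t_1$, and Theorem~\ref{thm:Waterhouse} constrains the possible values of $t_1$: the even-trace supersingular cases \eqref{W:sseven} and \eqref{W:0} are excluded by parity, leaving only case \eqref{W:odd1}, where $t_1=\pm\sqrt{q_1}$ and $\D=-3q_1$, and case \eqref{W:odd2}, where $p_1=3$, $t_1=\pm\sqrt{p_1^{a_1+1}}$ and $\D=-q_1$. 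In particular, every prime dividing $\D$ lies in $\{3,p_1\}$.

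Next I would assume for contradiction that $\gcd(\D,q_2)\neq 1$, i.e. $p_2\mid\D$, so that $p_2\in\{3,p_1\}$. The possibility $p_2=p_1$ is ruled out by Lemma~\ref{lem:diffchar}: equal characteristics force $\gcd(\D,q_1q_2)=1$, contradicting the supersingularity of $\E_1$. Moreover, whenever $\D$ is a pure power of $3$ — namely in case \eqref{W:odd2}, and in case \eqref{W:odd1} when $p_1=3$ — the relation $p_2\mid\D$ again yields $p_2=3=p_1$, already excluded. Hence the only surviving configuration is case \eqref{W:odd1} with $p_1\neq 3$, where $a_1$ is even, $t_1=\pm\sqrt{q_1}$, $\D=-3p_1^{a_1}$, and $p_2=3$, so that $q_2=3^{a_2}$.

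Ruling out this last configuration is the step I expect to be the main obstacle, as it is where genuine arithmetic enters rather than bookkeeping on $\D$. Setting $n=\sqrt{q_1}=p_1^{a_1/2}$, an integer with $n\geq 5$ and $3\nmid n$, the defining relation $t_1=q_1+1-q_2$ gives
\[ q_2 = n^2 \mp n + 1 = 3^{a_2}. \]
I would then compute the $3$-adic valuation of the middle term: the congruence $3\mid n^2\mp n+1$ forces $n\equiv \mp 1\pmod 3$, and substituting $n=3k\mp 1$ gives $n^2\mp n+1 = 3\,(3k^2\mp 3k+1)$ with $3k^2\mp 3k+1\equiv 1\pmod 3$. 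Thus the valuation equals exactly $1$, so $a_2=1$ and $q_2=3$; but then $n^2\mp n+1=3$ forces $n\in\{1,2\}$, contradicting $n\geq 5$. This contradiction proves $\gcd(\D,q_2)=1$, whence $\E_2$ is ordinary. The final assertion follows by symmetry: if $\E_1\cup\E_2\neq\emptyset$, some $\E_i$ is non-empty and hence ordinary or supersingular; if it is ordinary we are done, and if it is supersingular the argument above (with the indices possibly interchanged) makes the other set ordinary, so $\E_1\cup\E_2$ contains an ordinary curve in every case.
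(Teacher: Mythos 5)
Your proof is correct and takes essentially the same route as the paper: parity of $t_1$ together with Theorem~\ref{thm:Waterhouse} pins down $t_1$ and $\D$ (cases \eqref{W:odd1} and \eqref{W:odd2}), Lemma~\ref{lem:diffchar} excludes $p_2=p_1$, and everything reduces to ruling out $q_2=3^{a_2}$ with $n^2\mp n+1=3^{a_2}$, exactly as in the paper's proof. Your one departure is the finishing step: where the paper splits into $a_2=1$ (handled via the Hasse interval around $3$) and $a_2>1$ (handled via the irreducibility of $x^2\pm x+1$ over $\Z/9\Z$), you compute the exact $3$-adic valuation $v_3(n^2\mp n+1)=1$ directly, which disposes of both subcases uniformly --- a slightly cleaner ending to the same argument.
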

\proof Let $E_1 \in \E_1$ be a supersingular curve. We first consider the case $p_1 \neq 3$. 
Since $t_1$ is odd, only case~\eqref{W:odd1} of Theorem~\ref{thm:Waterhouse} can apply. Thus, $a_1$ needs to be even and $t_1 = \pm \sqrt{q_1}$, hence
$\D = t_1^2 - 4q_1 = -3q_1.$
Since $\gcd(q_1,q_2) = 1$ by Lemma~\ref{lem:diffchar}, then $\gcd(\D,q_2) \in \{1,3\}$.

Let us assume by contradiction that $p_2=3$, then by Remark~\ref{rmk:sumtraces} we get
\[ 2 - t_1 = t_2 = 3^{a_2} + 1 - p_1^{a_1}, \quad
\mbox{which implies} \quad
p_1^{a_1} \mp p_1^{\frac{a_1}{2}} + 1 = 3^{a_2}. \]
If $a_2 = 1$, then $q_2 = 3$ and the only odd prime powers inside its Hasse interval are $5$ and $7$, both with odd exponents, contradicting the parity of $a_1$.
If $a_2 > 1$, then $x = -p_1^{\frac{a_1}{2}}$ should be a root of one of the polynomials
\[ x^2 \pm x + 1 \in \Z/9\Z[x],\]
which is impossible since those polynomials are both irreducible. %for every integer $X \in \Z$ we have
%\[ (X^2 \pm X + 1) \bmod 9 \in \{1,3,4,7\}. \]
Hence, we conclude that $p_2 \neq 3$, which implies $\gcd(\D,q_2) = 1$, then $\E_2$ is ordinary by Lemma~\ref{lem:TypeSqr}.

Let us now consider the case $p_1 = 3$. 
Since $t_1$ is odd, by Theorem~\ref{thm:Waterhouse} we have
\[t_1 = \begin{cases}
    \pm 3^{\frac{a_1}{2}} & \textnormal{if } a_1 \textnormal{ is even, \quad case~\eqref{W:odd1}}, \\
    \pm 3^{\frac{a_1+1}{2}} & \textnormal{if } a_1 \textnormal{ is odd, \ \quad case~\eqref{W:odd2}},
\end{cases}\]
which leads us to
\[ \D = \begin{cases}
    (3^{\frac{a_1}{2}})^2-4\cdot 3^{a_1} = -3^{a_1+1} & \textnormal{if } a_1 \textnormal{ is even,} \\
    (3^{\frac{a_1+1}{2}})^2-4\cdot 3^{a_1} = -3^{a_1} & \textnormal{if } a_1 \textnormal{ is odd.}
\end{cases}\]
By Lemma~\ref{lem:diffchar} we have $\gcd(q_1,q_2) = 1$, therefore $3 \nmid q_2$. Hence, $\D$ and $q_2$ cannot have common factors and we conclude that $\E_2$ is ordinary by Lemma~\ref{lem:TypeSqr}.

Thus, for odd Hasse pairs the sets $\E_i$ cannot be both supersingular, then if $\E_1 \cup \E_2$ is not empty, then it contains an ordinary elliptic curve.
%contains a supersingular curve, then one among $\E_1$ and $\E_2$ is supersingular, and the other one is ordinary by part.
\endproof

Next, we show that in odd characteristics, establishing the existence of supersingular curves in $\E_1 \cup \E_2$ only requires testing curves with $j$-invariant $0$.

\begin{prop} \label{prop:j=0}
Let $\qq$ be an odd Hasse pair. If $E_1 \in \E_1$ is supersingular, then $j(E_1)= 0$. Furthermore, in this case $\E_2$ is ordinary and it also contains an elliptic curve with $j$-invariant $0$.
\end{prop}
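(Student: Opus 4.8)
The plan is to exploit Proposition~\ref{prop:j=0}'s predecessor, Proposition~\ref{prop:ss}, which already establishes that $\E_2$ is ordinary; so the core task is twofold: first, show that a supersingular $E_1 \in \E_1$ must have $j$-invariant $0$, and second, produce an ordinary curve in $\E_2$ with $j$-invariant $0$. For the first part, I would split on whether $p_1 = 3$ or $p_1 \neq 3$, mirroring the case analysis in Proposition~\ref{prop:ss}. Since both $q_i$ are odd, $t_1$ is odd, so the only supersingular possibilities from Theorem~\ref{thm:Waterhouse} are cases~\eqref{W:odd1} (when $p_1 \neq 3$, forcing $t_1 = \pm\sqrt{q_1}$ with $a_1$ even and $p_1 \not\equiv 1 \bmod 3$) and~\eqref{W:odd2} (when $p_1 = 3$, $a_1$ odd). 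Both of these are precisely the cases treated in Proposition~\ref{prop:zero1728}, which tells us the curves have $j$-invariant $0$. So the first claim follows directly by invoking Proposition~\ref{prop:zero1728}.

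For the second part, I would use the discriminant computations already recorded in the proof of Proposition~\ref{prop:ss}: in every supersingular subcase we found $\D = -3 q_1$ or $\D = -3^{a_1+1}$ or $\D = -3^{a_1}$, and in each situation $\D$ is (up to the square factor coming from $q_1$) of the form $-3 \cdot (\text{square})$. The key observation is that the maximal order associated to these discriminants is the ring of integers $\Z[\zeta_3]$ of $\Q(\sqrt{-3})$, which is the endomorphism ring realized by curves of $j$-invariant $0$. Since $\E_2$ is ordinary and its curves have CM-discriminant dividing $\D = t_2^2 - 4 q_2$ (recall $\D$ is the same for both sides by Lemma~\ref{lem:sameD}), the quadratic field $\Q(\sqrt{\D}) = \Q(\sqrt{-3})$ is forced, and I would argue that the curve with $j$-invariant $0$ over $\F_{q_2}$ indeed lies in $\E_2$, i.e.\ it has the correct trace $t_2$.

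The main obstacle I expect is this last existence claim: knowing that $\E_2$ is ordinary with discriminant in $\Q(\sqrt{-3})$ does not by itself guarantee that the specific curve of $j$-invariant $0$ has trace exactly $t_2$ (a priori it could have trace $2 - t_1$ realized only up to sign, or the twist with $j=0$ could carry a different trace among the six sextic twists). To handle this, I would pin down the trace of the $j=0$ curve directly. Since $E_1$ has $j(E_1) = 0$ and $q_1, q_2$ both lie in $\Q(\sqrt{-3})$, I would use the explicit relation between the curves over $\F_{q_1}$ and $\F_{q_2}$: the Hasse pair condition together with $\D = t_1^2 - 4q_1 = t_2^2 - 4q_2$ means the Frobenius eigenvalues on both sides are associated to the same element of $\Z[\zeta_3]$. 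Concretely, writing $q_i = \pi_i \bar\pi_i$ for a Weil number $\pi_i$ with $\pi_i + \bar\pi_i = t_i$, the shared discriminant forces $t_1$ and $t_2$ to be the real traces of conjugate pieces of a single CM picture; among the sextic twists of the $j=0$ curve over $\F_{q_2}$, the admissible traces are exactly $\{0, \pm\sqrt{q_2}, \pm 2\sqrt{q_2}\}$ in the supersingular range and, in the ordinary range, the values $\pm$ of the two representations of $q_2$ by the principal form of discriminant $-3$; I would verify that $t_2 = 2 - t_1$ is among these, using $\D = -3 \cdot (\text{square})$ to show $4 q_2 - t_2^2 = -\D = 3 \cdot (\text{square})$, which is exactly the condition for $j = 0$ to be achievable with trace $t_2$. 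Once the trace is matched, Remark~\ref{rmk:uniquej} identifies this curve uniquely inside the ordinary set $\E_2$, completing the argument.
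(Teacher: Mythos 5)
Your proof is correct in substance, and the first half (odd $t_1$ forces cases~\eqref{W:odd1} or~\eqref{W:odd2} of Theorem~\ref{thm:Waterhouse}, hence $j(E_1)=0$ by Proposition~\ref{prop:zero1728}, and $\E_2$ is ordinary by Proposition~\ref{prop:ss}) is exactly the paper's argument. Where you diverge is the existence of a $j=0$ curve in $\E_2$: the paper dispatches this in one line by citing Waterhouse's Theorem~4.2(2), which guarantees that the \emph{maximal} order occurs as the endomorphism ring of some curve in any ordinary isogeny class; since the discriminant computations give $\D = -3p_1^{a_1}$ ($a_1$ even) or $\D=-3^{a_1}$ ($a_1$ odd), that maximal order is $\Z[\zeta_3]$, whose curves have $j$-invariant $0$, and the trace-matching worry you raise simply never arises because Waterhouse's theorem already works within the fixed isogeny class of trace $t_2$. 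Your alternative route --- pinning down $t_2$ among the sextic twists of a $j=0$ curve via the representation $4q_2 - t_2^2 = 3\cdot(\text{square})$ --- does work, but the step you label ``exactly the condition for $j=0$ to be achievable with trace $t_2$'' is the crux and needs justification: you must argue that since $\Z[\zeta_3]$ is a PID with unit group of order six, every primitive element of norm $q_2$ (such as $\pi = (t_2 + m\sqrt{-3})/2$, which is integral because $t_2\equiv m \bmod 2$, and ordinary because $p_2\nmid m$) lies in a single orbit under units and conjugation of the Frobenius of any one ordinary $j=0$ curve, so its trace is realized by one of the six twists; you should also note that $p_2\nmid m$ gives $-3\equiv (t_2/m)^2 \bmod p_2$, hence $p_2\equiv 1 \bmod 3$ and $j=0$ is indeed ordinary over $\F_{q_2}$. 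In short, your approach is a more hands-on, self-contained substitute for the paper's appeal to Waterhouse~4.2(2): it buys explicitness (it actually exhibits which twist has trace $t_2$) at the cost of the class-number-one twist analysis, whereas the paper's citation makes the existence step immediate.
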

\proof Since $t_1$ is odd, we are in cases \eqref{W:odd1} or \eqref{W:odd2} of Theorem~\ref{thm:Waterhouse}, thus by Proposition~\ref{prop:zero1728} we have $j(E_1) = 0$.
Moreover, by Proposition~\ref{prop:ss}, $\E_2$ is ordinary and, by \cite[Theorem 4.2 (2)]{Waterhouse}, the maximal order always arises as the endomorphism ring of some curve in $\E_2$. We have
\[t_1 = \begin{cases}
    \pm p_1^{\frac{a_1}{2}} & \textnormal{if } a_1 \textnormal{ is even,} \\
    \pm 3^{\frac{a_1+1}{2}} & \textnormal{if } a_1 \textnormal{ is odd,}
\end{cases} \implies \D = \begin{cases}
    -3 p_1^{a_1} & \textnormal{if } a_1 \textnormal{ is even,} \\
    -3^{a_1} & \textnormal{if } a_1 \textnormal{ is odd.}
\end{cases}\]
We observe that the maximal order in both cases is the ring of integers of $\Q(\sqrt{-3})$, which is always isomorphic to the endomorphism ring of elliptic curves of $j$-invariant $0$.
\endproof

\subsection{Even characteristics}
In this section, we discuss Hasse pairs involving even prime powers, so we will assume $p_1 = 2$. 

\begin{lemma} \label{lem:ord-ord2}
Let $(2^{a_1},q_2)$ be a Hasse pair. The following are equivalent:
\begin{enumerate}
    \item\label{lem:ord-ord2-i} $\E_1$ is ordinary,
    \item\label{lem:ord-ord2-ii} $q_2$ is even,
    \item\label{lem:ord-ord2-iii} $(2^{a_1},q_2) \in \{ ((2,4)), ((4,8)) \}$.
\end{enumerate}
\end{lemma}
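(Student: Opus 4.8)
The plan is to prove the two equivalences $(\ref{lem:ord-ord2-i})\Leftrightarrow(\ref{lem:ord-ord2-ii})$ and $(\ref{lem:ord-ord2-ii})\Leftrightarrow(\ref{lem:ord-ord2-iii})$ separately, handling the first by a parity argument and the second by an elementary estimate on how far apart powers of two can be. For $(\ref{lem:ord-ord2-i})\Leftrightarrow(\ref{lem:ord-ord2-ii})$ I would invoke Lemma~\ref{lem:TypeSqr}: since $q_1 = 2^{a_1}$, the set $\E_1$ is ordinary exactly when $\gcd(\D,q_1)=1$ (if this gcd were different from $1$, the second bullet of that lemma would force $\E_1$ to be supersingular or empty, hence not ordinary). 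As observed in the proof of Lemma~\ref{lem:TypeSqr}, $\gcd(\D,q_1)=1$ is equivalent to $\gcd(t_1,q_1)=1$, which for a power of two simply means that $t_1$ is odd. Now $t_1 = 2^{a_1}+1-q_2$ with $2^{a_1}$ even (as $a_1 \ge 1$), so $t_1$ is odd precisely when $q_2$ is even, settling $(\ref{lem:ord-ord2-i})\Leftrightarrow(\ref{lem:ord-ord2-ii})$.

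For $(\ref{lem:ord-ord2-ii})\Rightarrow(\ref{lem:ord-ord2-iii})$, if $q_2$ is even then, being a prime power, it is itself a power of two, say $q_2 = 2^{a_2}$, and distinctness of the pair forces $a_2 \ne a_1$. Setting $m=\min(a_1,a_2)$ and $M=\max(a_1,a_2)$, so that $M > m \ge 1$, the Hasse condition of Definition~\ref{defn:HassePair} becomes
\[ |\sqrt{q_1}-\sqrt{q_2}| = 2^{m/2}\bigl(2^{(M-m)/2}-1\bigr) \le 1. \]
I would first rule out $M-m \ge 2$: in that case $2^{(M-m)/2}-1 \ge 1$, so the left-hand side is at least $2^{m/2} \ge \sqrt{2} > 1$, a contradiction. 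Hence $M = m+1$, and substituting gives $2^{m/2}(\sqrt2-1) \le 1$, i.e. $2^{m/2} \le (\sqrt2-1)^{-1} = \sqrt2+1 \approx 2.414$, which forces $m \in \{1,2\}$. The two resulting unordered pairs are $\{2,4\}$ and $\{4,8\}$, which is exactly $(\ref{lem:ord-ord2-iii})$. The converse $(\ref{lem:ord-ord2-iii})\Rightarrow(\ref{lem:ord-ord2-ii})$ is immediate by inspection: in each admissible ordering the entry $q_2$ is a power of two, hence even, and one checks directly that $\sqrt4-\sqrt2$ and $\sqrt8-\sqrt4$ both lie below $1$, so these are genuine Hasse pairs.

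The only step carrying any real content is $(\ref{lem:ord-ord2-ii})\Rightarrow(\ref{lem:ord-ord2-iii})$, and even there the crux is the elementary observation that two distinct powers of two whose exponents differ by at least $2$ are already more than Hasse-distance $1$ apart; once this reduces us to consecutive exponents, a single inequality pins down $m\in\{1,2\}$. I do not anticipate a genuine obstacle, since the remaining implications are pure parity bookkeeping built on Lemma~\ref{lem:TypeSqr}.
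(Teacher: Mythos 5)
Your proof is correct and follows essentially the same route as the paper's: both rest on Lemma~\ref{lem:TypeSqr} together with the parity of the trace $t_1 = 2^{a_1}+1-q_2$ for the equivalence of \eqref{lem:ord-ord2-i} and \eqref{lem:ord-ord2-ii}, and on enumerating the powers of two lying within Hasse distance of each other for \eqref{lem:ord-ord2-iii}. The only differences are organizational rather than substantive: you prove two biconditionals where the paper runs the cycle $\eqref{lem:ord-ord2-i}\Rightarrow\eqref{lem:ord-ord2-ii}\Rightarrow\eqref{lem:ord-ord2-iii}\Rightarrow\eqref{lem:ord-ord2-i}$ (closing it by computing $\D=-7$), and you spell out the inequality $2^{m/2}\bigl(2^{(M-m)/2}-1\bigr)\le 1$ behind what the paper dismisses as a ``straightforward check.''
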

\proof \eqref{lem:ord-ord2-i} $\Rightarrow$ \eqref{lem:ord-ord2-ii}: Since $\E_1$ is ordinary, then $t_1$ is odd, so is $t_2$ (Remark \ref {rmk:sumtraces}). Therefore $q_2 = t_2-1+2^{a_1}$ is even.

$\eqref{lem:ord-ord2-ii} \Rightarrow \eqref{lem:ord-ord2-iii}$: A straightforward check shows that there are only two powers of $2$ lying in the Hasse bound of each other, namely $\{2,4\}$ and $\{4,8\}$.

$\eqref{lem:ord-ord2-iii} \Rightarrow \eqref{lem:ord-ord2-i}$: Every $(2^{a_1},q_2) \in \{ ((2,4)), ((4,8)) \}$ has discriminant $\D = -7$, which is coprime to $2^{a_1}$. Thus, $\E_1$ is ordinary by Lemma \ref{lem:TypeSqr}.
\endproof

The ordinary cases of Lemma \ref{lem:ord-ord2} are detailed in Examples \ref{ex:2-4} and \ref{ex:4-8}.
Except for such cases, if $\gcd(\D,q_1q_2) \neq 1$ we can assume that $q_2$ is odd and $\E_1$ is supersingular or empty.
In the supersingular case, we know from Deuring \cite[\S 8.2]{Deuring} that the only possible curves in $\E_1$ have $j$-invariant $0$, similarly to the odd case (Proposition \ref{prop:j=0}).

%\begin{lemma} \label{lem:ord-ord2}
%Let $(2^{a_1},q_2)$ be a Hasse pair.
%If $\E_1$ is ordinary, then $(2^{a_1},q_2) \in \{ ((2,4)), ((4,8)) \}$.
%In particular, $\E_2$ is also ordinary.
%\end{lemma}
%\proof Since the curves in $\E_1$ are ordinary, we have
%\[ 1 = \gcd(t_1,q_1) = \gcd(2^{a_1}+1-q_2,2^{a_1}), \]
%which implies that $q_2$ is even.
%A straightforward check shows that there are only two powers of $2$ lying in the Hasse bound of each other, namely $\{2,4\}$ and $\{4,8\}$.
%In such cases, we have $\gcd(\D,q_2) = 1$ by Lemma~\ref{lem:diffchar}, so $\E_2$ contains ordinary curves by Lemma~\ref{lem:TypeSqr}.
%\endproof

However, with an even characteristic one may simultaneously find supersingular curves in both $\E_1$ and $\E_2$.
To determine when this is the case we need the following lemma. It is an easy instance of the theorem of Mih\u{a}ilescu \cite{CatalanConj}, which proved the famous Catalan's conjecture, but the version we need can be proved with elementary techniques.

\begin{lemma}[{\cite[pp. 201-203]{EleniRef}}] \label{lem:consecutiveprimepowers}
Let $\a,\b \geq 2$ be integers and let $p \in \N$ be a prime.
If $2^\a$ and $p^\b$ are consecutive integers, then $\a = 3, \b = 2$ and $p = 3$.
\end{lemma}

\begin{thm} \label{thm:even-ss}
 Let $\qq = (2^{a_1},p_2^{a_2})$ be a Hasse pair.
If $\E_1$ is supersingular of trace $t_1$, then precisely one of the following holds.
\begin{enumerate}
    \item\label{ss-ss-i} $t_1 = \pm 2^{\frac{a_1+2}{2}}$ and 
    \[ \qq = \begin{cases}
        (2^6,3^4), &\textnormal{ or } \\
        \big( 2^{2^{n+1}},(2^{2^{n}}+1)^2 \big), &\textnormal{ with } p_2 = 2^{2^{n}}+1 \textnormal{ a Fermat prime, or } \\
        \big( 2^{2m},(2^m-1)^2 \big), &\textnormal{ with } p_2 = 2^m-1 \textnormal{ a Mersenne prime. }
    \end{cases} \]
    \item\label{ss-ss-ii} $t_1 = 2$ and $\qq = (2^2,3)$.
    \item\label{ss-ss-iii} $t_1 = 0$, or $a_1$ is odd and $t_1 = \pm 2^{\frac{a_1+1}{2}}$, or $a_1$ is even, $t_1 = \pm 2^{\frac{a_1}{2}}$ and $p_2 \neq 3$.
\end{enumerate}
    In cases \eqref{ss-ss-i} and \eqref{ss-ss-ii}, $\E_2$ is supersingular, while in case \eqref{ss-ss-iii}, $\E_2$ is ordinary.
\end{thm}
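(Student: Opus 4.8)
The plan is to run through Waterhouse's classification to pin down the finitely many shapes the supersingular trace $t_1$ can take, then to use Lemma~\ref{lem:sameD} together with Lemma~\ref{lem:TypeSqr} to decide the type of $\E_2$ from the single divisibility $p_2 \mid \D$, and finally to resolve the two borderline families by elementary Diophantine arguments. Since $\E_1$ is supersingular we have $2 \mid t_1$, so only the supersingular cases \eqref{W:sseven}--\eqref{W:0} of Theorem~\ref{thm:Waterhouse} with $p = 2$ can occur; these force
\[ t_1 \in \{\, \pm 2^{\frac{a_1+2}{2}},\ \pm 2^{\frac{a_1}{2}},\ \pm 2^{\frac{a_1+1}{2}},\ 0 \,\}, \]
with $a_1$ even in the first two shapes and $a_1$ odd in the third. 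Computing $\D = t_1^2 - 4q_1$ gives, in the same order, $\D \in \{\, 0,\ -3\cdot 2^{a_1},\ -2^{a_1+1},\ -2^{a_1+2} \,\}$. Because $p_2$ is odd, Lemma~\ref{lem:TypeSqr} shows immediately that $\E_2$ is ordinary whenever $\D \in \{-2^{a_1+1}, -2^{a_1+2}\}$ (that is, when $t_1 = 0$ or $a_1$ is odd), and also when $\D = -3\cdot 2^{a_1}$ with $p_2 \neq 3$; together these constitute item~\eqref{ss-ss-iii}. Thus only $\D = 0$, and $\D = -3\cdot 2^{a_1}$ with $p_2 = 3$, can yield a non-ordinary $\E_2$, and these are the two families to analyze.

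For $\D = 0$ I would observe from the definition of $t_1$ that $q_2 = q_1 + 1 - t_1 = (2^{a_1/2} \mp 1)^2$, so $q_2$ is a perfect square; hence $a_2$ is even and $p_2^{a_2/2} = 2^{a_1/2} \mp 1$, i.e.\ $2^{a_1/2}$ and $p_2^{a_2/2}$ are consecutive integers. If $a_2/2 \geq 2$ then $p_2^{a_2/2} \geq 9$, which via consecutiveness forces $2^{a_1/2} = 8$ and hence $a_1/2 \geq 2$ as well, so Lemma~\ref{lem:consecutiveprimepowers} applies and yields $\qq = (2^6, 3^4)$. Otherwise $a_2/2 = 1$ and $p_2 = 2^{a_1/2} \mp 1$ is prime: the value $2^{a_1/2} + 1$ is prime precisely when $a_1/2 = 2^n$ is a power of $2$, giving the Fermat prime $p_2 = 2^{2^n}+1$, while $2^{a_1/2} - 1$ prime gives the Mersenne prime $p_2 = 2^m - 1$ with $m = a_1/2$; rewriting $q_1 = 2^{2\cdot(a_1/2)}$ produces exactly the three shapes of item~\eqref{ss-ss-i}. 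In each of them $\D = 0$ forces $t_2^2 - 4q_2 = 0$ by Lemma~\ref{lem:sameD}, so $t_2 = \pm 2\sqrt{q_2}$ with $a_2$ even, and $\E_2$ is supersingular (and non-empty) by case~\eqref{W:sseven} of Theorem~\ref{thm:Waterhouse}.

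For $\D = -3\cdot 2^{a_1}$ with $p_2 = 3$ I would use that $\E_1$ containing a curve of trace $t_1 = \pm 2^{a_1/2}$ means $q_2 = 3^{a_2} = q_1 + 1 - t_1 = 2^{a_1} \mp 2^{a_1/2} + 1$. Setting $w = 2^{a_1/2}$ this reads $w^2 \mp w + 1 = 3^{a_2}$. If $a_2 \geq 2$ the left-hand side must vanish modulo $9$, but a check of the six residues of a power of $2$ modulo $9$ shows $w^2 \mp w + 1 \not\equiv 0 \pmod{9}$ in every case; hence $a_2 = 1$, and then $w^2 - w + 1 = 3$ gives $w = 2$ (the $+w$ equation has no solution with $w \geq 2$). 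This yields $\qq = (2^2, 3)$ with $t_1 = 2$, namely item~\eqref{ss-ss-ii}; here $t_2 = q_2 + 1 - q_1 = 0$ with $a_2$ odd, so $\E_2$ is supersingular by case~\eqref{W:0}.

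It then remains to note that these outcomes are mutually exclusive and exhaustive: Theorem~\ref{thm:Waterhouse} lists all supersingular traces, the four trace shapes (with their parity constraints on $a_1$) are pairwise distinct, and within the shape $\pm 2^{a_1/2}$ the dichotomy $p_2 = 3$ versus $p_2 \neq 3$ separates \eqref{ss-ss-ii} from \eqref{ss-ss-iii}; moreover $t_1 = 2 = 2^{a_1/2}$ occurs only for $a_1 = 2$ and can never equal $\pm 2^{(a_1+2)/2}$. I expect the main obstacle to be the $\D = 0$ family: one must ensure that the small-exponent boundary cases ($a_1/2 = 1$ or $a_2/2 = 1$) are neither double-counted nor omitted when passing between Lemma~\ref{lem:consecutiveprimepowers} and the Fermat/Mersenne parametrizations. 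A useful consistency check is that every resulting pair is genuinely Hasse, since $|\sqrt{q_1} - \sqrt{q_2}| = |2^{a_1/2} - p_2^{a_2/2}| = 1$.
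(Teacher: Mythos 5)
Your proof is correct and follows essentially the same route as the paper's: enumerate the supersingular trace shapes from Theorem~\ref{thm:Waterhouse}, compute $\D = t_1^2 - 4q_1$ in each case, dispatch item~\eqref{ss-ss-iii} via Lemma~\ref{lem:TypeSqr}, reduce $\D = 0$ to consecutive prime powers handled by Lemma~\ref{lem:consecutiveprimepowers} together with the Fermat/Mersenne parametrization, and eliminate $p_2 = 3$, $a_2 \geq 2$ by a computation modulo $9$ (the paper phrases this as $-3$ not being a quadratic residue modulo $9$; your residue check on $w^2 \mp w + 1$ over the six residues of powers of $2$ is an equivalent form of the same argument, and matches the one used in Proposition~\ref{prop:ss}).

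Two small remarks. First, your phrase ``forces $2^{a_1/2} = 8$'' should read ``forces $2^{a_1/2} \geq 8$'': the equality is the \emph{conclusion} of Lemma~\ref{lem:consecutiveprimepowers}, not a consequence of consecutiveness alone; and the assertion that $p_2$ is odd deserves its one-line justification (since $2 \mid t_1$, the count $q_2 = q_1 + 1 - t_1$ is odd; the paper instead cites Lemma~\ref{lem:ord-ord2}). Neither affects the validity of the argument. Second, your dispatch of the small-exponent cases ($a_2/2 \geq 2$ versus $a_2/2 = 1$, with $a_1/2 = 1$ absorbed into the latter) is in fact more careful than the paper's own at one corner: the paper asserts that there are no Hasse pairs $(4, p_2^{a_2})$ with $a_2$ even, but $(4,9)$ is such a pair, since $|\sqrt{4} - \sqrt{9}| = 1$, and it has $t_1 = -4 = -2\sqrt{q_1}$ supersingular; it is precisely the Fermat case $n = 0$ of item~\eqref{ss-ss-i}. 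Your $a_2/2 = 1$ branch captures it correctly (as does the paper's $a_2 = 2$ branch, which overlaps the erroneous $a_1 = 2$ sentence and renders it harmless for the final classification).
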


\proof Since $\E_1$ is supersingular, then $p_2$ is odd by Lemma \ref{lem:ord-ord2} and $t_1$ is given by one of the cases \eqref{W:sseven}-\eqref{W:0} of Theorem~\ref{thm:Waterhouse}.
We examine such cases separately.

Case \eqref{W:sseven}: $a_1$ is even and $t_1 = \pm 2^{\frac{a_1+2}{2}}$, thus $\D = t_1^2-4q_1 = 2^{a_1+2}-4 \cdot 2^{a_1} = 0$.
Since $\D = t_2^2-4p_2^{a_2}$ by Lemma~\ref{lem:sameD}, then $a_2$ is also even.
As $\qq$ is Hasse, this implies that $2^{\frac{a_1}{2}}$ and $p_2^{\frac{a_2}{2}}$ are consecutive integers.
By Lemma~\ref{lem:consecutiveprimepowers} if $\frac{a_1}{2},\frac{a_2}{2} \geq 2$ there is a unique possibility, namely $\qq = (2^6,3^4)$, corresponding to the consecutive integers $8$ and $9$.
Now consider the case where $a_1$ or $a_2$ is $2$.
If $a_1 = 2$, we straightforwardly verify that there are no Hasse pairs $(4,p_2^{a_2})$ with $a_2$ even.
If $a_2 = 2$, then $2^{\frac{a_1}{2}}$ and $p_2$ are consecutive integers, therefore $p_2$ is either a Fermat or a Mersenne prime.
It is easy to check that the corresponding traces $t_2$ fit into case \eqref{W:sseven} of Theorem~\ref{thm:Waterhouse}, hence in such cases $\E_2$ is supersingular.

Case \eqref{W:odd1}: $a_1$ is even and $t_1 = \pm 2^{\frac{a_1}{2}}$, then $\D = -3 \cdot 2^{a_1}$.
If $p_2 > 3$ then $\E_2$ is ordinary by Lemma~\ref{lem:TypeSqr}.
Let us consider $p_2 = 3$.
By Lemma~\ref{lem:sameD} we have
\[ -3 \cdot 2^{a_1} = t_2^2 - 4\cdot 3^{a_2}. \]
Since $-3$ is not a quadratic residue modulo $9$, the above equation may have integer solutions only if $a_2 = 1$, hence $q_2 = 3$. We straightforwardly check that the unique Hasse pair $(2^{a_1},3)$ with $a_1$ even is $(2^2,3)$, which gives $t_1 = 2, t_2 = 0$ and therefore $\E_2$ supersingular.

Cases \eqref{W:odd2} and \eqref{W:0}: If $a_1$ is odd and $t_1 = \pm 2^{\frac{a_1+1}{2}}$, or if $t_1 = 0$, then $\D$ is a power of $2$, hence $\E_2$ is ordinary by Lemma~\ref{lem:TypeSqr}.
\endproof

\begin{remark} \label{rmk:FermatMersenne}
    By Proposition \ref{prop:j=0} and Theorem \ref{thm:even-ss}, both sets $\E_i$ can be simultaneously supersingular only if one of the considered prime powers is even, and the other one involves primes of Fermat or Mersenne.
    On the other side, if the Lenstra-Pomerance-Wagstaff conjecture \cite{LPW} holds true, this would imply the existence of infinitely many Mersenne primes, hence by Theorem \ref{thm:even-ss} both sets $\E_i$ are supersingular infinitely many times.
\end{remark}

\begin{remark} When both sets $\E_i$ are supersingular, they are often unrelated.
For instance, for $\qq = (2^{10}, 31^2)$ the set $\E_1$ contains one curve of $j$-invariant $0 \in \F_{2^{10}}$, while $\E_2$ contains three curves of $j$-invariants $2,4,23 \in \F_{31^{2}}$.
Instead, in the next sections we will prove that this is never the case when both sets $\E_i$ are ordinary.
\end{remark}

\section{The splitting behavior of the primes in odd Hasse pairs} \label{split}

In this section, we consider odd Hasse pairs $\qq$, and therefore the traces $t_i$ are both odd integers. 
% We write $p^a || N$ when $p^a|N$ but $p^{a+1} \nmid N$. <- Moved to notation

\begin{lemma} \label{PropertiesD}
Let $\qq$ be an odd Hasse pair.
Then $\D < 0$, $\D \equiv 1 \bmod 4$ and $|\D|$ is not a perfect square.
\end{lemma}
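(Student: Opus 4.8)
The plan is to deduce all three assertions from the single structural fact that the traces are odd, combined with the Hasse bound already recorded in Lemma~\ref{lem:Hasse}. Indeed, since $q_1$ and $q_2$ are both odd, the integer $t_1 = q_1 + 1 - q_2$ is odd (as noted at the start of this section), and by Lemma~\ref{lem:sameD} we may write $\D = t_1^2 - 4q_1$. This representation is the only tool I expect to need.

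First I would establish the congruence $\D \equiv 1 \bmod 4$. The oddness of $t_1$ gives $t_1^2 \equiv 1 \bmod 4$, while $4q_1 \equiv 0 \bmod 4$; hence $\D = t_1^2 - 4q_1 \equiv 1 \bmod 4$. For the sign, I would invoke Lemma~\ref{lem:Hasse}, which gives $|t_1| \leq 2\sqrt{q_1}$, so that $t_1^2 \leq 4q_1$ and therefore $\D \leq 0$. The congruence just obtained forbids $\D = 0$, so in fact $\D < 0$.

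Finally, for the non-square claim, the idea is purely modular: since $\D < 0$ we have $|\D| = -\D$, and $\D \equiv 1 \bmod 4$ yields $|\D| \equiv -1 \equiv 3 \bmod 4$. As every perfect square is congruent to $0$ or $1$ modulo $4$, the value $|\D|$ cannot be a perfect square, completing the argument.

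I do not anticipate any genuine obstacle; the whole proof is a short congruence computation. The only points deserving care are that the Hasse-pair hypothesis must be fed in through Lemma~\ref{lem:Hasse} to secure $\D \leq 0$, and that the \emph{oddness} of the traces — the sole place where the ``odd'' part of the hypothesis is used — is precisely what forces $\D \equiv 1 \bmod 4$ and thereby delivers both $\D \neq 0$ and the non-square property at once.
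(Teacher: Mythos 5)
Your proof is correct and follows essentially the same route as the paper: both use Lemma~\ref{lem:sameD} to write $\D = t_1^2 - 4q_1$, the oddness of $t_1$ to get $\D \equiv 1 \bmod 4$, the Hasse bound of Lemma~\ref{lem:Hasse} for the sign, and the observation that $|\D| \equiv 3 \bmod 4$ cannot be a square. The only (cosmetic) difference is that the paper asserts the strict inequality $|t_1| < 2\sqrt{q_1}$ directly from oddness, whereas you obtain $\D \leq 0$ and then exclude $\D = 0$ via the congruence --- your handling of this edge case is, if anything, slightly more explicit.
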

\proof Since $\qq$ is Hasse and $t_1$ is odd, by Lemma~\ref{lem:Hasse} we have $t_1 < 2 \sqrt{q_1}$, hence by Lemma~\ref{lem:sameD} we conclude
\[ \D = t_1^2 - 4 q_1 < 0. \]
Since $t_1^2 \equiv 1 \bmod 4$, the above equation implies $\D \equiv 1 \bmod 4$, so $|\D| = -\D \equiv -1 \bmod 4$.
Therefore $|\D|$ cannot be a square, as it is not a quadratic residue modulo $4$.
\endproof

We can write the discriminant $\D$ associated with $\qq$ as 
\[ \D = f^{2}D, \]
where $f$ is called the \emph{conductor}, and $D$ is the squarefree part of $\D$.
By Lemma~\ref{PropertiesD}, we have $D \equiv 1\bmod 4$ as well, hence $D$ is a fundamental discriminant.
We denote by $K$ the imaginary quadratic field 
\[ K = \Q\big(\sqrt{\D}\big) = \Q\big(\sqrt{D}\big), \] 
and by $\O_{K}$ the maximal order of $K$.

\begin{remark} \label{rmk:no1728}
    When $\qq$ is an odd Hasse pair, we observe that we never have $K = \Q(i)$, since $\D \equiv 1 \bmod 4$. Therefore, in this case, the $j$-invariant $1728$ cannot correspond to an elliptic curve in either set $\E_i$.
\end{remark}

\begin{remark} \label{Norm}
By Lemma \ref{lem:sameD} we have $\D = t_i^{2} - 4q_i$, from which we have
\[ \label{RelnNorm} q_i = \frac{{t_i}^{2} - \D}{4} = \Norm_{K/\mathbb{Q}}\left(\frac{t_i + \sqrt{\D}}{2}\right). \]
It shows that $q_i$ is the norm from $K$ down to $\Q$, of the integral element 
\[ \pi_{i} = \frac{t_i + \sqrt{\D}}{2} \in \O_{K}. \]
Furthermore, by Remark \ref{rmk:sumtraces}, we obtain the relation \begin{equation} \label{eq:homothetic}
 \pi_i = 1 - \overline{\pi_j}, \qquad i \neq j. \end{equation}
\end{remark}

\begin{remark} \label{rmk:FrobsPrim}
Since $\D$ is not a square, the polynomial $f_{i}(x) = x^{2} - t_ix + q_i \in \Z[x]$ is irreducible for $i \in \{1,2\}$, and it is the minimal polynomial of the element $\pi_{i} \in \O_K$. % as defined in Remark~\ref{Norm}.
In particular, $\mathbb{Z}[\pi_{i}]$ are both suborders of $\O_{K}$.
\end{remark}

\begin{remark}\label{rmk:SameOrders}
    The discriminant $\delta_{i}$ of each order $\mathbb{Z}[\pi_{i}]$ can be easily computed as
    \[ \delta_{i} = \left(\det\begin{pmatrix}
1 & \pi_{i} \\
1 & \overline{\pi}_{i} 
\end{pmatrix}\right)^{2} = \D = f^{2}D. \]
Since %there is a unique order for each conductor \cite[Lemma 7.2]{Cox}, 
$\pi_1-\pi_2 = 1 - t_2$ by Remark \ref{rmk:sumtraces}, we also have $\mathbb{Z}[\pi_{1}] = \O_{\D} = \mathbb{Z}[\pi_{2}]$.
\end{remark}

\begin{prop} \label{PrimeSplits}
Let $\qq$ be an odd Hasse pair.
Then at least one of the base primes $p_i$ splits completely in $K$.
\end{prop}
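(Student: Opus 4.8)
We want to show that at least one of the base primes $p_i$ splits completely in $K = \Q(\sqrt{\D})$. The natural tool is the norm relation from Remark~\ref{Norm}, which exhibits each $q_i = p_i^{a_i}$ as the norm of the element $\pi_i = (t_i + \sqrt{\D})/2 \in \O_K$. Since $\O_{\D} = \Z[\pi_i]$ is the order of discriminant $\D$ (Remark~\ref{rmk:SameOrders}), factoring $p_i$ in $K$ is controlled by the factorization of the minimal polynomial $f_i(x) = x^2 - t_i x + q_i$ modulo $p_i$, equivalently by the Kronecker symbol $\left(\tfrac{D}{p_i}\right)$, provided $p_i$ does not divide the conductor $f$. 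The plan is to show that the splitting behavior is forced by the parity constraints on the traces $t_i$ together with the relation $t_1 + t_2 = 2$.

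\textbf{Key steps.}

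First I would translate ``$p_i$ splits completely in $K$'' into the statement that $\left(\tfrac{D}{p_i}\right) = 1$ (with $p_i \nmid f$), using that $D \equiv 1 \bmod 4$ is a fundamental discriminant (Lemma~\ref{PropertiesD}). Next, I would observe that $q_i = p_i^{a_i}$ is a norm from $\O_{\D}$, so $p_i^{a_i} = \Norm(\pi_i)$; the prime $p_i$ therefore either splits, ramifies, or is inert in $K$, and in the inert case it could only divide $\Norm(\pi_i)$ to an even power unless $a_i$ has the wrong parity. The core of the argument should be a parity/quadratic-residue computation: from $\D = t_i^2 - 4q_i$ we get $t_i^2 \equiv \D \bmod p_i$ whenever $p_i \mid q_i$, so $\D$ is a square modulo $p_i$, which is exactly the condition $\left(\tfrac{\D}{p_i}\right) \neq -1$. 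The delicate point is to upgrade this from ``not inert'' to ``split'' for at least one index $i$: ramification must be ruled out, and this is where I expect to use that $|\D|$ is not a perfect square (Lemma~\ref{PropertiesD}) together with the relation $\pi_1 = 1 - \overline{\pi_2}$ from equation~\eqref{eq:homothetic}. A prime that ramifies in both $\Z[\pi_1]$ and $\Z[\pi_2]$ simultaneously would force divisibility constraints linking $p_1$, $p_2$, and $\D$ that contradict either the coprimality $\gcd(q_1,q_2)=1$ (Lemma~\ref{lem:diffchar}) or the fact that $|\D|$ is squarefree up to the conductor.

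\textbf{Main obstacle.}

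The hardest part will be the ramified case. Since $p_i \mid q_i$ and $p_i \mid \D$ is possible, one cannot immediately conclude splitting from the residue computation alone — ramification ($\left(\tfrac{D}{p_i}\right) = 0$, i.e. $p_i \mid D$) must be excluded. I expect the resolution to exploit the symmetry between the two primes: if $p_1$ ramified in $K$ then $p_1 \mid D$, and combined with $p_1 \mid q_1$ and the norm relation, one would derive that $p_1^2 \mid \D$, contradicting that $p_1 \mid D$ with $D$ squarefree unless the conductor absorbs it; carrying this bookkeeping through for both $p_1$ and $p_2$ at once, and showing they cannot both fail to split, is the crux. The relation $t_1 + t_2 = 2$ (Remark~\ref{rmk:sumtraces}) should break the remaining symmetry, since it prevents both $t_1$ and $t_2$ from being divisible by their respective primes in a mutually consistent way, leaving at least one $p_i$ genuinely split.
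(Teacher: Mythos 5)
Your sketch has two genuine gaps, and they sit exactly at the points you flag as ``delicate.'' First, you conflate the discriminant $\D$ with the fundamental discriminant $D$: the congruence $\D \equiv t_i^2 \bmod p_i$ does hold for both $i$ (since $p_i \mid 4q_i$), but it only yields $\left(\frac{D}{p_i}\right) = 1$ when $p_i \nmid \D$. When $p_i \mid \D$, the congruence is vacuous, and since $\D = f^2 D$ the prime $p_i$ may divide the \emph{conductor} $f$ rather than $D$ — in which case $p_i$ can be genuinely inert in $K$. So your residue computation does not establish ``not inert''; this is precisely the caveat of Remark~\ref{rmk:split}, and Example~\ref{ex:notsplit} is a counterexample to that step: for the odd Hasse pair $(17^2, 307)$ one has $\D = -3\cdot 17^2$, $D = -3$, and $17$ is inert. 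The paper rules out the \emph{both-inert} configuration (not each individual inertness) by a different argument: inertness of $p_i$ forces $(\pi_i) = (p_i^{b_i})$ in $\O_K$, whose norm gives $a_i = 2b_i$, so both $\sqrt{q_i} = p_i^{b_i}$ would be distinct odd integers, making $|\sqrt{q_1}-\sqrt{q_2}| \geq 2$ and violating the Hasse condition. You state the parity observation but never close the loop with the Hasse inequality, which is the actual contradiction.

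Second, your plan for the ramified case — derive $p_1^2 \mid \D$ and contradict squarefreeness of $D$ — cannot work, because the conductor absorbs even powers ($\D = f^2D$), and more fundamentally because ramification \emph{really occurs}: for the odd Hasse pair $(3,7)$ one has $\D = -3$ and $3$ ramifies (Example~\ref{ex:3-7bis}), so no correct proof can exclude it. The paper's resolution is not to exclude ramification but to show it forces the \emph{other} prime to split, via an input your sketch lacks entirely: if $p_1$ ramifies and $a_1$ is odd, then $p_1 \mid D$ forces the $p_1$-adic valuation of $\D$ to be odd, and $\D = t_1^2 - 4q_1$ then forces $t_1 = k_1 p_1^{b_1}$ with $\gcd(k_1,p_1)=1$ and $2b_1 > a_1$; combined with $\D < 0$ (so $t_1^2 < 4q_1$) this pins down $p_1 = 3$ and $|t_1| = 3^{\frac{a_1+1}{2}}$, i.e.\ case~\eqref{W:odd2} of Theorem~\ref{thm:Waterhouse}, so $\E_1$ is supersingular. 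Proposition~\ref{prop:ss} then gives $\gcd(\D, p_2) = 1$, whence $\D \equiv t_2^2 \bmod p_2$ is a \emph{nonzero} square and $p_2$ splits; the even-exponent subcase reduces to this one by parity, since both $a_i$ even is impossible for an odd Hasse pair. Your proposed symmetry-breaker $t_1 + t_2 = 2$ plays no role in this mechanism and cannot substitute for the Waterhouse/supersingular input, which is the missing idea.
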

\proof We equivalently show that in $\O_{K}$: (a) the $p_i$'s cannot be both inert, and (b) if $p_i$ ramifies, then $p_j$ splits for $j \neq i$.

Case (a): The fact that $\Norm_{K/\mathbb{Q}}(\pi_{i}) = q_i = p_i^{a_{i}}$ implies that $p_i$ is the unique prime of $\O_K$ that divides $\pi_{i}$, since $\O_{K}$ is Dedekind and we have unique factorization of ideals.
Thus, for both $i\in\{1, 2\}$ there are positive integer $b_i \in \Z_{>0}$ such that
\begin{equation*}
    %\label{EqId}
    (\pi_{i}) = (p_i^{b_{i}}).
\end{equation*}
If both primes $p_i$ were inert in $K$, by taking norms in the above equation, we would have $p_i^{a_i} = q_i = \Norm_{K/\mathbb{Q}}(\pi_{i}) = p_i^{2b_{i}}$.
This in turn gives us that $a_i = 2b_i$ should be both even.
Since $q_1 \neq q_2$, it would imply that
\[ |\sqrt{q_1} - \sqrt{q_2}| = |p_1^{b_{1}} - p_2^{b_{2}}| > 1, \]
contradicting the fact that $\qq$ is Hasse.

Case (b): Assume without loss of generality that $p_1$ ramifies in $K$.
From Case~(a) we know that only one of the exponents $a_i$ can be even.

(i) Let us first assume that $a_1$ is odd.
Since $p_1$ ramifies in $\O_K$, this implies that $p_1$ should divide the fundamental discriminant $D$ of $K$, and therefore the power of $p_1$ dividing $\D$ must be odd. We recall that in the case of odd Hasse pairs, the fundamental discriminant is not divisible by $4$ and is therefore squarefree.
From the following relation 
\begin{equation*}% \label{eq:disc}
    t_1^2 - 4q_1 = \D = f^2D,
\end{equation*}
it follows that $p_1 | t_1$.
Therefore, there exist $k_{1} \in \mathbb{Z}$ and $b_{1} \in \Z_{>0}$ such that $t_1 = k_{1}p_1^{b_{1}}$, with $\gcd(k_{1},p_1)=1$.
By considering the parity of the exponents of $p_i$ in the above equation, we have that $2b_1>a_1$. 
However, since $\D < 0$ and $p_1$ is odd, this case can only happen when $p_1 = 3$ and $|t_1| = 3^{\frac{a_1+1}{2}}$. 
This is case \eqref{W:odd2} of Theorem~\ref{thm:Waterhouse}, which implies that $\E_1$ is supersingular.
Proposition~\ref{prop:ss} now implies that $\text{gcd}(\D, p_2) = 1$, from which we conclude that $\D = t_2^{2} - 4q_2$ is a non-zero quadratic residue modulo $p_2$, and so is $D$ since $D \equiv (\frac{t_2}{f})^{2} \bmod p_2$. Therefore $p_2$ must split in $K$ by \cite[Proposition 5.16]{Cox}.

(ii) Assume now that $a_1$ is even, so $a_2$ is odd. Then, by Case~(a) $p_2$ cannot be inert. From Case (b-i) above, we see that $p_2$ cannot ramify either, because this would imply that $p_1$ splits, contradicting our assumption that $p_1$ ramifies. Hence, $p_2$ must split in $K$.
\endproof

\begin{remark} \label{rmk:split} 
We note that if $\E_i$ is ordinary, then $p_i$ splits because $\D \equiv t_i^{2} \bmod p_i$. The opposite, however, might not hold, since $p_i$ may split and divide the conductor $f$ of $\D$.
In particular, Proposition~\ref{PrimeSplits} alone does not guarantee the existence of an ordinary elliptic curve in $\E_1 \cup \E_2$.
\end{remark}

In the case of odd Hasse pairs $(p_1,p_2)$ with $p_1,p_2$ pure primes, one can conclude that both of them split completely in $K$ unless $(p_1,p_2) = ((3,7))$ (Example \ref{ex:3-7bis}).
However, this does not have to be true for prime powers with exponent $> 1$ (Example \ref{ex:notsplit}).
Thus, the splitting of one $p_i$ given by Proposition~\ref{PrimeSplits} is the best one can guarantee for Hasse pairs in general.

\begin{ex} \label{ex:3-7bis}
    Let us consider the Hasse pair $(3,7)$ as in Example \ref{ex:ord-ss}, whose discriminant is $\D = -3$.
    By Remarks \ref{rmk:pp} and \ref{rmk:split}, this is the unique odd Hasse pair consisting of pure primes that are not both guaranteed to split completely.
    In fact, $( 3 ) = ( \sqrt{-3} )^2 \subset \O_K$ is ramified, while $( 7 )$ splits because $\E_2$ is ordinary (Remark \ref{rmk:split}).
\end{ex}

\begin{ex} \label{ex:notsplit}
    Since $\sqrt{307} = 17.521\dots$, then $(17^2,307)$ is an odd Hasse pair, whose discriminant is $\D = -3 \cdot 17^2$.
    As $\E_2$ is ordinary by Lemma \ref{lem:TypeSqr}, then $( 307 )$ splits completely in $K$ (Remark \ref{rmk:split}).
    On the other hand, $( 17 )$ is prime (inert) in $K$.
\end{ex}

For the remainder of this section, we study under which conditions $\E_1 \cup \E_2$ may be empty.

\begin{defn} Let $\O$ be an order with fraction field $K$.
We say that an ideal $\mathfrak{a} \subset \O$ is \emph{proper} whenever $\O = \{ b \in K \ | \ b\mathfrak{a} = \mathfrak{a} \}$.
We recall that principal ideals are always proper \cite[p. 135]{Cox}.
A proper $\O$-ideal is called \emph{primitive} if it is not of the form $d\mathfrak{a}$ for some proper ideal $\mathfrak{a}$ and $d \in \Z_{>0}$.
We will call an element $\lambda \in \O$ primitive if $(\lambda)$ is a primitive ideal.  \end{defn}

\begin{prop} \label{NotEmpty}
Let $\qq$ be an odd Hasse pair with corresponding traces $t_i=k_{i}p_i^{b_{i}}$, where $b_{i} \in \N$ and $\gcd(k_i,p_i)=1$. Then one of the sets $\E_i$ is ordinary, except possibly when $b_{i} \geq 1$, $a_i > 2b_i$ and $p_i^{2b_{i}} || \D$ for both $i \in \{1, 2\}$. 
\end{prop}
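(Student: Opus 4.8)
The plan is to prove the contrapositive: assuming that \emph{neither} $\E_1$ nor $\E_2$ is ordinary, I will derive the three stated numerical conditions. The whole argument rests on the identification, coming from Lemma~\ref{lem:TypeSqr}, of when $\E_i$ is ordinary. Indeed, since $q_i=p_i^{a_i}$ we have the chain $\gcd(\D,q_i)=1 \iff \gcd(t_i,p_i)=1 \iff b_i=0$, so $\E_i$ is ordinary exactly when $b_i=0$. Hence ``neither $\E_i$ ordinary'' is precisely the assumption $b_1,b_2\geq 1$, which already gives the first condition. It will then remain to prove $a_i>2b_i$ for both $i$ (the second condition), after which the third follows immediately: since $p_i$ is odd, $v_{p_i}(\D)=v_{p_i}(t_i^2-4q_i)=\min(2b_i,a_i)$, which equals $2b_i$ once $a_i>2b_i$ is known, i.e. $p_i^{2b_i}\,||\,\D$.

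Before the main step I would first dispose of the degenerate possibility $t_i=0$, for which $b_i$ is not even defined. Here $\D=-4q_i=-4p_i^{a_i}$, and since $p_j$ is an odd prime distinct from $p_i$ (Lemma~\ref{lem:diffchar}), it divides neither $4$ nor $q_i$; thus $\gcd(\D,q_j)=1$ and $\E_j$ is ordinary by Lemma~\ref{lem:TypeSqr}, contradicting our assumption. So from now on $t_i\neq 0$ and $b_i\geq 1$ for both indices.

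For the core of the argument I would fix an index $i$ and exploit $\D<0$ (Lemma~\ref{PropertiesD}), rewritten as $k_i^2 p_i^{2b_i}<4p_i^{a_i}$, and run the trichotomy on $a_i$ versus $2b_i$. If $a_i<2b_i$, the inequality forces $p_i^{2b_i-a_i}<4$ with $p_i\geq 3$ and exponent $\geq 1$, hence $p_i=3$, $a_i=2b_i-1$, $k_i=\pm1$ and $\D=-3^{a_i}$; as $p_j\neq 3$ this makes $\gcd(\D,q_j)=1$, so $\E_j$ is ordinary, a contradiction. If $a_i=2b_i$, the inequality forces $k_i=\pm1$, so $t_i=\pm\sqrt{q_i}$ and $\D=-3q_i$. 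When $p_i=3$ this $\D$ is a power of $3$, so $p_j\neq 3$ again yields $\E_j$ ordinary. The remaining subcase $p_i\geq 5$ with $p_j=3$ is the one genuinely delicate point, and is where I expect the main obstacle to lie. I would resolve it by reading off $3$-adic valuations from $t_j^2=\D+4\cdot 3^{a_j}$ with $\D=-3p_i^{a_i}$: since $v_3(\D)=1$, if $a_j\geq 2$ then $v_3(t_j^2)=1$, which is impossible for a square; hence $a_j=1$, i.e. $q_j=3$, but then the Hasse condition $|\sqrt{q_i}-\sqrt{3}|\leq 1$ is violated by $q_i\geq 5^2$. So $p_j=3$ cannot occur, $\E_j$ is ordinary, and we reach a contradiction once more.

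Combining these, the assumption that both sets are non-ordinary forces $a_i>2b_i$ for each $i\in\{1,2\}$, establishing the second condition, and the third follows from the valuation remark of the first paragraph. The only truly subtle point, deserving the most care, is ruling out the coincidence $a_i=2b_i$ with the companion prime equal to $3$; everything else reduces to elementary size and valuation estimates controlled by $\D<0$ and the Hasse bound. As an alternative presentation, one could phrase the cases $a_i\leq 2b_i$ through Waterhouse's list (they are exactly cases~\eqref{W:odd1} and~\eqref{W:odd2}) and invoke Proposition~\ref{prop:ss} to conclude $\E_j$ is ordinary, but the direct discriminant computation seems cleaner since it treats the empty and supersingular possibilities for $\E_i$ uniformly.
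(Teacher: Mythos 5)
Your proof is correct, and it takes a genuinely different route from the paper's. The paper reduces the claim, via Proposition~\ref{prop:ss}, to showing that $\E_1\cup\E_2$ is non-empty whenever the exceptional conditions fail, and argues through the arithmetic of $\O_K$: its Step~1 uses primitivity of the ideals $(\pi_i)$ together with case (b-i) of Proposition~\ref{PrimeSplits} to handle $\gcd(t_i,\D)>1$, and its Step~2 extracts $k_i^2p_i^{2b_i-a_i}<4$ from the norm equation. You instead prove the contrapositive head-on: the observation that $\E_i$ is ordinary precisely when $b_i=0$ (Lemma~\ref{lem:TypeSqr}, whose ordinary case is automatically non-empty thanks to the Hasse bound and Waterhouse's case~\eqref{W:ordinary}) turns ``neither set ordinary'' into the purely numerical hypothesis $b_1,b_2\geq 1$, after which elementary $p$-adic valuation arguments driven by $\D<0$ rule out $a_i\leq 2b_i$ index by index. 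Your treatment of the delicate subcase $a_i=2b_i$, $p_i\geq 5$, $p_j=3$ --- where $v_3(t_j^2)=1$ is impossible for a square unless $a_j=1$, and $q_j=3$ then violates the Hasse inequality against $q_i\geq 25$ --- supplies directly the detail that the paper disposes of tersely in its Step~2(ii) and otherwise delegates to Proposition~\ref{prop:ss} (which rests on the irreducibility of $x^2\pm x+1$ modulo $9$). What the paper's route buys is structural information (it identifies which non-ordinary configurations are genuinely supersingular, reusing the splitting machinery of Section~\ref{split}); what yours buys is a shorter, self-contained argument that never touches ideals and, as you note, treats the empty and supersingular possibilities for $\E_i$ uniformly. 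Two cosmetic points: your degenerate case $t_i=0$ is vacuous, since for an odd Hasse pair both traces are odd; and the unqualified identity $v_{p_i}(\D)=\min(2b_i,a_i)$ is only guaranteed when $2b_i\neq a_i$, though you invoke it solely under $a_i>2b_i$, where it is valid.
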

\proof By Proposition~\ref{prop:ss} it is sufficient to show that if the above conditions are not satisfied, then $\E_1 \cup \E_2$ is non-empty. We show the result in steps.

\textbf{Step 1:} If either one of the $\pi_{i}$'s is primitive, then $\E_i$ is non-empty:

(i): If $\gcd(t_i,\D) = 1$, then $\E_i$ is ordinary by Lemma~\ref{lem:TypeSqr}.

(ii): If $\gcd(t_i,\D) > 1$, since $\pi_{i}$ is primitive then in particular $p_i$ cannot divide the conductor. This implies that $p_i$ must divide $D$ and hence ramifies.
From Case(b-i) of Proposition~\ref{PrimeSplits}, this can only happen when $p_i = 3$ and $|t_i| = 3^{\frac{a_i+1}{2}}$, which implies that $\E_i$ is supersingular. 

\textbf{Step 2:} If $2 b_i \geq a_i$, then $\E_1 \cup \E_2$ is non-empty:

We rewrite the norm equation
\[ t_i^{2} - 4q_i = p_i^{a_{i}} (k_{i}^{2}p_i^{2b_{i}-a_{i}} - 4) = \D < 0 \ \Rightarrow \ k_{i}^{2}p_i^{2b_{i}-a_{i}} < 4. \]
Then we must have $k_{i} = \pm 1$, and either 

(i) $2b_{i} > a_{i}$, in which case we should have $p_i = 3$ and $\E_i$ is supersingular by case \eqref{W:odd2} of Theorem~\ref{thm:Waterhouse}, or

(ii) $2b_{i} = a_{i}$, which implies that $2b_{j} > a_j$ and this is case (i) right above.

\textbf{Step 3:} Given Steps 1 and 2, $\E_1 \cup \E_2$ may be empty only if neither $\pi_{i}$ is primitive and $2 b_{i} < a_{i}$ for both $i \in \{1,2\}$. Given the norm equation, this again implies that $p_i^{2b_i} || \D$ for both $i \in \{1 , 2\}$.
\endproof

\begin{corollary}\label{cor:TwoPower}
If $a_i \leq 2$ for some $i \in \{1,2\}$, then at least one of the sets $\E_i$ is ordinary.
\end{corollary}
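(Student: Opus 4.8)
The plan is to deduce Corollary~\ref{cor:TwoPower} directly from Proposition~\ref{NotEmpty} by showing that the exceptional configuration described there cannot occur once some $a_i \leq 2$. Recall that Proposition~\ref{NotEmpty} guarantees that one of the sets $\E_i$ is ordinary \emph{unless} for \textbf{both} $i \in \{1,2\}$ we simultaneously have $b_i \geq 1$, $a_i > 2b_i$, and $p_i^{2b_i} \,||\, \D$. So it suffices to argue that the assumption $a_i \leq 2$ for at least one index breaks at least one of these conditions, thereby placing us outside the exceptional case and forcing an ordinary $\E_i$.

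First I would fix the index, say $i$, with $a_i \leq 2$, and examine the condition $a_i > 2b_i$ combined with $b_i \geq 1$. Since $b_i \geq 1$ forces $2b_i \geq 2$, the strict inequality $a_i > 2b_i$ would require $a_i \geq 3$, contradicting $a_i \leq 2$. Hence the two conditions $b_i \geq 1$ and $a_i > 2b_i$ are incompatible when $a_i \leq 2$: either $b_i = 0$ (so the exceptional case fails because it demands $b_i \geq 1$), or $b_i \geq 1$ but then $a_i > 2b_i \geq 2$ is impossible. In both sub-cases the exceptional hypotheses of Proposition~\ref{NotEmpty} cannot hold at the index $i$, so we are in the complementary situation where one of the $\E_i$'s is ordinary.

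Thus the argument is essentially a one-line parity/inequality observation feeding into the proposition, and I would write it as: suppose for contradiction that neither $\E_1$ nor $\E_2$ is ordinary; then by Proposition~\ref{NotEmpty} the exceptional conditions hold for both indices, in particular $b_i \geq 1$ and $a_i > 2b_i \geq 2$ for the chosen $i$, forcing $a_i \geq 3$ and contradicting $a_i \leq 2$. I do not anticipate a genuine obstacle here — the entire content has been front-loaded into Proposition~\ref{NotEmpty}, and the corollary is a clean specialization. The only point requiring a moment's care is making sure the inequality chain $a_i > 2b_i \geq 2 \Rightarrow a_i \geq 3$ is stated against integers (so that $a_i > 2$ indeed means $a_i \geq 3$), which is immediate since all exponents are positive integers.
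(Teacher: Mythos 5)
Your proof is correct and follows exactly the paper's own argument: both observe that $b_i \geq 1$ together with $a_i > 2b_i$ would force $a_i \geq 3$, so the exceptional configuration of Proposition~\ref{NotEmpty} cannot hold when some $a_i \leq 2$, and the proposition then yields an ordinary set. Your write-up merely spells out the integer inequality chain slightly more explicitly than the paper's one-line version.
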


\proof
If for some $i \in \{1,2\}$ we have $a_i \leq 2$, then we cannot have $a_i > 2 b_i$ and $b_i \geq 1$, hence we are not in the exceptional case of Proposition~\ref{NotEmpty}.
\endproof

A necessary condition for an odd Hasse pair to satisfy the exceptional conditions of Proposition~\ref{NotEmpty} is having positive integers $a_i > 2 b_i$ and odd integer primes $p_i$ such that 
\[ \begin{cases}
    p_1^{a_1} \equiv 1 \bmod p_2^{b_2}, \\
    p_2^{a_2} \equiv 1 \bmod p_1^{b_1}, \\
    \left| \sqrt{p_1^{a_1}}-\sqrt{p_2^{a_2}} \right| \leq 1. \\
\end{cases} \]

Even for $b_i = 1$, we have found no evidence of such values for moderately small prime powers ($p_i \leq 10^5$ and $a_i \leq 10^3$).
Moreover, large prime powers are expected to be asymptotically “very far apart” \cite{Pillai1,Pillai2}, therefore finding large solutions to the above inequality seems hard.

%%%%%%%%%%%%%%%%%%%%%%%%%%%%%%%%%%%%%%%%%%%%%%%%%%%%%%%%%%%%%%%%%%%%%%%%%%%%%%

\section{The isogeny graphs of ordinary Hasse pairs} \label{sec:ord-ord}

\subsection{The graph vertices: Sets of $j$-invariants}\label{Subs:Aux}

In this section, we consider the case where both sets $\E_i$ are ordinary and we will refer to the pair $\qq$ as an \emph{ordinary Hasse pair}.
In such cases, $\gcd(\D, q_1q_2) = 1$ and $\D$ is odd, therefore the fundamental discriminant $D$ is squarefree.

By Lemma~\ref{lem:ord-ord2} there are only two non-odd ordinary Hasse pairs, up to permutation, and it may be directly verified that their isogeny graphs (as will be defined in Section \ref{subsec:IsoGraphs}) are isomorphic (see Examples \ref{ex:2-4} and \ref{ex:4-8}).
Therefore, we can restrict to odd ordinary Hasse pairs and use our definitions and results from Section~\ref{split}. 

We recall from Remark~\ref{rmk:SameOrders} that the order $\mathbb{Z}[\pi_{1}] = \O_{\D} = \mathbb{Z}[\pi_{2}]$ is of discriminant $\D = f^{2}D$. %, for some conductor $f \in \mathbb{Z}_{>0}$ and squarefree fundamental discriminant $D$. 
%In particular, we can write for both $i \in \{1,2\}$ (as in \cite[Section 2.11]{SutherlandVolcanoes}):
%\[ \E_i = \bigsqcup_{\O_{\D} \se \O \se \O_K} \Ell_{\O}(\F_{q_i}) = \bigsqcup_{g | f} \Ell_{\O_{g^2D}}(\F_{q_i}). \]
For any positive divisor $g$ of $f$, there corresponds an order $\O_{g^{2}D}$, with discriminant $g^2D | \D$ and ideal class group $\Cl(\O_{g^{2}D})$ \cite[Lemma 7.2]{Cox}.
The cardinality $h_{g^{2}D} = |\Cl(\O_{g^{2}D})|$ of the ideal class group is known as the class number of $\O_{g^{2}D}$.
We denote by $H_{g^{2}D}(x)$ the Hilbert class polynomial of $\O_{g^{2}D}$ and by $L_{g^{2}D}$ the splitting field of $H_{g^{2}D}(x)$, also known as the ring class field of $\O_{g^{2}D}$. 
Given any set of representatives $\{\mathfrak{a}_{s}\}_{1 \leq s \leq h_{g^{2}D}}$ for $\Cl(\O_{g^{2}D})$, the Hilbert class polynomial $H_{g^{2}D}(x)$ is given by 
\[
H_{g^{2}D}(x) = \prod_{1 \leq s \leq h_{g^{2}D}} \big(x - j(\mathfrak{a}_{s})\big) \in \Z[x].
\]
We also have a group isomorphism $\Cl(\O_{g^{2}D}) \simeq \Gal(L_{g^{2}D}/K)$,
which can be explicitly described as follows \cite[Theorem 5, Chapt. 10, \S 3]{Lang}:
\begin{equation*} %\label{action}
\mathfrak{b} \mapsto \sigma_{\mathfrak{b}}, \quad \textnormal{where} \quad \sigma_{\mathfrak{b}}j(\mathfrak{a}) = j(\mathfrak{b}^{-1}\mathfrak{a}).
\end{equation*}
In particular, the ideal class group $\Cl(\O_{g^{2}D})$ acts on the set $\{j(\mathfrak{a}_{s})\}_{1 \leq s \leq h_{g^{2}D}}$ freely and transitively, and therefore  $L_{g^{2}D}=K\big(j(\mathfrak{a})\big)$ for any $\mathfrak{a} \in \Cl(\O_{g^{2}D})$.

\begin{lemma}\label{PrimitiveFrobs}
    Let $\qq$ be an ordinary Hasse pair. The ideals $(\pi_{i}) \subset \O_K$ 
    are proper, primitive, and $(\pi_{i}) = \mathfrak{p}_{i}^{a_{i}}$, where the $\mathfrak{p}_{i}$'s are prime ideals such that $\mathfrak{p}_{i}\overline{\mathfrak{p}}_{i} = (p_i)$ and $\mathfrak{p}_{i} \neq \overline{\mathfrak{p}}_{i}$.
\end{lemma}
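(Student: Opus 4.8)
The plan is to work throughout in the maximal order $\O_K$, which is a Dedekind domain where ideals factor uniquely into primes, and to combine two inputs: the norm relation $\Norm_{K/\Q}(\pi_i) = q_i = p_i^{a_i}$ from Remark~\ref{Norm}, and the ordinary hypothesis, which guarantees $\gcd(\D,q_i)=1$ and hence $\gcd(t_i,p_i)=1$. The first thing I would settle is the splitting of $p_i$, which already yields the last two assertions of the statement. Since $\E_i$ is ordinary, $p_i$ divides neither the conductor $f$ nor the fundamental discriminant $D$ (as $p_i \nmid \D = f^2D$), so $p_i$ is unramified; moreover $\D \equiv t_i^2 \bmod p_i$ is a \emph{nonzero} square modulo $p_i$, so by \cite[Proposition 5.16]{Cox} (cf.\ Remark~\ref{rmk:split}) the prime $p_i$ splits, i.e.\ $(p_i) = \mathfrak{p}_i \overline{\mathfrak{p}}_i$ with $\mathfrak{p}_i \neq \overline{\mathfrak{p}}_i$.

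Properness is immediate, since $(\pi_i)$ is principal and principal ideals are proper. For primitivity I would show that $\pi_i$ admits no rational integer divisor $d > 1$ in $\O_K$: writing $\pi_i = d\mu$ with $\mu \in \O_K$ and taking norms gives $d^2 \mid p_i^{a_i}$, so $d$ is a power of $p_i$, while the element $\pi_i + \overline{\pi_i} = t_i$ shows $d \mid t_i$. Together these would force $p_i \mid t_i$, contradicting the ordinary hypothesis $\gcd(t_i,p_i)=1$. Hence no such $d$ exists, so $(\pi_i)$ is not of the form $d\mathfrak{a}$ with $d>1$, i.e.\ it is primitive.

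Finally, for the factorization I would use that the ideal norm of $(\pi_i)$ equals $\bigl|\Norm_{K/\Q}(\pi_i)\bigr| = p_i^{a_i}$, so the only prime ideals of $\O_K$ that can divide $(\pi_i)$ are $\mathfrak{p}_i$ and $\overline{\mathfrak{p}}_i$; thus $(\pi_i) = \mathfrak{p}_i^{c}\,\overline{\mathfrak{p}}_i^{d}$ with $c + d = a_i$. If both $c \geq 1$ and $d \geq 1$, then $(p_i) = \mathfrak{p}_i\overline{\mathfrak{p}}_i$ would divide $(\pi_i)$, meaning $p_i \mid \pi_i$ in $\O_K$, which contradicts the primitivity just established. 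Hence one exponent vanishes, and after possibly interchanging $\mathfrak{p}_i$ with $\overline{\mathfrak{p}}_i$ we obtain $(\pi_i) = \mathfrak{p}_i^{a_i}$.

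The main obstacle — really the only step that is not bookkeeping about Dedekind factorization — is the primitivity claim, and the clean route to it is the norm-and-trace argument above, which is precisely where the ordinary condition $\gcd(t_i,p_i)=1$ enters. Once primitivity is in hand, the splitting of $p_i$ together with unique factorization of ideals in $\O_K$ forces all the remaining assertions essentially automatically.
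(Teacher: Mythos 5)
Your proof is correct and follows the paper's strategy in all essentials: ordinariness gives $p_i \nmid t_i$ and $p_i \nmid f$, the prime $p_i$ splits as $\mathfrak{p}_i\overline{\mathfrak{p}}_i$, the norm equation confines the factorization to $(\pi_i) = \mathfrak{p}_i^{c}\,\overline{\mathfrak{p}}_i^{d}$ with $c+d = a_i$, and a mixed factorization is excluded because it would give $p_i \mid \pi_i$, hence $p_i \mid t_i = \pi_i + \overline{\pi}_i$, contradicting $\gcd(t_i,p_i)=1$ --- this last contradiction is exactly the paper's closing step. There are two minor divergences worth noting. First, for the splitting you argue via $\D \equiv t_i^2 \bmod p_i$ being a nonzero square and \cite[Proposition 5.16]{Cox} (exactly as in Remark~\ref{rmk:split}), whereas the paper cites a lemma of Waterhouse; since Cox's proposition concerns odd primes, your route implicitly relies on this section's standing restriction to \emph{odd} ordinary Hasse pairs (the two even ordinary pairs of Lemma~\ref{lem:ord-ord2} being handled separately in Examples~\ref{ex:2-4} and \ref{ex:4-8}), which is legitimate in context but should be said. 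Second, you establish primitivity up front by the norm-and-trace argument ($\pi_i = d\mu$ forces $d^2 \mid p_i^{a_i}$ and $d \mid t_i$, hence $p_i \mid t_i$), while the paper infers primitivity from the norms being prime to the conductor and deploys the trace argument only to kill the mixed factorization; your version is arguably the more self-contained of the two, since an ideal whose norm is prime to the conductor could a priori still be divisible by a rational prime (e.g.\ $(p)$ itself when $p \nmid f$), so the paper's primitivity claim ultimately rests on the same observation $p_i \nmid \pi_i$ that you make explicit. One small omission relative to the paper: the paper also tracks the factorization inside the non-maximal order $\O_{\D} \subseteq \O_K$, which is what gets used later (the class group acting on $j$-invariants is $\Cl(\O_{\D})$); your argument stays in $\O_K$, which suffices for the statement as written but not for all of its downstream uses.
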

\proof
Since we are in the ordinary case we must have that $\gcd(t_i,\D) = 1$, which implies that $p_i \nmid t_i$ and $p_i \nmid f$ for both $i\in\{1, 2\}$.
As a consequence \cite[Lemma, \S4, p. 537]{Waterhouse} both primes $p_i$ must split in $K$ as $p_i = \mathfrak{p}_{i}\overline{\mathfrak{p}}_{i}$, $\mathfrak{p}_{i} \neq \overline{\mathfrak{p}}_{i}$. 
Let us consider the norm equation
\[ (q_i) = (\pi_{i})(\overline{\pi}_{i}) = \mathfrak{p}_{i}^{a_{i}} \overline{\mathfrak{p}}_{i}^{a_{i}}. \]
Since the norms of $\mathfrak{p}_{i}^{a_{i}}$ and $(\pi_{i})$ are prime to the conductor, these ideals are therefore primitive and they belong to the group of ideals with unique factorization in the order $\O_{\D} \subseteq \O_{K}$.
We conclude that $(\pi_{i}) = \mathfrak{p}_{i}^{m_{i}} \overline{\mathfrak{p}}_{i}^{n_{i}}$, for some $m_{i}, n_{i} \in \N$ such that $m_{i} + n_{i} = a_{i}$.
Assume by way of contradiction that both $m_{i}, n_{i}$ are not zero. Then $p_i | \pi_{i}$ and therefore $p_i | t_i = \pi_{i}+\overline{\pi}_{i}$, contradiction.
Therefore we have that either $m_{i} = 0$ or $n_{i} = 0$, so up to conjugation we have 
$ (\pi_{i}) = \mathfrak{p}_{i}^{a_{i}} \ \text{in} \  \O_{\D} \subseteq \O_{K}$. 
\endproof

\begin{lemma} \label{lem:HilSplits}
    Let $\qq$ be an ordinary Hasse pair.
    For every $g\in\Z_{>0}$ with $g|f$, $H_{g^{2}D}$ splits completely with distinct roots in both $\mathbb{F}_{q_i}[x]$.
    In particular, $H_{\D}$ splits completely with distinct roots in both $\F_{q_i}[x]$.
\end{lemma}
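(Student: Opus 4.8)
The plan is to fix an index $i \in \{1,2\}$ and a divisor $g \mid f$, and to show that the reduction of $H_{g^2D}$ modulo $p_i$ factors into $h_{g^2D} = \deg H_{g^2D}$ distinct linear factors over $\F_{q_i}$. Recall that the roots of $H_{g^2D}$ are the $h_{g^2D}$ distinct algebraic integers $j(\mathfrak{a})$, with $\mathfrak{a}$ ranging over $\Cl(\O_{g^2D})$; they lie in the ring of integers $\O_L$ of the ring class field $L = L_{g^2D}$, and $\Gal(L/K) \cong \Cl(\O_{g^2D})$ via $\mathfrak{b} \mapsto \sigma_\mathfrak{b}$. So it suffices to fix a prime $\mathfrak{P}$ of $\O_L$ above $p_i$ and to prove two things: that every $j(\mathfrak{a})$ reduces mod $\mathfrak{P}$ into $\F_{q_i}$, and that these reductions are pairwise distinct.

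For the first point, I would record that we are in the ordinary case, so $\gcd(\D,q_i)=1$ forces $p_i \nmid \D = f^2D$ and hence $p_i \nmid f$, giving $p_i \nmid g$; moreover $p_i$ splits in $K$ as $(p_i) = \mathfrak{p}_i \overline{\mathfrak{p}}_i$ with $(\pi_i) = \mathfrak{p}_i^{a_i}$ by Lemma~\ref{PrimitiveFrobs}. Since $p_i$ splits completely in $K$ and is unramified in $L$, the residue field $\O_L/\mathfrak{P}$ is $\F_{p_i^{r_i}}$, where $r_i$ is the residue degree, equal to the order of the Frobenius substitution $\sigma_{\mathfrak{p}_i} \in \Gal(L/K)$. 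Being elements of $\O_L$, all the $j(\mathfrak{a})$ automatically reduce mod $\mathfrak{P}$ into $\F_{p_i^{r_i}}$, so it remains only to show $r_i \mid a_i$, i.e. $\F_{p_i^{r_i}} \se \F_{q_i}$.

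The heart of the argument is the computation $\sigma_{\mathfrak{p}_i}^{a_i} = \sigma_{\mathfrak{p}_i^{a_i}} = \sigma_{(\pi_i)} = \mathrm{id}$. The last equality is where I expect the main obstacle: I must transfer the principality of $\mathfrak{p}_i^{a_i} = (\pi_i)$ from $\O_K$ to the non-maximal order $\O_{g^2D}$. Here I would use the multiplicative correspondence between $\O_K$-ideals prime to $g$ and proper $\O_{g^2D}$-ideals prime to $g$ \cite[Propositions 7.20 and 7.22]{Cox}: since $\pi_i \in \O_\D \se \O_{g^2D}$ and $\pi_i$ is prime to the conductor (its norm $q_i$ is a power of $p_i \nmid g$), the $\O_K$-ideal $\mathfrak{p}_i^{a_i} = (\pi_i)$ corresponds to the principal $\O_{g^2D}$-ideal $\pi_i\O_{g^2D}$, whose ideal class is trivial. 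Thus $\sigma_{(\pi_i)} = \mathrm{id}$ and $r_i = \mathrm{ord}(\sigma_{\mathfrak{p}_i}) \mid a_i$, as desired, so every root of $H_{g^2D}$ reduces into $\F_{p_i^{r_i}} \se \F_{q_i}$.

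Finally, for separability I would invoke Deuring's reduction theory \cite{Deuring}: for a prime $p_i$ that splits in $K$, is unramified, and is prime to the conductor $g$, reduction modulo $\mathfrak{P}$ yields a bijection between the elliptic curves with CM by $\O_{g^2D}$ in characteristic zero and the ordinary elliptic curves with CM by $\O_{g^2D}$ over $\overline{\F}_{p_i}$. Over $\overline{\F}_{p_i}$ the isomorphism class of an elliptic curve is determined by its $j$-invariant, so the $h_{g^2D}$ distinct values $j(\mathfrak{a})$ must have pairwise distinct reductions. Combining the two points, $H_{g^2D}$ splits into $h_{g^2D}$ distinct linear factors over $\F_{q_i}$; the assertion for $H_\D$ is the special case $g = f$.
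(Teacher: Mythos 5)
Your proof is correct, and it reaches the same key divisibility as the paper --- namely that the residue (inertia) degree of $p_i$ in the ring class field equals the order of $\mathfrak{p}_i$ in $\Cl(\O_{g^2D})$ and divides $a_i$ because $\mathfrak{p}_i^{a_i} = (\pi_i)$ (Lemma~\ref{PrimitiveFrobs}) --- but your execution differs from the paper's in two worthwhile ways. First, the paper handles the maximal conductor case $\O_{\D}$ directly and then transfers to the suborders' quotients by asserting that ``$h_{g^2D} \mid h_{\D}$'' forces $\mathfrak{p}_i\O_{g^2D}$ to become principal at the $a_i$-th power; as stated this is a non sequitur (divisibility of class numbers alone does not preserve principality --- what is really being used is the natural surjection $\Cl(\O_{\D}) \twoheadrightarrow \Cl(\O_{g^2D})$ compatible with the conductor-coprime ideal correspondence). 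Your argument patches exactly this point, and does so uniformly in $g$: since $\pi_i \in \O_{\D} \se \O_{g^2D}$ and $\Norm(\pi_i) = q_i$ is coprime to $g$, the contraction of $(\pi_i)$ under the correspondence of \cite[Propositions 7.20 and 7.22]{Cox} is literally the principal ideal $\pi_i\O_{g^2D}$, so $\sigma_{\mathfrak{p}_i}^{a_i} = \sigma_{(\pi_i)} = \mathrm{id}$ with no detour through $\Cl(\O_{\D})$. Second, for separability the paper bundles ``splits completely with distinct roots'' into a single citation of \cite[Proposition I.8.3]{Neuk}, which strictly speaking requires knowing that $p_i$ does not divide the conductor of the ring generated by a root of $H_{g^2D}$ in $\O_L$; your appeal to Deuring's reduction theory \cite{Deuring} (injectivity of reduction on $j$-invariants of curves with CM by $\O_{g^2D}$ when $p_i$ splits and is prime to the conductor) supplies this distinctness directly and is the standard way to justify it. In short: same skeleton, but your proof is more self-contained at precisely the two steps the paper treats most briskly.
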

\proof
   Let us fix primes $\mathfrak{B}_{i}$ of $\O_{L}$ above $\mathfrak{p}_{i}$. 
   Since our primes $p_i$ neither ramify nor divide the conductor, then by \cite[Proposition I.8.3]{Neuk}, the Hilbert class polynomial $H_{\D}$ splits completely with distinct roots in both finite fields $\F_{p_i^{\iota_{i}}}$, where $\iota_{i}$ is called the inertia degree and it is the degree of the residue field extension 
   \begin{equation} \label{eq:inertiadegree}
    \iota_{i} = [\O_{L_{\D}} / \mathfrak{B}_{i} : \O_{\D} / \mathfrak{p}_{i}].
   \end{equation} 
   Since the inertia degree is the smallest power to which the base ideal, $\mathfrak{p}_{i}$ in this case, becomes principal in $\Cl(\O_{\D})$, and since $\mathfrak{p}_{i}^{a_{i}} = (\pi_{i})$ by Lemma~\ref{PrimitiveFrobs}, then $\iota_{i} | a_{i}$.
   From the fact that $h_{g^{2}D} | h_{\D}$, we have that the primes $\mathfrak{p}_{i}\O_{g^{2}D}$ must also become principal when raised to $a_{i}$, for both $i\in\{1, 2\}$. Hence, by the same \cite[Proposition I.8.3]{Neuk}, $H_{g^{2}D}$ must also split completely with distinct roots over both $\mathbb{F}_{q_i}[x]$.
\endproof

For each $g | f$, we denote by $J_{g^{2}D}$ the set of complex roots of the corresponding Hilbert Class Polynomial $H_{g^{2}D}$. We note that two different $H_{g^{2}D}$ cannot have common roots, and 
we denote by $J$ the (disjoint) union of all these sets.
We similarly denote by $J_{{(g^{2}D)}_{i}}$ and $J_i$ the sets of roots of the same polynomials in $\F_{q_i}$.

\begin{remark}\label{rmk:card1}
Since $H_{g^2D}$ splits completely by Lemma \ref{lem:HilSplits}, the cardinality of each $J_{g^{2}D}$ is $h_{g^2D}$, i.e.
\[ |J_{g^{2}D}| = |J_{{(g^{2}D)}_{1}}| = |J_{{(g^{2}D)}_{2}}|. \]
Therefore, we also have
\[ |J| = |J_{1}| = |J_{2}|. \]
We also remark that, given Lemma \ref{lem:HilSplits}, the sets $J_{{(g^{2}D)}_{i}}$ are already defined in $\F_{p_i^{\iota_i}} \hookrightarrow \F_{q_i}$.
\end{remark}

Since we are in the ordinary case, by Remark~\ref{rmk:uniquej} each set $\E_i$ can be identified with the set of $j$-invariants of their elliptic curves. We note that this cannot be done in the supersingular case (Example \ref{ex:4-7}).
Remark~\ref{rmk:card1} now implies that we can identify the sets $\E_i$  with $J_i$.

\begin{lemma} \label{lem:goodproj}
    Let $j^{(0)} \in L$ be a root of $H_{\D}$ and $r$ be a fixed square root of $\D$. Let also $D_0 = \textnormal{Disc}(H_{\D}) \in \Z$.
    For every $j \in J$, there exists $h_j(x,y) \in \Z[x,y]$ such that
    \[ j = \frac{h_j(j^{(0)},r)}{2\,D_0}. \]
    Moreover, if $p$ is an odd prime such that $H_{\D}$ does not have repeated roots over $\bar{\F}_p$, then $\gcd(2\,D_0,p)=1$.
\end{lemma}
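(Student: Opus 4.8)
The plan is to turn the statement into a denominator-tracking computation in the field $L = L_{\D}$. First I would record that every $j \in J$ lies in $L$: indeed, for each $g \mid f$ one has $L_{g^2D} \subseteq L_{\D} = L$, and $j, j^{(0)}, r$ are all algebraic integers of $L$. Since $H_{\D}$ is irreducible over $K$ of degree $h_{\D}$ (its degree cannot drop, as $[L:K] = h_{\D}$ and $L = K(j^{(0)})$), the powers $1, j^{(0)}, \dots, (j^{(0)})^{h_{\D}-1}$ form a $K$-basis of $L$. Thus each $j \in J$ is uniquely $j = \sum_{k=0}^{h_{\D}-1} c_k (j^{(0)})^k$ with $c_k \in K$, and the whole problem reduces to controlling the denominators of the $c_k$.

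Next I would pin the $c_k$ down by Lagrange interpolation at the conjugates of $j^{(0)}$. By the isomorphism $\Cl(\O_{\D}) \simeq \Gal(L/K)$ recalled above, the Galois conjugates $\{\sigma(j^{(0)})\}_{\sigma \in \Gal(L/K)}$ are exactly the $h_{\D}$ roots of $H_{\D}$, on which the group acts simply transitively. Interpolating the data $(\sigma(j^{(0)}), \sigma(j))$ produces the $c_k$ with denominators $H_{\D}'(\sigma(j^{(0)}))$, whose product over all $\sigma$ equals $\pm \disc(H_{\D}) = \pm D_0$. Since the resulting numerators are symmetric in the conjugates, they are $\Gal(L/K)$-invariant algebraic integers, hence lie in $\O_K$; this gives $D_0\, c_k \in \O_K$. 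Equivalently, one may simply invoke the standard inclusion $D_0\, \O_L \subseteq \O_K[j^{(0)}]$ coming from the relative discriminant of the power basis.

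Then I would convert $\O_K$-coefficients into $\Z[r]$-coefficients. The point is that the CM order attached to the pair is $\O_{\D} = \Z[\pi]$ with $\pi = (t + r)/2$ (Remark~\ref{rmk:SameOrders}), so that $r = 2\pi - t$ gives $\Z[r] \subseteq \O_{\D}$ and, crucially, $2\,\O_{\D} \subseteq \Z[r]$ because $2\pi = t + r \in \Z[r]$. Consequently, if $D_0\, c_k \in \O_{\D}$, then $2 D_0\, c_k \in \Z[r]$; writing $2D_0 c_k = u_k + v_k r$ with $u_k, v_k \in \Z$ and setting $h_j(x,y) = \sum_k (u_k + v_k y)\,x^k \in \Z[x,y]$ yields exactly $j = h_j(j^{(0)}, r)/(2D_0)$. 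The hard part is the promotion of the previous step's conclusion from $\O_K$ to $\O_{\D}$: a priori the interpolation only places $D_0 c_k$ in the maximal order $\O_K = \Z[(1+\sqrt{D})/2]$, and passing to $\Z[r] = \Z[f\sqrt{D}]$ would naively cost an extra conductor factor $f$. I would control this by exploiting that in the ordinary case $p_i \nmid f$ and that the $j$-invariants in $J$ are precisely those of orders of conductor dividing $f$ inside $\O_{\D}$, so that the $\sqrt{D}$-component of each $D_0 c_k$ carries the required divisibility by $f$. Ruling out this spurious factor $f$ (equivalently, showing the coefficients genuinely land in $\O_{\D}$) is the main obstacle, and is where the CM-specific structure, rather than pure linear algebra, enters.

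Finally, the ``Moreover'' assertion is immediate: $H_{\D}$ is monic with integer coefficients, so its reduction modulo $p$ has repeated roots over $\bar{\F}_p$ if and only if $p \mid \disc(H_{\D}) = D_0$. Hence ``$H_{\D}$ has no repeated roots over $\bar{\F}_p$'' forces $p \nmid D_0$, and together with $p$ being odd this gives $\gcd(2D_0, p) = 1$.
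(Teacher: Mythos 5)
Your outline reproduces the paper's proof almost step for step: the paper also begins by noting $j \in L_{g^2D} \se L$ and $j \in \O_L$ (since $H_{\O}$ is monic and integral), then invokes precisely the inclusion you re-derive by Lagrange interpolation, namely $D_0\,\O_L \se \O_K[j^{(0)}]$ (citing \cite[Lemma I.2.9]{Neuk} rather than reproving it), and it settles the ``moreover'' part exactly as you do, via $D_0 = \pm\textnormal{Res}_x\big(H_{\D}(x),H_{\D}'(x)\big)$. The genuine gap in your proposal is the step you yourself flag and then leave open: you never prove that $D_0 c_k$ lies in $\O_{\D}$ rather than merely in $\O_K$, i.e.\ that the $\sqrt{D}$-components of $2D_0 c_k$ are divisible by the conductor $f$. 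You only announce that ``CM-specific structure'' should supply this divisibility, with no argument. Since $r^2 = \D = f^2D$ gives $\Z[j^{(0)},r] \cap K = \Z[r] = \Z + \Z f\sqrt{D}$, while interpolation only yields $2D_0 c_k \in 2\O_K \se \Z[\sqrt{D}\,]$, the conductor factor $f$ genuinely stands in the way, and there is no evident reason the claimed $f$-divisibility holds for the singular moduli in $J$. So, as written, your proof is incomplete at exactly the point where the denominator $2D_0$ must be achieved.

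You should know, however, that the obstacle you isolate is not treated in the paper either: its proof simply asserts $D_0^{-1}\O_K[j^{(0)}] \se (2D_0)^{-1}\Z[j^{(0)},r]$, which amounts to $2\O_K \se \Z[r]$ and is literally correct only when $f=1$, because $2\cdot\frac{1+\sqrt{D}}{2} = 1+\sqrt{D} \notin \Z + \Z f\sqrt{D}$ for $f>1$ (and ordinary Hasse pairs with $f>1$ do occur, e.g.\ $(1021,1069)$ has $\D = -3\cdot 5^4$). The robust repair is not the divisibility you hope for but a weakening of the constant: from $2f\,\O_K \se \Z[r]$ one gets $\O_L \se (2fD_0)^{-1}\Z[j^{(0)},r]$, i.e.\ the lemma with denominator $2fD_0$ — equivalently, keep $2D_0$ but let $r$ be a square root of the fundamental discriminant $D$. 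This costs nothing downstream: for an ordinary pair one has $\gcd(\D,q_i)=1$, hence $p_i \nmid f$, so $\gcd(2fD_0,p_i)=1$ still holds and Remark~\ref{rmk:Bij} and Theorem~\ref{thm:graphsIso} go through verbatim. In short: your skeleton matches the paper's, your Step 3 is a real gap as written, and the honest resolution is to adjust the denominator (or the choice of $r$) rather than to attempt the unproven $f$-divisibility.
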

\proof For every order $\O_{\D} \se \O \se \O_K$ and every $j \in J$, we have $j \in L_{g^{2}D} \se L = K(j^{(0)})$ \cite[Theorem 10.6]{Lang} and $H_{\O}(j) = 0$.
Since $H_{\O}$ is a monic integral polynomial, then $j \in \O_L$.
By classical properties of simple separable extensions \cite[Lemma I.2.9]{Neuk}, we have
\[\O_L \se D_0^{-1} \O_K[ j^{(0)} ] \se (2D_0)^{-1} \Z[ j^{(0)}, r ].\]
Thus, every $j \in \O_L$ can be written as $\frac{h_j(j^{(0)},r)}{2 D_0}$ for some $h_j(x) \in \Z[x,y]$.
Moreover, since $H_{\D}$ is the minimal polynomial of $j^{(0)} \in L$ over $\Q$ \cite[Theorem II.4.3]{SilvermanAdv}, we have
\[ D_0 = \pm \textnormal{Disc}(H_{\D}) = \pm \textnormal{Res}_x \big(H_{\D}(x),H_{\D}'(x)\big) \in \Z.\]
Then $H_{\D}$ has no repeated roots in $\bar{\F}_p$ precisely when $D_0 = \pm \textnormal{Res}_x \big(H_{\D}(x),H_{\D}'(x)\big) \in \bar{\F}_p^*$, i.e. when $D_0$ is coprime to $p$.
\endproof

\begin{remark} \label{rmk:Bij}
We fix once and for all a square root $r$ of $\D \in \mathbb{C}$, and an element $j^{(0)} \in J_{\D}$, which generates $L_{\D}$ over $K$.
Now, given Remark \ref{rmk:card1} and Lemma \ref{lem:goodproj}, any fixed choice of a square root $r_{i}$ of $\D \in \mathbb{F}_{p_i^{\iota_{i}}}$ and $j^{(0)}_{i} \in J_{(\D)_{i}}$, induces an explicit bijection 
\[\Pi_{i} : J \to J_{i} \subseteq \F_{p_i^{\iota_{i}}}, \quad \frac{h_j(j^{(0)},r)}{2D_0} \mapsto \frac{h_j(j^{(0)}_{i},r_i)}{2D_0}.\]
By embedding $\F_{p_i^{\iota_i}}$ into $\mathbb{F}_{q_i}$, this extends to a bijection $\Pi_{i} : J \to J_{i} \subseteq \F_{q_i}$, which we denote in the same way. % by abuse of notation. 
We notice that the above choices of $r_i$ and $j^{(0)}_{i}$ are associated with a fixed choice of primes $\P_i \subset \O_L$ and $\mathfrak{p}_{i} \subset \O_K$ such that $\mathfrak{P}_{i} | \mathfrak{p}_{i} | p_i$, for both $i\in\{1, 2\}$. 
\end{remark}

\subsection{The Isogeny Graphs} \label{subsec:IsoGraphs}

\begin{defn}
    Let $\B \subset \N$ be a finite set of integer primes, and $\E$ be a set of elliptic curves defined over a finite field $\k$, up to $\k$-isomorphism.
    We define the \emph{$\B$-isogeny graph} of $\E$, denoted by $\GB(\E)$, as the directed colored multigraph, whose vertices are the elements in $\E$, and the edges are cyclic $\k$-isogenies between them.
    The color of any edge corresponds to the degree of the isogeny, which is some prime in $B$.
\end{defn}
%\Eleni{In the definition above: After our last discussion, is the color of any edge necessarily some prime number in B? Or could it be a product of primes in B?}

\begin{remark}
    When drawing isogeny graphs we use the convention that an undirected edge corresponds to one edge in each direction. 
    We recall that $j=1728$ can appear in an ordinary $\GB(\E)$ only in even characteristic (so it equals $j = 0$) by Remark \ref{rmk:no1728}.
    Since every isogeny has a dual isogeny \cite[Section III.6]{Silverman}, and $\Aut(E) = \{\pm \textnormal{id}\}$ for every elliptic curve $E$ with $j(E) \neq 0, 1728$ \cite[Section III.10]{Silverman}, our graphs may be thought of as undirected, with the only possible exception of isogenies from $j=0$.
\end{remark}

\begin{lemma}\label{lem:FrobVer}
Let $\qq$ be an ordinary Hasse pair.
For both $i \in \{1,2\}$, the graphs $G_{\{p_i\}}(\E_1)$ and $G_{\{p_i\}}(\E_2)$ are isomorphic and consist of $h_{\D}/\iota_i$ cycles of length $\iota_i$ (as defined in \eqref{eq:inertiadegree}).
\end{lemma}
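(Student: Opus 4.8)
I want to show that the $\{p_i\}$-isogeny graphs of $\E_1$ and $\E_2$ are isomorphic, and each is a disjoint union of $h_\D/\iota_i$ cycles of length $\iota_i$. The key structural fact is that in the ordinary case, by Lemma~\ref{PrimitiveFrobs}, the prime $p_i$ splits as $\mathfrak{p}_i\overline{\mathfrak{p}}_i$ with $\mathfrak{p}_i \neq \overline{\mathfrak{p}}_i$, and the class $[\mathfrak{p}_i] \in \Cl(\O_\D)$ acts freely and transitively on the set of $j$-invariants $J_\D$ via the action $\sigma_{\mathfrak{b}} j(\mathfrak{a}) = j(\mathfrak{b}^{-1}\mathfrak{a})$ recalled in Section~\ref{Subs:Aux}. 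Using the identification of $\E_i$ with $J_i$ (Remark~\ref{rmk:card1}) and the explicit bijections $\Pi_i : J \to J_i$ of Remark~\ref{rmk:Bij}, I plan to transport everything back to the complex picture and argue there.

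\textbf{Steps.} First I would recall that a $p_i$-isogeny between ordinary curves with CM by $\O_\D$ corresponds, on the level of $j$-invariants, to the action of a prime ideal of norm $p_i$ lying above $p_i$; since $p_i$ splits, these are exactly $\mathfrak{p}_i$ and $\overline{\mathfrak{p}}_i$. Concretely, an edge $j(\mathfrak{a}) \to j(\mathfrak{p}_i^{-1}\mathfrak{a})$ is a $p_i$-isogeny, and its dual corresponds to $\overline{\mathfrak{p}}_i$. Thus the $\{p_i\}$-isogeny graph on $J_\D$ is precisely the Cayley-type graph given by the free transitive action of $[\mathfrak{p}_i]$ on $J_\D$: each vertex has one outgoing and one incoming $\mathfrak{p}_i$-edge, so the graph decomposes into disjoint cycles, each cycle being an orbit under the cyclic subgroup $\langle [\mathfrak{p}_i]\rangle \le \Cl(\O_\D)$. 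The length of each cycle is the order of $[\mathfrak{p}_i]$ in $\Cl(\O_\D)$, which by Lemma~\ref{lem:HilSplits} and the discussion around \eqref{eq:inertiadegree} is exactly the inertia degree $\iota_i$ (the smallest power making $\mathfrak{p}_i$ principal). Since the action is transitive and $|\langle[\mathfrak{p}_i]\rangle| = \iota_i$, the number of orbits is $h_\D/\iota_i$, giving $h_\D/\iota_i$ cycles of length $\iota_i$.

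\textbf{From the complex graph to both finite fields.} Next I would argue that this combinatorial description is the same over $\F_{q_1}$ and over $\F_{q_2}$. The bijections $\Pi_i$ of Remark~\ref{rmk:Bij} identify $J$ with $J_i$ and are defined via reduction modulo the chosen primes $\P_i \mid \mathfrak{p}_i \mid p_i$. The crucial point is that reduction modulo a prime above $p_i$ preserves the CM action: Deuring reduction sends the complex isogeny induced by $\mathfrak{p}_i$ to a $p_i$-isogeny of the reduced curves, and sends the $\mathfrak{p}_i$-action on $j$-invariants to the corresponding action on reduced $j$-invariants. Hence under $\Pi_i$ the abstract cycle graph on $J_\D$ determined by $[\mathfrak{p}_i]$ maps isomorphically onto the $\{p_i\}$-isogeny graph $G_{\{p_i\}}(\E_i)$ on $J_i$. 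Since the underlying abstract graph (orbits of $[\mathfrak{p}_i]$ acting on $\Cl(\O_\D)$) depends only on $\Cl(\O_\D)$ and the class $[\mathfrak{p}_i]$, and since $\mathfrak{p}_1$ and $\mathfrak{p}_2$ lie above different rational primes but are compared through the common complex model $J_\D$, the graphs $G_{\{p_i\}}(\E_1)$ and $G_{\{p_i\}}(\E_2)$ are both isomorphic to this single abstract graph, hence to each other.

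\textbf{Main obstacle.} The delicate point — and the step I expect to require the most care — is showing that reduction modulo $\P_i$ genuinely intertwines the $\mathfrak{p}_i$-action on $J_\D$ with the $p_i$-isogeny structure over $\F_{q_i}$, i.e. that a horizontal $p_i$-isogeny over $\F_{q_i}$ lifts to the complex isogeny coming from $\mathfrak{p}_i$ and not some other ideal of norm $p_i$. This is exactly where ordinariness is essential (it guarantees $p_i$ splits, that the curves and their endomorphisms lift by Deuring's theorem, and that the CM action is compatible with reduction). I would lean on the reduction-compatibility of the CM action and on the identification of $p_i$-isogenies with the two primes $\mathfrak{p}_i, \overline{\mathfrak{p}}_i$ to pin down that the cycle length is governed by $\langle[\mathfrak{p}_i]\rangle$ and hence equals $\iota_i$; the remaining counting of cycles is then immediate from the orbit-counting for a free transitive group action restricted to a cyclic subgroup.
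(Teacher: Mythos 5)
Your proposal is correct, and it reaches the lemma by a genuinely different route than the paper. The paper argues in two separate cases: over $\F_{q_i}$ (the native characteristic) it observes that the only degree-$p_i$ isogenies of an ordinary curve are the Frobenius and the Verschiebung, so $G_{\{p_i\}}(\E_i)$ is a union of Frobenius cycles whose length is the degree $\iota_i$ of the minimal field of definition of the $j$-invariants; over the other field it computes the eigenvalues of Frobenius modulo $p_i$ via the identity $f_1(x) = f_2(-x+1)$ (a consequence of $t_1+t_2=2$), checks they are distinct and nonzero so that $p_i$ is an Elkies prime, and then invokes Sutherland's volcano theorem to obtain $h_{\D}/\iota_i$ cycles of length $\iota_i$ there too; the isomorphism is concluded by matching these cycle data. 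You instead give a uniform argument: both graphs are images, under the reduction bijections $\Pi_j$ of Remark~\ref{rmk:Bij}, of the Cayley-type graph of $\langle[\mathfrak{p}_i]\rangle$ acting on $J_{\D}$, with cycle length the order of $[\mathfrak{p}_i]$ in $\Cl(\O_{\D})$ --- which equals $\iota_i$ exactly as recorded in the proof of Lemma~\ref{lem:HilSplits}. This buys an explicit graph isomorphism (in the spirit of what Theorem~\ref{thm:graphsIso} later constructs) rather than mere equality of numerical invariants, and it bypasses the citation of Sutherland; the price is the compatibility of Deuring reduction with the CM action, in particular that reduction modulo $\P_i \mid \mathfrak{p}_i$ turns the horizontal $\mathfrak{p}_i$-isogeny into the inseparable $p_i$-power Frobenius, i.e. $j(\mathfrak{a})^{p_i} \equiv j(\mathfrak{p}_i^{-1}\mathfrak{a}) \bmod \P_i$. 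You correctly flag this as the delicate step; it is a standard fact (Deuring, Shimura reciprocity) and no deeper than what the paper itself uses implicitly when asserting that the Frobenius cycles have length $\iota_i$. One point you should make explicit, as the paper does: since $p_i \nmid f$ in the ordinary case, all $p_i$-isogenies are horizontal, so the graph decomposes level-by-level over the divisors $g \mid f$, and the stated count of $h_{\D}/\iota_i$ cycles refers to the restriction to the floor level $J_{(\D)_1}$, $J_{(\D)_2}$ on which $\Cl(\O_{\D})$ acts freely and transitively --- your orbit count is carried out there, but the restriction itself deserves a sentence.
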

\proof
Since the relation is symmetric, we can assume without loss of generality that $i=2$.
We can also restrict to vertices contained in $J_{(\D)_1}$ and $J_{(\D)_2}$, because all the $p_2$-isogenies are horizontal since $\qq$ is ordinary. 
We compute the eigenvalues of the $\pi_{i}$'s by examining the splitting of their minimal polynomials $f_{i}(x) \bmod p_2$. Given Remark \ref{rmk:FrobsPrim}, a simple computation shows that
\[ f_{1}(x) = f_{2}(-x+1). \]
We compute the eigenvalues $\{\mu_{i}, \nu_{i}\}$ of each $\pi_{i}$ as the roots of
\begin{align*}
    f_{2}(x) &\equiv x(x-t_2) \bmod p_2, \\
    f_{1}(x) = f_{2}(-x+1) &\equiv (x - 1)\big(x-(1-t_2)\big) \bmod p_2.
\end{align*}
For ordinary elliptic curves in characteristic $p_2$, there are only two isogenies of degree $p_2$: the Frobenius isogeny, which is inseparable, and its dual, called the \emph{Verschiebung}, which is separable.
The Frobenius isogeny creates cycles of length equal to the degree of the minimal extension field where the elliptic curve is defined, which in this case is $\iota_{2}$ as in \eqref{eq:inertiadegree}. %, following the notation of Lemma~\ref{lem:HilSplits}.
Therefore, the number of cycles in this case is $h_{\D}/\iota_{2}$.
%(see \cite[Corollary 48, Case (2)]{DeFeo}). 

We now look at the eigenvalues $\lambda_{1} = 1$ and $\mu_{1} = 1- t_2$ of $f_{1}$.
We see that $\mu_{1} \not\equiv 0 \bmod p_2$ since otherwise, from the equation $q_1 = q_2+1-t_2$, we would have that $p_2 | q_1$, which is not possible for odd Hasse pairs.
Furthermore, $\lambda_{1} \not\equiv \mu_{1} \bmod p_2$, as $t_2 \equiv 0 \bmod p_2$ would lead to supersingular curves. 
Since we have two distinct eigenvalues, this implies that $p_2$ is an Elkies prime and we have $h_{\D}/\iota_2$ cycles of degree-$p_2$ isogenies in this case as well (\cite[Theorem 7]{SutherlandVolcanoes}).
%(see for example \cite[Section 2.2.]{DeFeo}).
%\cite[Case (2), p. 20]{DeFeo}. 
Thus, $G_{\{p_2\}}(\E_1)$ and $G_{\{p_2\}}(\E_2)$ have the same structure.
\endproof

\begin{thm} \label{thm:graphsIso}
Let $\qq$ be an ordinary Hasse pair and $B \subset \N$ be a finite set of primes.
Then $\GB(\E_1)$ is isomorphic to $\GB(\E_2)$.
\end{thm}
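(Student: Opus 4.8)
The plan is to exhibit a single vertex bijection and to verify that it preserves edges of every color $\ell \in B$ at once. Since $B$ is finite and edges of distinct degrees never interact, it suffices to fix one bijection $\E_1 \to \E_2$ and check, color by color, that it carries degree-$\ell$ isogenies bijectively onto degree-$\ell$ isogenies. First I would reduce to the odd ordinary case: by Lemma~\ref{lem:ord-ord2} the only non-odd ordinary Hasse pairs are $((2,4))$ and $((4,8))$, whose graphs are isomorphic by direct inspection (Examples~\ref{ex:2-4} and~\ref{ex:4-8}). For an odd ordinary pair I would take
\[ \Phi = \Pi_2 \circ \Pi_1^{-1} : \E_1 \to \E_2, \]
using the identifications $\E_i = J_i$ from Remarks~\ref{rmk:uniquej} and~\ref{rmk:card1} together with the explicit maps $\Pi_i : J \to J_i$ of Remark~\ref{rmk:Bij}.

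The heart of the argument is that each $\Pi_i$ is a reduction map, hence intertwines the characteristic-zero CM structure with the characteristic-$p_i$ isogeny structure. Concretely, I would invoke Deuring's reduction together with the $\Cl(\O_{g^{2}D}) \simeq \Gal(L_{g^{2}D}/K)$ correspondence of Section~\ref{Subs:Aux}: the ideal action $\sigma_\mathfrak{b} : j(\mathfrak{a}) \mapsto j(\mathfrak{b}^{-1}\mathfrak{a})$ reduces, under $\Pi_i$, to the corresponding class-group action on the reduced $j$-invariants, and the order stratification $J = \bigsqcup_{g \mid f} J_{g^{2}D}$ is preserved because ordinary reduction preserves the endomorphism order. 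Thus $\Phi$ is equivariant for the class-group actions and respects the CM-order of each vertex. For any prime $\ell \notin \{p_1,p_2\}$ the degree-$\ell$ edges are exactly the horizontal isogenies afforded by the primes $\mathfrak{l} \mid \ell$ of $\O_{g^{2}D}$ together with the vertical isogenies between consecutive orders; these are governed solely by $\D$ (via $\left(\tfrac{\D}{\ell}\right)$ and the $\ell$-valuation of $f$), which is common to both sets by Lemma~\ref{lem:sameD}. Reduction preserves the degree and, via duals, the direction of a separable isogeny, so $\Phi$ sends the degree-$\ell$ subgraph of $\GB(\E_1)$ isomorphically onto that of $\GB(\E_2)$.

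The delicate colors are $\ell = p_1$ and $\ell = p_2$, and here I expect the main obstacle. In $\E_i$ the degree-$p_i$ isogenies are the inseparable Frobenius and its Verschiebung, whereas in $\E_j$ with $j \neq i$ the prime $p_i$ splits in $K$ — since $\D \equiv t_i^{2} \bmod p_i$ with $p_i \nmid t_i$ — so they are separable Elkies isogenies. The key point is that both incarnations realize the action of the \emph{same} ideal class $[\mathfrak{p}_i] \in \Cl(\O_{\D})$: by Lemma~\ref{PrimitiveFrobs} we have $(\pi_i) = \mathfrak{p}_i^{a_{i}}$, and Deuring's theorem says precisely that the characteristic-zero $\mathfrak{p}_i$-isogeny reduces to the $p_i$-power Frobenius in characteristic $p_i$ while remaining separable in characteristic $p_j$. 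Hence the $[\mathfrak{p}_i]$-orbits — which by Lemma~\ref{lem:FrobVer} form $h_{\D}/\iota_i$ cycles of length $\iota_i$ on either side — are matched by the equivariant map $\Phi$, with colors and (up to the Frobenius/Verschiebung duality) directions preserved. Assembling the three cases shows that $\Phi$ is an isomorphism of directed colored multigraphs for every finite $B$, which is the claim. The resolution of the obstacle is thus that the inseparable and separable realizations of $p_i$-isogenies encode one and the same combinatorial $[\mathfrak{p}_i]$-action, so a single class-group-equivariant bijection works uniformly across all edge colors.
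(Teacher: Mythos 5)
Your proposal follows essentially the same route as the paper: the reduction to odd ordinary pairs via Lemma~\ref{lem:ord-ord2}, the vertex bijection built from the reduction maps $\Pi_i$ of Remark~\ref{rmk:Bij} (the paper embeds the characteristic-zero graph $\GB(\D)$ into both $\GB(\E_i)$'s, which composes to your $\Phi = \Pi_2 \circ \Pi_1^{-1}$), the observation that for $\ell \notin \{p_1,p_2\}$ the local structure depends only on $\D$ (the paper cites \cite[Theorem 7]{SutherlandVolcanoes} and \cite[Proposition 23]{Kohel}), and Lemma~\ref{lem:FrobVer} for the colors $p_1, p_2$. Your equivariance framing for the $p_i$-edges --- that Frobenius in characteristic $p_i$ and the horizontal Elkies $\mathfrak{p}_i$-isogeny in characteristic $p_j$ realize the same class $[\mathfrak{p}_i]$, so a single class-group-equivariant bijection matches all colors at once --- is a clean way to phrase what the paper does.

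However, there is one genuine gap: you never treat the vertex $j = 0$, which occurs (in both graphs simultaneously) exactly when $D = -3$, and at which every structural input you invoke breaks down. Sutherland's result on volcano structure excludes $j \in \{0, 1728\}$, and the class-group action is no longer simply transitive on \emph{isogenies} there: since $\Aut(E) \simeq \mu_6$ for $j(E)=0$, distinct isogenies induce the same ideal-class action, so equivariance of $\Phi$ pins down the edges only up to multiplicity. Concretely, the directed multigraph has three parallel descending $\ell$-edges $0 \to j$ for each neighbor $j \neq 0$ (but a single ascending edge $j \to 0$), and $\left(\frac{\D}{\ell}\right)+1$ loops at $0$; to conclude these multiplicities agree in \emph{both} characteristics one must check that the order-$3$ automorphisms are rational on both sides, i.e.\ that $p_i \equiv 1 \bmod 3$ (equivalently $\sqrt{-3} \in \F_{p_i}$) whenever a $j=0$ curve is ordinary --- this is precisely the final paragraph of the paper's proof, and without it your argument establishes an isomorphism of the underlying colored graphs but not of the directed colored \emph{multigraphs} claimed by the theorem in the case $D = -3$.
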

\proof 
%% DEFINING THE char = 0 GRAPH
Let $\GB(\D)$ be the directed colored multigraph (in characteristic zero) whose vertices are the elements of $J$ and whose edges $\phi: j_1 \to j_2$ represent isogenies of prime color $\deg \phi = \ell$ between curves with $j$-invariants $j_1$ and $j_2$, for any $\ell \in B$. 

%% PROJECTING
By adapting \cite[Proposition II.4.4]{SilvermanAdv} to the context of Remark~\ref{rmk:Bij}, for both $i\in\{1, 2\}$ we have an injection 
%\[ \Hom( E_1, E_2 ) \to \Hom( E_1 \bmod \P_i, E_2 \bmod \P_i ) \]
\[ \Hom( j_1, j_2 ) \to \Hom\big( \Pi_i(j_1), \Pi_i(j_2) \big), \]
which preserves the degree of the isogenies.
Therefore, there is an isomorphic copy of $\GB(\D)$ over both the $\F_{p_i^{\iota_{i}}}$'s.
Since the isogenies, as well as the $j$-invariants involved, are defined over $\F_{p_i^{\iota_{i}}}$, they are also defined over $\F_{q_i}$, therefore the maps $\Pi_i$ are isomorphic graph embeddings of $\GB(\D)$ into the $\GB(\E_i)$'s.

%% COUNTING ARGUMENT
Since all the considered curves are ordinary, the structure of the isogeny graphs $\GB(\D)$, $\GB(\E_1)$, and $\GB(\E_2)$ only depends on $\D$.
Therefore, the number and type of isogenies (horizontal or vertical) from curves with $j$-invariant different from $0$ are the same in both characteristics (by \cite[Theorem 7]{SutherlandVolcanoes} and \cite[Proposition 23]{Kohel}) for isogenies of degree $l \not\in \{p_1,p_2\}$.
By Lemma \ref{lem:FrobVer}, the same holds for isogenies of degree $p_1$ and $p_2$. 

%% FIXING j=0

Finally, either none or both sets $\E_i$ contain a curve $E^0_i$ with $j(E^0_i) = 0$, as this occurs if and only if the squarefree part of $\D$ is $D = -3$.
If such curves exist, then we have
\[ \End_{\F_{p_1}}(E^0_1) \simeq \End_{\F_{p_2}}(E^0_2) \simeq \Z\left[\frac{1+\sqrt{-3}}{2}\right]. \]
Since a $j$-invariant zero curve over $\F_{p_i^{a_i}}$ is ordinary if and only if $p_i \equiv 1 \bmod 3$, then $\sqrt{-3}$ is defined over both the $\F_{p_i}$'s (and consequently over the $\F_{q_i}$'s).
Thus, in both the $\GB(\E_i)$'s, if we have any descending $l$-isogenies $0 \to j \neq 0$, then there are precisely three distinct $l$-isogenies from $0$ to $j$.
Moreover, in this case, both the isogeny graphs have exactly $(\frac{\D}{l})+1$ isogenies of degree $l$ from $0$ to itself.
\endproof

\section{Examples of isogeny graphs} \label{examples}

\begin{ex} \label{ex:4-7}
    The discriminant of the Hasse pair $\qq = (4,7)$ is $\D = -12$.
    Given $\F_4 = \F_2(\gamma) \simeq \F_2[x]/(x^2+x+1)$, one can directly verify that $\E_1$ contains only the following two elliptic curves
    \begin{equation} \label{eq:0A0B}
        0_A \ : \ y^2 + \gamma y = x^3 + 1, \quad \quad
        0_B \ : \ y^2 + \gamma^2 y = x^3 + \gamma^2,
    \end{equation} 
    which are both supersingular and their $j$-invariant is $0 \in \F_4$.
    On the other hand, $\E_2$ is ordinary and contains the curves of $j$-invariant $0$ and $2 \in \F_7$.
    The $2$-Frobenius map $\pi_2$ connects the curves $0_A$ and $0_B$ in $\textnormal{G}_{\{2\}}(\E_1)$, while there are three descending and one ascending $2$-isogenies in $\textnormal{G}_{\{2\}}(\E_2)$.
    Thus, the isogeny graphs are not isomorphic as shown in Figure \ref{fig:NonIso}.
\begin{figure}[h]
    \centering
    \begin{tikzpicture}[scale=0.9]
        \node (1) at (-1,-2) {$0_A$};
        \node (2) at (1,-2) {$0_B$};
        \draw[blue!50!black] (1) -- (2) node[midway,above] {$\pi_2$};
    \end{tikzpicture} \qquad \qquad \qquad \qquad
    \begin{tikzpicture}[scale=0.9]
        \node (1) at (0,0) {$0$};
        \node (2) at (0,-2) {$2$};
        %\node (pi) at (0,5) {\textcolor{green!50!black}{$\pi_2$}};
        %\draw[green!50!black] ([xshift=0.15cm]1) arc(-65:250:0.4);
        \draw[->] (1) edge[bend right=28, blue!50!black] (2);
        \draw (1) edge[blue!50!black](2);
        \draw[->] (1) edge[bend left=28, blue!50!black] (2);
    \end{tikzpicture}
    \caption{$\{\textcolor{blue}{2}\}$-isogeny graphs for the Hasse pair $(4,7)$. The curves $0_A$ and $0_B$ are defined as in \eqref{eq:0A0B}.}
    \label{fig:NonIso}
\end{figure}
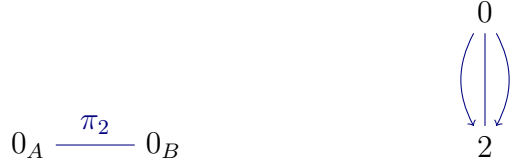
\end{ex}

\begin{ex} \label{ex:2-4}
    The pair $\qq = (2,4)$ is an ordinary Hasse pair with associated discriminant $\D = -7$.
    Given $\F_4 = \F_2(\gamma) \simeq \F_2[x]/(x^2+x+1)$, one can directly verify that $\E_1$ only contains the curve $E_1$ defined by $y^2 + xy = x^3 + 1$ over $\F_2$, while $\E_2$ only contains the curve $E_2$ defined by $y^2 + xy = x^3 + \gamma^2 x^2 + 1$ over $\F_4$, both of $j$-invariant $1$.
    If $\pi_2$ is the $2$-Frobenius map, one can also check that
    \[ \End_{\F_2}(E_1) = \Z[\pi_2] = \End_{\F_4}(E_2), \]
    so the isogeny graphs are isomorphic and given in Figure \ref{fig:Ex2}.
\begin{figure}[h]
    \centering
    \begin{tikzpicture}[scale=0.9]
        \node (1) at (0,4) {$1$};
        \node (pi) at (0,5) {\textcolor{green!50!black}{$\pi_2$}};
        %\draw[green!50!black] ([xshift=0.15cm]1) arc(-65:250:0.4);
        \draw[->,green!50!black] (1)++(0.2,0)arc(-60:90:0.4);
        \draw[-,green!50!black] (1)++(0.2,0)arc(-60:240:0.4);
    \end{tikzpicture} \qquad \qquad \qquad \qquad
    \begin{tikzpicture}[scale=0.9]
        \node (1) at (0,4) {$1$};
        \node (pi) at (0,5) {\textcolor{green!50!black}{$\pi_2$}};
        %\draw[green!50!black] ([xshift=0.15cm]1) arc(-65:250:0.4);
        \draw[->,green!50!black] (1)++(0.2,0)arc(-60:90:0.4);
        \draw[-,green!50!black] (1)++(0.2,0)arc(-60:240:0.4);
    \end{tikzpicture}
    \caption{$\B$-isogeny graphs $\GB(\E_1)$ (on the left) and of $\GB(\E_2)$ (on the right) for $\qq \in \{((2,4)),((4,8))\}$ for any $2 \in \B \se \N$.}
    \label{fig:Ex2}
\end{figure}
\end{ex}

\begin{ex} \label{ex:4-8}
    The pair $\qq = (4,8)$ is the second of the two ordinary non-odd Hasse pair of Lemma \ref{lem:ord-ord2}, the other one being Example \ref{ex:2-4}.
    The associated discriminant is again $\D = -7$.
    One can directly verify that both sets $\E_i$ contain only the curve defined by the model $y^2 + xy = x^3 + 1$, of $j$-invariant $1$.
    Notice that in $\E_1$ we have the (unique) twist of $E_2$ from Example \ref{ex:2-4}.
    Therefore the endomorphism ring for both curves is $\Z[\pi_2]$, and so the isogeny graphs are isomorphic and are shown again in Figure \ref{fig:Ex2}.
\end{ex}

\begin{ex}
    Since $\sqrt{587} = 24.228\dots$, then $\qq = (5^4,587)$ is an odd Hasse pair, which is ordinary since its associated discriminant $\D = -979 = -11 \cdot 89$ is coprime with both the $q_i$'s (Lemma~\ref{lem:TypeSqr}).
    Thus, the isogeny graphs of $\E_1$ and $\E_2$ are the same by Theorem~\ref{thm:graphsIso}.
    This correspondence is portrayed for isogenies of prime degree up to $11$ by Figure \ref{fig:IsoGraphs}.

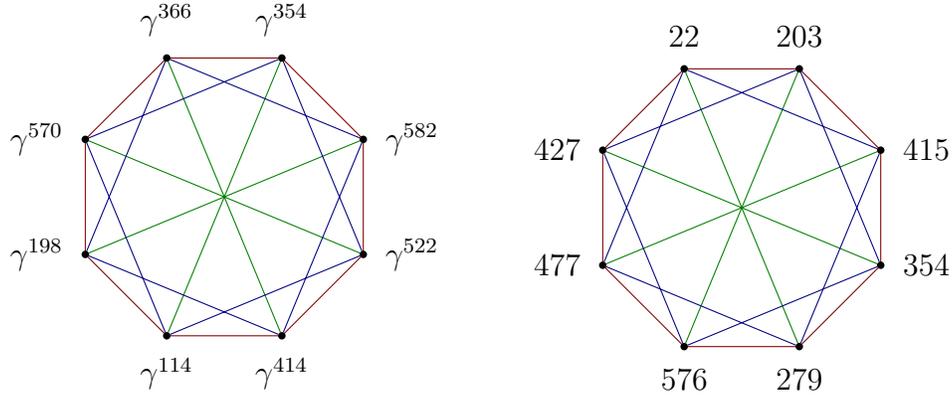
\begin{figure}[h]
    \centering
    \tikzset{
        summit/.style={inner sep=1pt,outer sep=0pt,circle,fill=black,text=white},
        }
    \begin{tikzpicture}
        \newdimen\R
        \R=2cm
        \draw[red!50!black] (-22.5:\R) \foreach \x [count=\i] in {0,45,...,315} {  -- (\x+22.5:\R) node[summit] (\i) {} };
        
        \begin{scope}[on background layer]
            \draw[green!50!black]
                (1) -- (5)
                (2) -- (6)
                (3) -- (7)
                (4) -- (8);
            \draw[blue!50!black]
                (1) -- (3) -- (5) -- (7) -- (1)
                (2) -- (4) -- (6) -- (8) -- (2);
        \end{scope}
        \node [label=right:{$\gamma^{582}$}] at (1) {};
        \node [label=above:{$\gamma^{354}$}] at (2) {};
        \node [label=above:{$\gamma^{366}$}] at (3) {};
        \node [label=left:{$\gamma^{570}$}] at (4) {};
        \node [label=left:{$\gamma^{198}$}] at (5) {};
        \node [label=below:{$\gamma^{114}$}] at (6) {};
        \node [label=below:{$\gamma^{414}$}] at (7) {};
        \node [label=right:{$\gamma^{522}$}] at (8) {};
    \end{tikzpicture} \qquad
    \begin{tikzpicture}
        \newdimen\R
        \R=2cm
        \draw[red!50!black] (-22.5:\R) \foreach \x [count=\i] in {0,45,...,315} {  -- (\x+22.5:\R) node[summit] (\i) {} };
        
        \begin{scope}[on background layer]
            \draw[green!50!black]
                (1) -- (5)
                (2) -- (6)
                (3) -- (7)
                (4) -- (8);
            \draw[blue!50!black]
                (1) -- (3) -- (5) -- (7) -- (1)
                (2) -- (4) -- (6) -- (8) -- (2);
        \end{scope}
        \node [label=right:{$415$}] at (1) {};
        \node [label=above:{$203$}] at (2) {};
        \node [label=above:{$22$}] at (3) {};
        \node [label=left:{$427$}] at (4) {};
        \node [label=left:{$477$}] at (5) {};
        \node [label=below:{$576$}] at (6) {};
        \node [label=below:{$279$}] at (7) {};
        \node [label=right:{$354$}] at (8) {};
    \end{tikzpicture}
    \caption{$\{$2,\textcolor{white!50!black}{3},\textcolor{blue!50!black}{5},\textcolor{red!50!black}{7},\textcolor{green!50!black}{11}$\}$-isogeny graphs for the Hasse pair $(5^4,587)$.
    The vertices are $j$-invariants over $\F_{5^4} = \F_{5}[\gamma] \simeq \F_5[x]/(x^4 + 4x^2 + 4x + 2)$ (left) and over $\F_{587}$ (right).} %$\c^4 + 4\c^2 + 4\c + 2 = 0$
    \label{fig:IsoGraphs}
\end{figure}
\end{ex}

\begin{ex}
    Since $\sqrt{1069} - \sqrt{1021} = 0.742\dots$, then $\qq = (1021,1069)$ is an odd Hasse pair.
    Its associated discriminant is $\D = -1875 = -3 \cdot 5^4$, so both sets $\E_i$ are ordinary by Lemma~\ref{lem:TypeSqr}, and their isogeny graphs are the same by Theorem~\ref{thm:graphsIso}.
    This correspondence is portrayed for isogenies of certain degrees by Figures \ref{fig:IsoGraphsj0A} and \ref{fig:IsoGraphsj0B}.

\begin{figure}[h]
    \centering
        \begin{tikzpicture}[scale=0.9]
        \node (1) at (0,4) {$0$};
        \node (2) at (-2.5,2) {$89$};
        \node (3) at (2.5,2) {$277$};
        \node (4) at (-4.0,0) {$33$};
        \node (5) at (-3.1,0) {$109$};
        \node (6) at (-2.2,0) {$462$};
        \node (7) at (-1.3,0) {$730$};
        \node (8) at (-0.4,0) {$894$};
        \node (9) at (0.5,0) {$931$};
        \node (10) at (1.3,0) {$782$};
        \node (11) at (2.2,0) {$912$};
        \node (12) at (3.1,0) {$143$};
        \node (13) at (4.0,0) {$439$};

        %\draw[green!50!black] ([xshift=0.15cm]1) arc(-65:250:0.4);
        \draw[->,green!50!black] (1)++(0.2,0)arc(-60:90:0.4);
        \draw[-,green!50!black] (1)++(0.2,0)arc(-60:240:0.4);
        
        \draw[->] (1) edge[bend right=20, blue!50!black] (2);
        \draw[-] (1) edge[blue!50!black] (2);
        \draw[->] (1) edge[bend left=20, blue!50!black] (2);
        
        \draw[->] (1) edge[bend right=20, blue!50!black] (3);
        \draw[-] (1) edge[blue!50!black] (3);
        \draw[->] (1) edge[bend left=20, blue!50!black] (3);
        
        %\draw[->] (2) edge[bend right=10, blue!50!black] (1);
        %\draw[->] (3) edge[bend left=10, blue!50!black] (1);

        \draw[-] (2) edge[blue!50!black] (4);
        \draw[-] (2) edge[blue!50!black] (5);
        \draw[-] (2) edge[blue!50!black] (6);
        \draw[-] (2) edge[blue!50!black] (7);
        \draw[-] (2) edge[blue!50!black] (8);

        \draw[-] (3) edge[blue!50!black] (9);
        \draw[-] (3) edge[blue!50!black] (10);
        \draw[-] (3) edge[blue!50!black] (11);
        \draw[-] (3) edge[blue!50!black] (12);
        \draw[-] (3) edge[blue!50!black] (13);

        \draw[-] (2) edge[green!50!black] (3);

        \draw[-] (4.south) edge[bend right, green!50!black] (13.south);
        \draw[-] (5.south) edge[bend right, green!50!black] (12.south);
        \draw[-] (6.south) edge[bend right, green!50!black] (11.south);
        \draw[-] (7.south) edge[bend right, green!50!black] (10.south);
        \draw[-] (8.south) edge[bend right, green!50!black] (9.south);
    \end{tikzpicture}
    \begin{tikzpicture}[scale=0.9]
        \node (1) at (0,4) {$0$};
        \node (2) at (-2.5,2) {$643$};
        \node (3) at (2.5,2) {$855$};
        \node (4) at (-4.0,0) {$1025$};
        \node (5) at (-3.1,0) {$883$};
        \node (6) at (-2.2,0) {$364$};
        \node (7) at (-1.3,0) {$266$};
        \node (8) at (-0.4,0) {$79$};
        \node (9) at (0.5,0) {$212$};
        \node (10) at (1.3,0) {$537$};
        \node (11) at (2.2,0) {$195$};
        \node (12) at (3.1,0) {$707$};
        \node (13) at (4.0,0) {$352$};

        %\draw[green!50!black] ([xshift=0.15cm]1) arc(-65:250:0.4);
        \draw[->,green!50!black] (1)++(0.2,0)arc(-60:90:0.4);
        \draw[-,green!50!black] (1)++(0.2,0)arc(-60:240:0.4);

        \draw[->] (1) edge[bend right=20, blue!50!black] (2);
        \draw[-] (1) edge[blue!50!black] (2);
        \draw[->] (1) edge[bend left=20, blue!50!black] (2);
        
        \draw[->] (1) edge[bend right=20, blue!50!black] (3);
        \draw[-] (1) edge[blue!50!black] (3);
        \draw[->] (1) edge[bend left=20, blue!50!black] (3);
        
        %\draw[->] (2) edge[bend right=10, blue!50!black] (1);
        %\draw[->] (3) edge[bend left=10, blue!50!black] (1);

        \draw[-] (2) edge[blue!50!black] (4);
        \draw[-] (2) edge[blue!50!black] (5);
        \draw[-] (2) edge[blue!50!black] (6);
        \draw[-] (2) edge[blue!50!black] (7);
        \draw[-] (2) edge[blue!50!black] (8);

        \draw[-] (3) edge[blue!50!black] (9);
        \draw[-] (3) edge[blue!50!black] (10);
        \draw[-] (3) edge[blue!50!black] (11);
        \draw[-] (3) edge[blue!50!black] (12);
        \draw[-] (3) edge[blue!50!black] (13);

        \draw[-] (2) edge[green!50!black] (3);

        \draw[-] (4.south) edge[bend right, green!50!black] (13.south);
        \draw[-] (5.south) edge[bend right, green!50!black] (12.south);
        \draw[-] (6.south) edge[bend right, green!50!black] (11.south);
        \draw[-] (7.south) edge[bend right, green!50!black] (10.south);
        \draw[-] (8.south) edge[bend right, green!50!black] (9.south);
    \end{tikzpicture}
    \caption{\{\textcolor{green!50!black}{3},\textcolor{blue!50!black}{5}\}-isogeny graph for the ordinary Hasse pair $(1021,1069)$. The vertices are $j$-invariants over $\F_{1021}$ (left) and $\F_{1069}$ (right).}
    \label{fig:IsoGraphsj0A}
\end{figure}
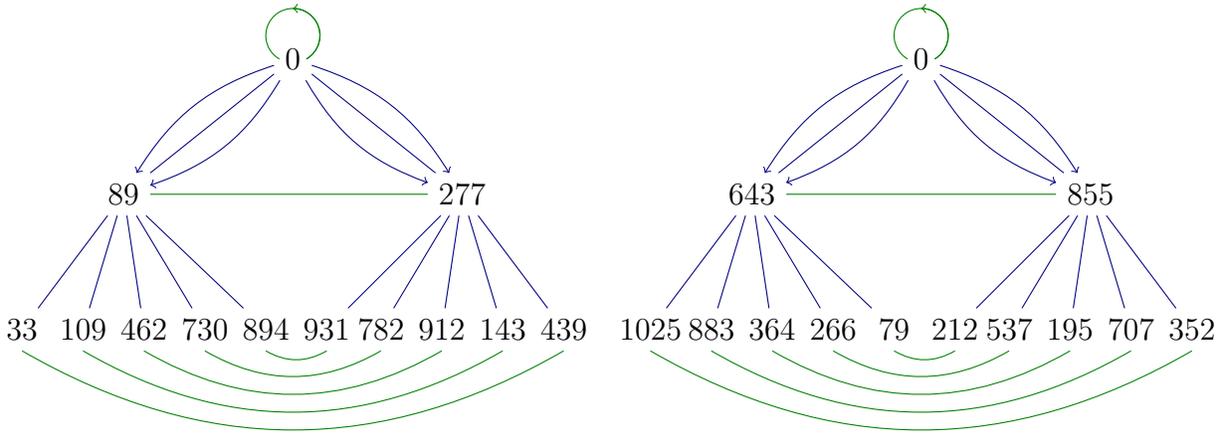

\begin{figure}[h]
    \centering
\begin{tikzpicture}[scale=0.9]
        \node (1) at (0,2.8) {$0$};
        \node (2) at (-1.5,1.3) {$89$};
        \node (3) at (1.5,1.3) {$277$};
        \node (4) at (-3.0,0) {$143$};
        \node (5) at (-1,0) {$462$};
        \node (6) at (1,0) {$782$};
        \node (7) at (3,0) {$894$};
        \node (8) at (4,-1) {$439$};
        \node (9) at (3,-2) {$109$};
        \node (10) at (1,-2) {$912$};
        \node (11) at (-1,-2) {$730$};
        \node (12) at (-3,-2) {$931$};
        \node (13) at (-4,-1) {$33$};

        \draw[red!50!black] ([xshift=0.15cm]1) arc(-65:250:0.4);
        %\draw[red!50!black] ([xshift=0.1cm]1) arc(-65:250:0.3);

        \draw[-] (2) edge[bend right=15, red!50!black] (3);
        \draw[-] (3) edge[bend right=15, red!50!black] (2);
        
        \draw[-] (4) edge[red!50!black] (5);
        \draw[-] (5) edge[red!50!black] (6);
        \draw[-] (6) edge[red!50!black] (7);
        \draw[-] (7) edge[bend left, red!50!black] (8);
        \draw[-] (8) edge[bend left, red!50!black] (9);
        \draw[-] (9) edge[red!50!black] (10);
        \draw[-] (10) edge[red!50!black] (11);
        \draw[-] (11) edge[red!50!black] (12);
        \draw[-] (12) edge[bend left, red!50!black] (13);
        \draw[-] (13) edge[bend left, red!50!black] (4);

        \draw[lime!50!black] ([xshift=0.15cm]1) arc(65:-250:0.4);
        %\draw[lime!50!black] ([xshift=0.1cm]1) arc(65:-250:0.3);

        \draw[lime!50!black] ([xshift=-0.2cm,yshift=0.27cm]2) arc(45:315:0.4);
        %\draw[lime!50!black] ([xshift=-0.2cm,yshift=0.2cm]2) arc(45:315:0.3);

        \draw[lime!50!black] ([xshift=0.2cm,yshift=0.27cm]3) arc(135:-135:0.4);
        %\draw[lime!50!black] ([xshift=0.2cm,yshift=0.2cm]3) arc(135:-135:0.3);

        \draw[-] (5) edge[bend right, lime!50!black] (7);
        \draw[-] (7) edge[lime!50!black] (9);
        \draw[-] (9) edge[bend right, lime!50!black] (11);
        \draw[-] (11) edge[lime!50!black] (13);
        \draw[-] (13) edge[lime!50!black] (5);

        \draw[-] (4) edge[bend right, lime!50!black] (6);
        \draw[-] (6) edge[lime!50!black] (8);
        \draw[-] (8) edge[lime!50!black] (10);
        \draw[-] (10) edge[bend right, lime!50!black] (12);
        \draw[-] (12) edge[lime!50!black] (4);
    \end{tikzpicture}
    \begin{tikzpicture}[scale=0.9]
        \node (1) at (0,2.8) {$0$};
        \node (2) at (-1.5,1.3) {$643$};
        \node (3) at (1.5,1.3) {$855$};
        \node (4) at (-3.0,0) {$537$};
        \node (5) at (-1,0) {$364$};
        \node (6) at (1,0) {$707$};
        \node (7) at (3,0) {$79$};
        \node (8) at (4,-1) {$352$};
        \node (9) at (3,-2) {$266$};
        \node (10) at (1,-2) {$195$};
        \node (11) at (-1,-2) {$883$};
        \node (12) at (-3,-2) {$212$};
        \node (13) at (-4,-1) {$1025$};

        \draw[red!50!black] ([xshift=0.15cm]1) arc(-65:250:0.4);
        %\draw[red!50!black] ([xshift=0.1cm]1) arc(-65:250:0.3);
        
        \draw[-] (2) edge[bend right=15, red!50!black] (3);
        \draw[-] (3) edge[bend right=15, red!50!black] (2);
        
        \draw[-] (4) edge[red!50!black] (5);
        \draw[-] (5) edge[red!50!black] (6);
        \draw[-] (6) edge[red!50!black] (7);
        \draw[-] (7) edge[bend left, red!50!black] (8);
        \draw[-] (8) edge[bend left, red!50!black] (9);
        \draw[-] (9) edge[red!50!black] (10);
        \draw[-] (10) edge[red!50!black] (11);
        \draw[-] (11) edge[red!50!black] (12);
        \draw[-] (12) edge[bend left, red!50!black] (13);
        \draw[-] (13) edge[bend left, red!50!black] (4);

        \draw[lime!50!black] ([xshift=0.15cm]1) arc(65:-250:0.4);
        %\draw[lime!50!black] ([xshift=0.1cm]1) arc(65:-250:0.3);

        \draw[lime!50!black] ([xshift=-0.2cm,yshift=0.27cm]2) arc(45:315:0.4);
        %\draw[lime!50!black] ([xshift=-0.2cm,yshift=0.2cm]2) arc(45:315:0.3);

        \draw[lime!50!black] ([xshift=0.2cm,yshift=0.27cm]3) arc(135:-135:0.4);
        %\draw[lime!50!black] ([xshift=0.2cm,yshift=0.2cm]3) arc(135:-135:0.3);

        \draw[-] (5) edge[bend right, lime!50!black] (7);
        \draw[-] (7) edge[lime!50!black] (9);
        \draw[-] (9) edge[bend right, lime!50!black] (11);
        \draw[-] (11) edge[lime!50!black] (13);
        \draw[-] (13) edge[lime!50!black] (5);

        \draw[-] (4) edge[bend right, lime!50!black] (6);
        \draw[-] (6) edge[lime!50!black] (8);
        \draw[-] (8) edge[lime!50!black] (10);
        \draw[-] (10) edge[bend right, lime!50!black] (12);
        \draw[-] (12) edge[lime!50!black] (4);
    \end{tikzpicture}
    \caption{\{\textcolor{red!50!black}{7},\textcolor{lime!50!black}{19}\}-isogeny graph for the ordinary Hasse pair $(1021,1069)$. The vertices are $j$-invariants over $\F_{1021}$ (left) and $\F_{1069}$ (right).}
    \label{fig:IsoGraphsj0B}
\end{figure}
\end{ex}

\begin{ex}
    Since $\sqrt{22501} = 150.003\dots$, then $\qq = (151^2,22501)$ is an odd Hasse pair.
    Since $\D = -507 = -3^2 \cdot 67$, all the curves are ordinary and the isogeny graphs are isomorphic by Theorem~\ref{thm:graphsIso}.
    Those graphs are shown for isogenies of certain degrees by Figure \ref{fig:IsoGraphsAnom}.
    We observe that all the $j$-invariants of $\E_1$ are already defined over $\F_{151}$, since $\iota_1 = 1$ (as defined in \eqref{eq:inertiadegree}). 
    In fact, one can verify that the resulting isogeny graph equals the isogeny graph of (isogeny classes of) anomalous curves over $\F_{151}$. %, as prescribed by Corollary \ref{cor:anormalous_proj}.

\begin{figure}[h]
    \centering
        \begin{tikzpicture}[scale=0.9]
        \node (1) at (0,3.5) {$125$};
        \node (2) at (-2.5,2) {$40$};
        \node (3) at (2.5,2) {$75$};
        \node (4) at (2.5,0) {$99$};
        \node (5) at (-2.5,0) {$96$};

        \draw[-,blue!50!black] (1)++(0.34,0.1)arc(-45:225:0.45);
        \draw[-,red!50!black] (1)++(0.44,0.1)arc(-45:225:0.6);

        \draw[-] (1) edge[green!50!black] (2);
        \draw[-] (1) edge[green!50!black] (3);
        \draw[-] (1) edge[green!50!black] (4);
        \draw[-] (1) edge[green!50!black] (5);

        \draw[-] (2) edge[blue!50!black] (3);
        \draw[-] (3) edge[blue!50!black] (4);
        \draw[-] (4) edge[blue!50!black] (5);
        \draw[-] (5) edge[blue!50!black] (2);

        \draw[-] (2) edge[bend right=5, red!50!black] (4);
        \draw[-] (2) edge[bend left=5, red!50!black] (4);
        \draw[-] (3) edge[bend right=5, red!50!black] (5);
        \draw[-] (3) edge[bend left=5, red!50!black] (5);
        
    \end{tikzpicture} \qquad
    \begin{tikzpicture}[scale=0.9]
        \node (1) at (0,3.5) {$12341$};
        \node (2) at (-2.5,2) {$337$};
        \node (3) at (2.5,2) {$620$};
        \node (4) at (2.5,0) {$4861$};
        \node (5) at (-2.5,0) {$6330$};

        \draw[-,blue!50!black] (1)++(0.31,0.2)arc(-45:225:0.45);
        \draw[-,red!50!black] (1)++(0.425,0.2)arc(-45:225:0.6);

        \draw[-] (1) edge[green!50!black] (2);
        \draw[-] (1) edge[green!50!black] (3);
        \draw[-] (1) edge[green!50!black] (4);
        \draw[-] (1) edge[green!50!black] (5);

        \draw[-] (2) edge[blue!50!black] (3);
        \draw[-] (3) edge[blue!50!black] (4);
        \draw[-] (4) edge[blue!50!black] (5);
        \draw[-] (5) edge[blue!50!black] (2);

        \draw[-] (2) edge[bend right=5, red!50!black] (4);
        \draw[-] (2) edge[bend left=5, red!50!black] (4);
        \draw[-] (3) edge[bend right=5, red!50!black] (5);
        \draw[-] (3) edge[bend left=5, red!50!black] (5);
    \end{tikzpicture}
    \caption{\{\textcolor{green!50!black}{3},\textcolor{blue!50!black}{17},\textcolor{red!50!black}{19}\}-isogeny graph for the ordinary Hasse pair $(151^2,22501)$. The vertices are $j$-invariants over $\F_{151} \hookrightarrow \F_{151^2}$ (left) and $\F_{22501}$ (right).}
    \label{fig:IsoGraphsAnom}
\end{figure}
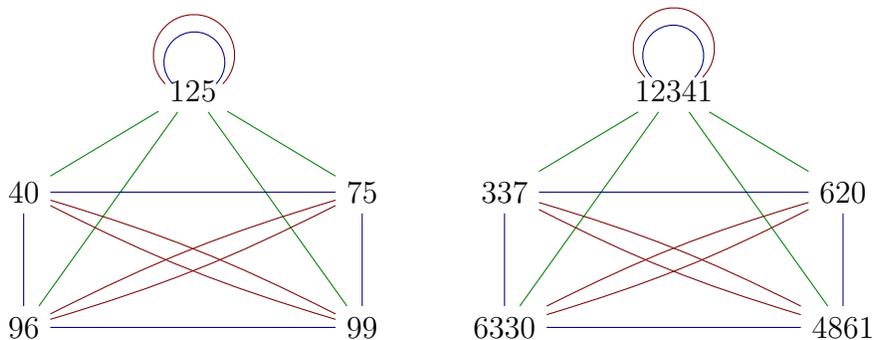
\end{ex}

%%%%%%%%%%%%%%%%%%%%%%%%%%%%%%%%%%%%%%%%%%%%%%%%%%%%%%%%%%%%%%%%%%%%%%%%%%%%%%%

\section{Conclusions} \label{conclusions}
In the present paper, for a given Hasse pair $\qq$ we investigated the properties of the sets $\E_1$ and $\E_2$. 

When the pair is odd, the results we obtained are summarized in Table \ref{table:odd-odd}.
In this case, we proved that $\E_1 \cup \E_2$ always contains an ordinary curve when it is non-empty, but we could not prove nor disprove whether such a union could be empty.
This remains an open problem for us (Problem \ref{prob:empty-empty}), and we believe that its difficulty lies in the fact that the prime powers involved should be very close, as well as meeting further modular conditions. 

\begin{table}[h]
    \ct
    \begin{tabular}{ |c|c|C{3.8cm}|C{3.8cm}|C{3.8cm}| } 
 \hline
   \multicolumn{2}{|c|}{ \textbf{$(q_1,q_2)$} } & \multicolumn{3}{c|}{ $\E_1$ } \\ 
   \cline{3-5}
   \multicolumn{2}{|c|}{ \textbf{odd Hasse pair} } & ordinary & supersingular & empty \\ 
 \hline
 
 & ordinary & Same isogeny graphs & Occurs with $j=0$ & May occur \\
  & & Theorem~\ref{thm:graphsIso} & \ct Proposition~\ref{prop:j=0}%, Ex. \ref{ex:ord-ss} 
  & E.g. $(7^2,43)$ \\ 
 \cline{2-5}
 $\E_2$ & supersingular & \cellcolor{lightgray} & \multicolumn{2}{c|}{ Never occurs } \\ 
   & & \cellcolor{lightgray} Symmetric & \multicolumn{2}{c|}{ Proposition~\ref{prop:ss} } \\ 
 \cline{2-2} \cline{4-5}
 & empty & \multicolumn{2}{c|}{\cellcolor{lightgray} see $(q_2,q_1)$ \hspace{4cm}\phantom{.}} & Open, \\ %? \\ 
  & & \multicolumn{2}{c|}{ \cellcolor{lightgray} } & Problem \ref{prob:empty-empty} \\ 
  
 \hline
\end{tabular}
    \caption{Possible configuration arising from odd Hasse pairs.}
    \label{table:odd-odd}
\end{table}

When the considered Hasse pair involves even entries, different cases may occur, as portrayed in Table \ref{table:even-odd}.
In this setting, it is possible to find only supersingular curves in $\E_1 \cup \E_2$.
These cases arise precisely when we have Fermat and Mersenne primes, therefore counting such cases is equivalent to the well-known open problem of deciding the finiteness of these families of primes (Remark \ref{rmk:FermatMersenne}).
Furthermore, the set $\E_i$ defined in even characteristic may be empty only if the other one (defined in odd characteristic) is also empty, but we have found only one instance of such a case.
Again, the empty cases look special, and establishing a complete classification of them seems challenging.

\begin{table}[h]
    \ct
\begin{tabular}{ |c|c|C{3.8cm}|C{3.8cm}|C{3.8cm}| } 
 \hline
   \multicolumn{2}{|c|}{ \textbf{$(2^{a_1},q_2)$} } & \multicolumn{3}{c|}{ $\E_1$ } \\
   \cline{3-5}
   \multicolumn{2}{|c|}{ \textbf{Hasse pair} } & ordinary & supersingular & empty \\ 
 \hline
 & ordinary & $q_2$ even, classified & May occur  & May occur \\
  & & Lemma~\ref{lem:ord-ord2} & Example \ref{ex:4-7} %E.g. $(2^3,3^2)$ 
  & E.g. $(2^4,11)$ \\ 
 \cline{2-5}
 $\E_2$ & supersingular &  & Classified & May occur \\ 
   & & Never occurs & Theorem~\ref{thm:even-ss} & E.g. $(2^3,7)$ \\ 
 \cline{2-2} \cline{4-5}
 & empty & Lemma~\ref{lem:ord-ord2} & Never occurs & May occur \\ 
   & &  & Theorem~\ref{thm:even-ss} & E.g. $(2^8,3^5)$ \\ %\footnote{This is the unique instance we have of $\E_1 \cup \E_2 = \emptyset$.}  \\ 
 \hline
\end{tabular}
\caption{Possible configuration arising from non-odd Hasse pairs.}
    \label{table:even-odd}
\end{table}

Furthermore, in the last section we observed that, regardless of the characteristic, when $\E_1$ and $\E_2$ are both ordinary, we have a perfect correspondence between their isogeny graphs.
This relation is effective only on paper at the moment, as we lack an efficient way of passing from one isogeny graph to the other.
Performing such correspondence efficiently, i.e. without passing through the construction of huge Hilbert class polynomials in characteristic zero, is regarded as an ambitious task, which could lead to interesting cryptographic applications.

Finally, similarly to how aliquot cycles extend the idea of amicable primes \cite{SilvStan}, one may aim at extending the ideas of the current paper by considering the tuples $(p_1^{a_1}, \dots, p_r^{a_r})$ of prime powers and elliptic curves $E_1, \dots, E_r$, such that for every $1 \leq i \leq r$, the curve $E_i$ is defined over $\F_{q_i}$ and
\[ \forall\, 1 \leq i \leq r-1 \ : \ |E_i| = q_{i+1}, \quad |E_r| = q_1. \]
When ordinary, all these curves should have the same isogeny graphs.

\iffalse
Such tuples of prime powers may arise as the consecutive values of monic quadratic polynomials, namely the characteristic polynomial of the desired Frobenius $\phi_{q_i}$.
In the case $r=2$, which is the case of the current paper, the characteristic polynomial
\[ f(x) = x^2 - t_1 x + q_1 \]
of $\phi_{q_1}$ satisfies $f(0) = q_1$ and $f(1) = q_2$, therefore we obtain the characteristic polynomial of $\phi_{q_2}$ as
\[ f(-x+1) = x^2 + (-2 + t_1) x + ( q_1+1-t_1 ) = x^2 - t_2 x + q_2.\]
However, we remark that, differently from aliquot cycles, it may be proved that similar prime-generating polynomials may not produce sequences of primes longer than $r = 40$ elements \cite{Euler,Lionnais}. \Eleni{We need to be specific here. As this is written, it is not true. Need "monic, with negative discriminant, corresponding to the class-number-1 imaginary quadratic number fields. We can cite \url{http://matwbn.icm.edu.pl/ksiazki/aa/aa74/aa7412.pdf} (See pg 21 in particular).}\fi

%%%%%%%%%%%%%%%%%%%%%%%%%%%%%%%%%%%%%%%%%%%%%%%%%%%%%%%%%%%%%%%%%%%%%%%%%%%%%%%

\section*{Acknowledgements}
This work was partially supported by the European Union’s H2020 Programme under grant agreement number ERC-669891. D.T. was also supported by the Research Foundation - Flanders (FWO mandate: 12ZZC23N).
E.A. would like to thank the CISPA Helmholtz Center for Information Security, and the Max Planck Institute for Mathematics in Bonn, for their hospitality and support during the writing of this paper. The authors are grateful to Pieter Moree for helpful discussions, insightful comments, and careful proof-reading of the paper.

%%%%%%%%%%%%%%%%%%%%%%%%%%%%%%%

\appendix

\section{Hasse pairs and Andrica's conjecture \\ by Pieter Moree and Efthymios Sofos}\label{Ap:PietSof}
%\include{FinalHassePairs/AppendixMoreeSofos}
\iffalse
\newtheorem{Thm}{Theorem}
\newtheorem{Con}{Conjecture}
\newtheorem{Lem}{Lemma}
\newtheorem{Quest}{Question}
\newtheorem{Problem}{Problem}
\newtheorem{Qu}{Question}
\newtheorem{Def}{Definition}
\newtheorem{Cor}{Corollary}
\newtheorem{rem}{Remark}
\newcommand{\pr}{{\bf Proof:}}
\newcommand{\Area}{\operatorname{Area}}
\newtheorem{Prop}{Proposition}
%\newcommand{\qed}{\hfill $\Box$}
\newcommand{\al}{\alpha}
\newcommand\bepsilon {\bar \epsilon}
\fi

Recall that a pair of prime
powers $(q_1,q_2)$
is called a \emph{Hasse pair} if
\begin{equation}
\label{Hinequality}
\mid\sqrt{q_1}-\sqrt{q_2}\mid
\le 1.
\end{equation}
If both $q_1$ and $q_2$ are odd, then the
pair is called an \emph{odd Hasse pair}.
Using Lemma \ref{lem:consecutiveprimepowers} it is easy to see that if $\sqrt{q_1}-\sqrt{q_2}=1$, then
necessarily $q_1=81$ and $q_2=64$, thus
replacing $\le 1$ by $<1$ is almost equivalent.

It is natural to ask if there are many Hasse pairs.
In this direction we contribute the following
result.
\begin{thm}
\label{main}
Fix any $\epsilon>0$. There exists a constant $c(\epsilon)$ such that
for every prime  $p<x$ and real number $x\ge 2$, with at most $c(\epsilon)\,x^{3/5+\epsilon}$
exceptions, there are at least $\frac12
\sqrt{p}\,(\log p)^{-1}$  Hasse pairs
$(p,\ell)$ with $\ell$ a prime.
\end{thm}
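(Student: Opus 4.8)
The plan is to recast the statement as a short-interval prime-counting problem and to control the exceptional primes through a second-moment estimate. By Definition~\ref{defn:HassePair}, for primes $p,\ell$ the pair $(p,\ell)$ is Hasse exactly when $(\sqrt{p}-1)^2\le \ell\le (\sqrt{p}+1)^2$, so the number of admissible $\ell$ is $\pi\big((\sqrt{p}+1)^2\big)-\pi\big((\sqrt{p}-1)^2\big)$, the number of primes in a window of length $4\sqrt{p}$ around $p$. I would work on dyadic blocks $X<p\le 2X$, where $\big[\,p-2\sqrt{X},\,p+2\sqrt{X}\,\big]\subseteq\big[(\sqrt{p}-1)^2,(\sqrt{p}+1)^2\big]$; thus with the fixed half-window $h=2\sqrt{X}$ it is enough to bound from below the number of primes in $[p-h,p+h]$. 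The expected count is $\approx 2h/\log p\approx 4\sqrt{X}/\log X$, far exceeding the required $\tfrac12\sqrt{p}\,(\log p)^{-1}$, and this factor-of-eight slack is what keeps the argument robust against the losses below.

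The heart of the matter is a mean-square bound for $\psi(x+h)-\psi(x-h)-2h$ with $h=2\sqrt{X}$. Via the truncated explicit formula one has $\psi(x+h)-\psi(x-h)-2h=-\sum_{|\gamma|\le T}\big((x+h)^{\rho}-(x-h)^{\rho}\big)/\rho+O\!\big(X(\log X)^2/T\big)$, with $\rho=\beta+i\gamma$ ranging over the nontrivial zeros of $\zeta$ and each term of size $\ll x^{\beta}\min(h/x,\,1/|\gamma|)$. Squaring, integrating over $[X,2X]$ and keeping the diagonal of the ensuing double sum over zeros reduces the estimate to $X\sum_{\rho}X^{2\beta}\min(h/X,1/|\gamma|)^2$, which I would bound by inserting Huxley's zero-density estimate $N(\sigma,T)\ll T^{(12/5)(1-\sigma)+\epsilon}$ and optimising in $T$. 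The target bound is $\int_{X}^{2X}\big|\psi(x+h)-\psi(x-h)-2h\big|^2\,dx\ll_{\epsilon}X^{8/5+\epsilon}$, and it is exactly this density exponent that manufactures the $3/5$ appearing in the theorem.

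The remainder is bookkeeping. A routine comparison of the integral with the values of the piecewise-constant function $x\mapsto \psi(x+h)-\psi(x-h)$ at integers shows that the number of $n\in(X,2X]$ with $\psi(n+h)-\psi(n-h)<h$ is, by Chebyshev's inequality, $\ll_{\epsilon}X^{8/5+\epsilon}/h^{2}\ll X^{3/5+\epsilon}$. Every other $n$ satisfies $\psi(n+h)-\psi(n-h)\ge h$; since only $O(1)$ prime powers $p^{k}$ with $k\ge 2$ can lie in a window of length $2h=4\sqrt{X}$, contributing $O(\log X)$ to $\psi$, this yields $\pi(n+h)-\pi(n-h)\ge (h-O(\log X))/\log(2X)\ge \tfrac12\sqrt{n}\,(\log n)^{-1}$ for $X$ large. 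Summing the count of bad integers over the dyadic blocks $X=x/2,x/4,\dots$ gives $\ll_{\epsilon}x^{3/5+\epsilon}$ in total (the geometric series being dominated by its largest term), and each exceptional prime is in particular an exceptional integer; for a good prime $p$ the primes $\ell\in[p-h,p+h]$, at most one of which equals $p$, provide the required Hasse pairs.

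The main obstacle is the mean-square estimate and, specifically, its exponent: obtaining $\ll_{\epsilon}X^{8/5+\epsilon}$ unconditionally rests entirely on a genuine zero-density theorem, and it is the strength of that input --- not the elementary reductions --- that fixes the size $x^{3/5+\epsilon}$ of the exceptional set; any sharpening of the density exponent would immediately improve it.
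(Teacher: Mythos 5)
Your elementary reductions (dyadic blocks, the window $[p-h,p+h]$ with $h=2\sqrt{X}$ inside the Hasse range, Chebyshev, the passage from $\psi$ to $\pi$) are all fine, apart from a harmless slip: for $p$ near the bottom of a block, $\sqrt{p}-\sqrt{X}$ can be less than $\tfrac12$, so $[p-2\sqrt{X},p+2\sqrt{X}]\not\subseteq[(\sqrt{p}-1)^2,(\sqrt{p}+1)^2]$; the overhang has length $<1$, hence contains at most one integer, so this is easily repaired. The genuine gap is the pivotal mean-square claim $\int_X^{2X}\bigl|\psi(x+h)-\psi(x-h)-2h\bigr|^2\,\mathrm{d}x\ll_{\epsilon}X^{8/5+\epsilon}$: it does not follow from the explicit formula plus Huxley, and it is essentially the whole content of the theorem. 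Run your own diagonal accounting: zeros with $\beta$ near $\sigma$ and $|\gamma|\sim T_1\le X/h\asymp X^{1/2}$ contribute about $X\cdot X^{2\sigma}(h/X)^2N(\sigma,T_1)\asymp X^{2\sigma}N(\sigma,T_1)$, and inserting $N(\sigma,T_1)\ll T_1^{(12/5)(1-\sigma)+\epsilon}$ at $T_1=X^{1/2}$ gives $X^{6/5+(4/5)\sigma+\epsilon}$. This exceeds your target $X^{8/5}$ as soon as $\sigma>1/2$ and tends to $X^{2-o(1)}$ as $\sigma\to1$; the only protection near $\sigma=1$ is the Vinogradov--Korobov zero-free region, which saves a factor $\exp\bigl(-c(\log X)^{1/3}\bigr)$, not a power of $X$. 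Worse, the blow-up persists under \emph{any} density estimate $N(\sigma,T)\ll T^{c(1-\sigma)+\epsilon}$, even the density hypothesis, since $X^{2\sigma}T_1^{c(1-\sigma)}=X^{2-(2-c/2)(1-\sigma)}\to X^{2}$: a single hypothetical zero with $\beta$ close to $1$ and $|\gamma|\asymp X^{1/2}$ already contributes $\asymp h^2X^{2\beta-1}\approx X^2$ to the second moment. So your scheme unconditionally yields only an exceptional set of size $X\exp\bigl(-c(\log X)^{1/3}\bigr)$ --- a true but much weaker statement --- and your closing remark that sharpening the density exponent would improve the $3/5$ is wrong for this route: power saving in the exceptional set cannot come from zero-density input alone (under RH, i.e.\ $\sigma=1/2$, your computation does give $\ll X^{3/2+\epsilon}$ and an exceptional set of size $X^{1/2+\epsilon}$, which shows how far the unconditional situation is from what you assumed).

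The paper does not attempt any of this. It deduces the theorem in a few lines from the cited Theorem~\ref{t.HB2} of Heath-Brown \cite{primegap}: letting $B_x$ be the set of primes $p\le x$ for which $(p,p+\sqrt{p})$ contains at most $\sqrt{p}/(2\log p)$ primes, one checks that for $p\in B_x$ every $y\in(p,p+1)$ violates the inequality of Theorem~\ref{t.HB2} with $h=\sqrt{p}$; the unit intervals $(p,p+1)$ are pairwise disjoint, so $|B_x|$ is bounded by the measure of the exceptional set, i.e.\ $\ll_{\epsilon}x^{3/5+\epsilon}$. The exponent $3/5$ is thus imported wholesale from \cite{primegap}, whose proof requires substantially more than Huxley's density theorem (Dirichlet-polynomial and sieve machinery developed across that series of papers). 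In effect your proposal tries to reprove Heath-Brown's theorem by a two-step diagonal argument, and that step fails; to make your write-up correct, replace the claimed mean-square bound by an appeal to a result of the strength of Theorem~\ref{t.HB2}, after which your surrounding bookkeeping goes through.
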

We will apply some ideas from
analytic number theory to this question. Since the number of primes up
to $x$ grows asymptotically as $x/\log x$ and the number of
prime powers $p^m$ with $m\ge 2$ only as $\sqrt{x}$, which is
$\le c(\epsilon)\,x^{3/5+\epsilon}$ for every $x$ sufficiently large, we see
that Theorem \ref{main} implies the following corollary.
\begin{corollary}
Theorem \ref{main} also holds true if instead of primes we consider prime powers.
\end{corollary}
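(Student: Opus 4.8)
The plan is to deduce the corollary directly from Theorem \ref{main} by an absorption argument that exploits the sparsity of the proper prime powers. The set of prime powers not exceeding $x$ decomposes as the disjoint union of the primes $p \le x$ and the \emph{proper} prime powers $p^m \le x$ with $m \ge 2$. The conclusion of Theorem \ref{main} is already phrased for Hasse pairs $(p,\ell)$ with $\ell$ prime, and since every prime is itself a prime power, that conclusion is unchanged (indeed only reinforced) if one allows the partner $\ell$ to range over prime powers. Hence the sole point requiring attention is the size of the exceptional set once the base is enlarged from primes to prime powers.

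First I would bound the number of proper prime powers up to $x$. Writing this count as $\sum_{m \ge 2} \pi(x^{1/m})$, the sum has $O(\log x)$ nonzero terms, each at most $x^{1/m} \le \sqrt{x}$, so it is trivially $O(\sqrt{x}\log x)$; the sharper estimate $\pi(t) = O(t/\log t)$ yields $O(\sqrt{x})$, which is what I would use. Next I would simply \emph{declare every proper prime power $\le x$ to be an exceptional element}. With this convention, the exceptional prime powers are precisely the exceptional primes of Theorem \ref{main} together with all proper prime powers, so their total number is at most $c(\epsilon)\, x^{3/5+\epsilon} + O(\sqrt{x})$.

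The final step is to absorb the new contribution into the stated bound. Since $\tfrac12 < \tfrac35$, we have $\sqrt{x} = x^{1/2} \le x^{3/5} \le x^{3/5+\epsilon}$ for all $x \ge 1$, so $O(\sqrt{x}) \le c'' x^{3/5+\epsilon}$ for a suitable constant $c''$ (enlarged if necessary to handle the bounded range of small $x \ge 2$). Setting $c'(\epsilon) = c(\epsilon) + c''$ then bounds the number of exceptional prime powers by $c'(\epsilon)\, x^{3/5+\epsilon}$. Every non-exceptional prime power is, by construction, a non-exceptional prime $p$, to which Theorem \ref{main} applies verbatim, giving at least $\tfrac12 \sqrt{p}\,(\log p)^{-1}$ Hasse pairs $(p,\ell)$ with $\ell$ prime — exactly the asserted conclusion.

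I expect no genuine obstacle here: the argument is a pure counting reduction, and the one inequality that must hold, $1/2 < 3/5+\epsilon$, is already encoded in the exponent $3/5$ appearing in Theorem \ref{main}. The only places demanding mild care are confirming that the number of proper prime powers is of strictly smaller order than $x^{3/5}$ (the elementary Chebyshev-type estimate above) and noting that interpreting the partner $\ell$ as a possible prime power only \emph{weakens} the requirement, so no part of the conclusion is lost in the passage from primes to prime powers.
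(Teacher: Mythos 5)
Your proposal is correct and follows essentially the same route as the paper: the authors likewise observe that the proper prime powers $p^m$ with $m \ge 2$ number only $O(\sqrt{x})$ up to $x$, which is absorbed into the $c(\epsilon)\,x^{3/5+\epsilon}$ exceptional bound, after which Theorem \ref{main} applies verbatim to the remaining (prime) elements. Your write-up merely makes explicit the counting details (the sum $\sum_{m\ge 2}\pi(x^{1/m})$ and the absorption inequality $x^{1/2} \le x^{3/5+\epsilon}$) that the paper leaves implicit.
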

Thus typically a large prime (power) occurs as member of a \emph{large number}
of Hasse pairs.
%Thus typically a large prime (power) frequently occurs as member of
%some Hasse pair.
However, we cannot exclude the existence of a prime power
%that does not occur in \emph{any} Hasse pair
that \emph{never} occurs as a member of a Hasse pair.
%We conjecture, but cannot exclude,
%that there are prime powers that \emph{never} occur as member of some
%Hasse pair.
\begin{Con}
\label{hassecon}
Every prime power is member of some Hasse pair.\\
Every odd prime power is member of some odd Hasse pair.
\end{Con}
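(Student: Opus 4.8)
The plan is to reduce both assertions to a single statement about primes in short intervals, and to be candid that this last step is exactly where the difficulty — and the link to Andrica's conjecture — lies. Fix a prime power $q$. Its Hasse interval is
\[ \left[(\sqrt{q}-1)^2,\ (\sqrt{q}+1)^2\right] = \left[q - 2\sqrt{q} + 1,\ q + 2\sqrt{q} + 1\right], \]
an interval of length $4\sqrt{q}$ centred at $q$. By definition, $q$ lies in some Hasse pair precisely when this interval contains a prime power other than $q$. Since the prime powers $p^m$ with $m \ge 2$ have counting function $O(\sqrt{x})$, they are far too sparse to be of use, so one must look for an honest prime. Thus the first assertion follows from the claim that for every sufficiently large $x$ there is a prime in $[x - 2\sqrt{x} + 1,\ x + 2\sqrt{x} + 1]$, together with a finite computation settling the small cases. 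The second assertion runs in parallel: for an odd prime power $q$ one only needs to avoid the \emph{at most one} power of $2$ in the interval (powers of $2$ are multiplicatively spaced, so at most one can fall in a window of length $4\sqrt{q}$, a fact in the spirit of Lemma~\ref{lem:consecutiveprimepowers}), which does not affect the density picture; hence it too reduces to finding a prime in the same interval, which is automatically odd once $q$ is large.

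First I would dispose of the small cases by direct verification, fixing the finitely many $q$ below the threshold at which the analytic estimates become effective. For the large cases, I would invoke Theorem~\ref{main}: outside an exceptional set of size $O(x^{3/5+\epsilon})$, every prime $p < x$ already belongs to many Hasse pairs. This reduces the problem to showing that no prime in the exceptional set is \emph{isolated}, i.e. that its Hasse interval still meets the primes. The heuristic weight therefore falls entirely on the primes — the higher prime powers $p^m$ with $m \ge 2$ are expected to be ``very far apart'' \cite{Pillai1,Pillai2} and contribute nothing.

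The hard part — and the reason the statement is phrased as a conjecture — is the pointwise prime-gap input. Writing $g_n = p_{n+1} - p_n$, the existence of a prime in each window of length $4\sqrt{x}$ is equivalent to the bound $g_n < 2\sqrt{p_n} + 1$, which is exactly Andrica's conjecture in the form $\sqrt{p_{n+1}} - \sqrt{p_n} < 1$: indeed $g_n < 2\sqrt{p_n}+1$ says precisely that $p_{n+1} \le (\sqrt{p_n}+1)^2$, so consecutive primes form a Hasse pair. This lies well beyond current technology. The best unconditional result, $g_n \ll p_n^{0.525}$, yields intervals \emph{longer} than $4\sqrt{p_n}$ because $0.525 > 1/2$, and even the Riemann Hypothesis only gives $g_n \ll \sqrt{p_n}\,\log p_n$, whose logarithmic factor again overshoots the Hasse width. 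Consequently I expect a full proof to be out of reach by present methods. The realistic outcome of this approach is a conditional theorem — any bound of the shape $g_n \le 2\sqrt{p_n}(1+o(1))$, in particular Andrica's conjecture, implies Conjecture~\ref{hassecon} for all but finitely many prime powers, with the remaining cases settled by the computation above — rather than an unconditional one.
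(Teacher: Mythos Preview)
Your proposal is correct in substance and takes essentially the same approach as the paper: you recognise that the statement is a conjecture, not a theorem, and you reduce it to Andrica's conjecture on prime gaps, which is precisely what the paper does in the Proposition immediately following Conjecture~\ref{hassecon}. The paper's reduction is crisper than yours---given a prime power $q$, let $p_j$ be the largest prime $\le q$; then Andrica gives $\sqrt{p_{j+1}}-\sqrt{q}\le \sqrt{p_{j+1}}-\sqrt{p_j}<1$, so $(q,p_{j+1})$ is a Hasse pair, automatically odd when $q$ is odd since $p_{j+1}>q\ge 3$---so there is no need for the two-sided interval, the detour through Theorem~\ref{main}'s exceptional set, or a separate finite computation.
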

In Section \ref{sec:Andrica} we show that this conjecture is intimately
connected with Andrica's conjecture.
Research on that conjecture then shows that Conjecture \ref{hassecon} is
certainly true for all prime powers $<10^{19}$.
Let $p_1=2, p_2=3,\ldots$ be the sequence of
consecutive primes.
\begin{prop}
If Andrica's conjecture that $(p_j,p_{j+1})$ is always a Hasse pair is
true, then so is Conjecture \ref{hassecon}.
\end{prop}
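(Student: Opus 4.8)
The plan is to produce, for an arbitrary prime power $q$, an explicit partner by taking the prime immediately above $q$ and verifying the Hasse inequality through monotonicity of the square root. First I would let $q$ be any prime power and set $\ell$ to be the least prime with $\ell > q$; writing $\ell = p_m$ in the sequence $p_1 = 2, p_2 = 3, \dots$, I note that $m \geq 2$ (since $\ell \geq 3$) and that the preceding prime satisfies $p_{m-1} \leq q$, because no prime can lie strictly between $q$ and its successor prime $\ell$.

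The key step is then a single chain of inequalities. Since $p_{m-1} \leq q < \ell = p_m$, monotonicity of $t \mapsto \sqrt{t}$ gives $\sqrt{\ell} - \sqrt{q} \leq \sqrt{p_m} - \sqrt{p_{m-1}}$, and the right-hand side is at most $1$ by the assumed Andrica inequality applied to the consecutive primes $p_{m-1}, p_m$. Hence $\mid\sqrt{q} - \sqrt{\ell}\mid \leq 1$, and $\ell \neq q$ because $\ell > q$, so $(q,\ell)$ is a Hasse pair in the sense of \eqref{Hinequality}. This shows that every prime power is a member of some Hasse pair, which is the first assertion of Conjecture~\ref{hassecon}.

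For the second assertion I would reuse the same partner $\ell$ and only append a parity remark. If $q$ is an odd prime power then $q \geq 3$, so the least prime $\ell > q$ satisfies $\ell > 2$ and is therefore odd; consequently both entries of $(q,\ell)$ are odd, and the Hasse pair constructed above is in fact an odd Hasse pair. Thus every odd prime power lies in an odd Hasse pair, which completes the argument.

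I do not expect a genuine obstacle here: the whole proof is elementary once Andrica's inequality is granted as a hypothesis. The only two points deserving (minor) care are the existence of the predecessor prime $p_{m-1}$, which is guaranteed because $q \geq 2$ forces $\ell \geq 3$ and hence $m \geq 2$, and the observation that one must pass to the prime \emph{above} $q$, so that the gap $\sqrt{\ell} - \sqrt{q}$ is dominated by the full consecutive-prime gap $\sqrt{p_m} - \sqrt{p_{m-1}}$ to which the hypothesis applies directly.
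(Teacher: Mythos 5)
Your proof is correct and follows essentially the same route as the paper: the partner you take (the least prime $\ell > q$) is exactly the $p_{j+1}$ in the paper's proof, where $p_j$ is the largest prime $\le q$, and the monotonicity chain $\sqrt{\ell}-\sqrt{q}\le\sqrt{p_m}-\sqrt{p_{m-1}}\le 1$ together with the parity remark for odd $q$ is precisely what the paper's one-line argument leaves implicit. Your version merely spells out these details (existence of the predecessor prime, distinctness of $q$ and $\ell$) more explicitly.
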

\begin{proof}
If $q$ is a prime power, we let $p_j$ be the largest prime $\le q$.
Then $(q,p_{j+1})$ is a Hasse pair by Andrica's conjecture.
If $q$ is odd, then $(q,p_{j+1})$ is an odd Hasse pair. \end{proof}

\subsection{Proof of Theorem \ref{main}}
Theorem \ref{main} is in essence a corollary of the following
deep result due to Heath-Brown \cite{primegap}.
\begin{thm}[Heath-Brown \cite{primegap}]
\label{t.HB2}
For any fixed $\epsilon>0$ the measure of the set of $y\in [0,x]$ such that
\[ \max_{0\leq h \leq \sqrt y}\left|\pi(y+h)-\pi(y)-\int_y^{y+h} \frac{\mathrm d t}{\log t}\right|\geq \frac{\sqrt y}{(\log y)(\log \log y)} \]
is $O_\epsilon(x^{3/5+\epsilon})$.
\end{thm}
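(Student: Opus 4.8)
The plan is to deduce Theorem~\ref{t.HB2} from a maximal mean-square (variance) estimate for primes in short intervals, converting it into a measure bound by Chebyshev's inequality. First I would replace the prime count by the Chebyshev function $\psi$: by partial summation,
\[
\pi(y+h)-\pi(y)-\int_y^{y+h}\frac{\mathrm{d}t}{\log t}
\;=\;\frac{1}{\log y}\big(\psi(y+h)-\psi(y)-h\big)+O\!\left(\frac{\sqrt{y}}{(\log y)^2}\right),
\]
the prime-power terms contributing only $O(\log y)$, since an interval of length $\sqrt{y}$ contains at most one perfect power near $y$. Thus the factor $1/\log y$ accounts exactly for the gap between the two thresholds, and the claim reduces, after a dyadic split $y\asymp Y$ over powers of $2$ up to $x$, to proving for each $Y$ that
\[
\textnormal{meas}\Big\{\, y\in[Y,2Y] : \max_{0\le h\le\sqrt{y}}\big|\psi(y+h)-\psi(y)-h\big|\ge \tfrac{\sqrt{Y}}{\log\log Y} \,\Big\} \ \ll_\epsilon\ Y^{3/5+\epsilon},
\]
as summing these contributions over dyadic $Y\le x$ returns $O_\epsilon(x^{3/5+\epsilon})$.

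Next I would invoke the truncated explicit formula: for a height parameter $T$ to be optimized,
\[
\psi(y+h)-\psi(y)-h \;=\; -\sum_{|\gamma|\le T}\frac{(y+h)^{\rho}-y^{\rho}}{\rho}\;+\;O\!\Big(\frac{Y(\log Y)^2}{T}\Big),
\]
where $\rho=\beta+i\gamma$ runs over the non-trivial zeros of $\zeta$. Writing $\frac{(y+h)^{\rho}-y^{\rho}}{\rho}=\int_y^{y+h}u^{\rho-1}\,\mathrm{d}u$ and using the trivial and oscillatory bounds $\big|\int_y^{y+h}u^{\rho-1}\,\mathrm{d}u\big|\ll\min\big(h\,Y^{\beta-1},\,Y^{\beta}/|\gamma|\big)$ lets me control each zero's contribution in terms of how close its real part $\beta$ is to $1$.

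The heart of the argument is the mean-square bound. I would compute $\int_Y^{2Y}\big|\sum_{|\gamma|\le T}\int_y^{y+h}u^{\rho-1}\,\mathrm{d}u\big|^2\,\mathrm{d}y$ by expanding the square and integrating termwise: the diagonal supplies the admissible variance, while the off-diagonal pairs $(\rho,\rho')$ are handled by a mean-value (large-sieve type) inequality exploiting the vertical spacing of zeros. Grouping zeros by $\beta\in[\sigma,\sigma+\mathrm{d}\sigma]$ and inserting a zero-density estimate of the shape $N(\sigma,T)\ll_\epsilon T^{c(1-\sigma)+\epsilon}$ — Huxley's bound with $c=12/5$ in the relevant range — converts the sum over zeros into an integral over $\sigma$ dominated by the zeros nearest the critical line. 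To pass from the fixed-$h$ second moment to the maximal version, I would use the monotonicity of $h\mapsto\psi(y+h)-\psi(y)$ together with Gallagher's maximal inequality, which upgrades the mean square to its supremum over $h\le\sqrt{y}$ at the cost of only logarithmic factors.

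Finally I would combine these ingredients: Chebyshev's inequality turns a maximal mean-square bound $\int_Y^{2Y}\max_{h\le\sqrt{y}}|\psi(y+h)-\psi(y)-h|^2\,\mathrm{d}y\ll M(Y,T)$ into the measure estimate $M(Y,T)\,(\log\log Y)^2/Y$, and I would optimize $T$ against the truncation error $Y(\log Y)^2/T$. The main obstacle is quantitative: extracting the precise exponent $3/5$ (rather than a weaker power) hinges on feeding in the sharpest available zero-density estimates across the whole range $\tfrac12\le\sigma\le1$ and on losing essentially nothing in the maximal inequality, since any slack in the density exponent $c$ or in the spacing/mean-value step directly degrades the exponent in $Y^{3/5+\epsilon}$. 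A secondary technical point is to verify that the reduction from $\pi$ to $\psi$ and the dyadic patching are uniform enough to preserve the $\log\log$ in the threshold.
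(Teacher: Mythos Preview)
The paper does not prove Theorem~\ref{t.HB2} at all: it is quoted verbatim as a deep result of Heath-Brown \cite{primegap} and used as a black box to deduce Theorem~\ref{main}. There is no ``paper's own proof'' to compare against; the appendix simply cites the statement and applies it.

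As a separate remark on the content of your sketch: the broad architecture (reduce $\pi$ to $\psi$, explicit formula, mean-square over a dyadic range, zero-density input, Chebyshev to pass to a measure bound, a maximal inequality to handle the sup over $h$) is indeed the classical route to results of this type. However, feeding Huxley's density exponent $c=12/5$ into that machine does \emph{not} produce the exponent $3/5$; it gives the older exponents (of Heath-Brown's earlier papers in the series and predecessors). The 2021 paper \cite{primegap} introduces substantial new ingredients---in particular a refined treatment of the zero sums going beyond a direct insertion of $N(\sigma,T)$---to reach $3/5+\epsilon$. So while your outline is a reasonable description of the \emph{genre} of proof, it would, as written, stall at a weaker exponent; the step ``inserting Huxley's bound \dots\ converts the sum over zeros into an integral \dots'' is exactly where the extra work lives, and it is not a routine optimization.
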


%\begin{Thm}
%For all primes $p<x$ with at most $O(x^{3/5+\epsilon})$
%exceptions, the interval $(p,p+\sqrt{p})$ contains at least
%$\sqrt{p}/(2\log p)$  primes.
%\end{Thm}

\noindent \textit{Proof of Theorem \ref{main}:} Let $B_x$ be the set of primes $p\in [2,x]$
such that the interval $(p,p+\sqrt{p})$ contains $\le \sqrt{p}/(2\log p)$ primes.
Our plan is to show that for any $p\in B_x\cap [2,x-1]$ the interval
  $(p,p+1)$ contributes towards the measure in Theorem~\ref{t.HB2}, from which we can directly
deduce that  the number of elements in $B_x$ is $c(\epsilon)\,x^{3/5+\epsilon}$.

For any $p\in B_x$ and $y\in (p,p+1)$
we have that the number of primes in the interval
$(y,y+\sqrt{p})$ is at most
2
plus the number of primes in the interval $(p,p+\sqrt{p})$, which by
assumption is $\le
2+\sqrt{p}/(2\log p)$.
Now for all sufficiently large $p$ by trivial calculus this is
$\le \sqrt{p}/(\log 2p) - \sqrt{p+1
}/(\log {(p+1
) \log\log(p+1
) })$
and by the monotonicity of the log function this is
\[ \le \int_y^{y+\sqrt{p}}\frac{\mathrm dt}{\log t}-\frac{\sqrt{p+1}}{\log(p+1) \log \log(p+1) },\]
but since
$y<p+1$,
this is
\[ \le \int_y^{y+\sqrt{p}}\frac{\mathrm dt}{\log t}-\frac{\sqrt y}{\log y\log \log y}.\]
Now taking $h=\sqrt{p}$ we note that $h\leq \sqrt{y}$
and hence we have shown that if $p\in B_x$, then every $y$ in the
interval $(p,p+1)$ violates the inequality in Theorem~\ref{t.HB2}.\hfill{$\square$}

%ES  Moved this explanation above in the proof:::
%ES But Theorem \ref{t.HB} gives the measure of all $y$ that violate the
%ESinequality of the theorem is $O(x^{3/5+\epsilon})$. Since every bad
%ESprime $p$ contributes measure at least $1$, this means
%ESthat we can only have $O(x^{3/5+\epsilon})$  bad primes $p\le x$.

\subsection{Andrica's conjecture}
\label{sec:Andrica}
Let $d_n:=p_{n+1}-p_n$ be the $n$th prime gap.
Over the centuries many conjectures involving
$d_n$ have been made. Here we are interested in a conjecture
made in 1986 by Dorin Andrica \cite{Andrica}.
%https://en.wikipedia.org/wiki/Andrica%27s_conjecture
\begin{Con}[Andrica \cite{Andrica}]
The inequality
\begin{equation}
\label{Andr1}
\sqrt {p_{n+1}}-\sqrt {p_{n}}<1
\end{equation}
holds for all $n$.
\end{Con}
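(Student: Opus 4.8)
The plan is to recast Andrica's inequality \eqref{Andr1} as a bound on prime gaps and then to reduce the conjecture to a sufficiently strong estimate for $d_n = p_{n+1}-p_n$. First I would rationalize: since both square roots are positive, writing \eqref{Andr1} as $\sqrt{p_{n+1}}<\sqrt{p_n}+1$ and squaring shows that it is equivalent to
\[ d_n = p_{n+1}-p_n < 2\sqrt{p_n}+1. \]
Thus Andrica's conjecture asserts exactly that every interval $(p_n,\,p_n+2\sqrt{p_n}+1)$ contains a prime, i.e.\ that prime gaps never exceed $2\sqrt{p_n}+1$. This is precisely the short-interval statement addressed, in averaged form, by Theorem \ref{main}: the Heath-Brown input (Theorem \ref{t.HB2}) shows that for all but $O_\epsilon(x^{3/5+\epsilon})$ primes $p\le x$ the even shorter interval $(p,\,p+\sqrt{p})$ already contains $\gg \sqrt{p}/\log p$ primes, whence in particular $d_n < \sqrt{p_n} < 2\sqrt{p_n}+1$ for such $p$. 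So Andrica's inequality is already known for \emph{almost all} primes, and the entire difficulty lies in the exceptional set.

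The natural approach would then be to upgrade this almost-all statement to one valid for \emph{all} $n$, by combining two ingredients: (i) a uniform upper bound for the largest gap below $x$, and (ii) direct numerical verification for small $n$. Under the Riemann Hypothesis one has Cramér's bound $d_n \ll \sqrt{p_n}\,\log p_n$, and the probabilistic Cramér heuristic predicts the far stronger $d_n \ll (\log p_n)^2$; either would comfortably beat the threshold $2\sqrt{p_n}+1$ up to the stray logarithmic factor. I would therefore try to sharpen the conditional bound enough to remove the extra $\log p_n$ at the $\sqrt{p_n}$ scale, for instance by feeding zero-density estimates into the counting supplied by Theorem \ref{t.HB2} so as to preclude a single exceptional gap of length $\ge 2\sqrt{p_n}+1$, while checking the conjecture by computation in the range already covered in the literature (all prime powers below $10^{19}$).

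The main obstacle is exactly this removal of the logarithm, uniformly in $n$. The unconditional record $d_n \ll p_n^{0.525}$ has exponent exceeding $1/2$, so it does not even reach the $\sqrt{p_n}$ barrier, and no current technique controls prime gaps down to size $2\sqrt{p_n}$ for \emph{every} $n$; even the RH-conditional bound falls short by a factor $\log p_n$. In effect the averaged result of Theorem \ref{main} is essentially the best one can presently assert, and passing from ``all but $O_\epsilon(x^{3/5+\epsilon})$ primes'' to ``all primes'' is precisely the content of the conjecture. For this reason I would not expect a complete proof, but rather the reduction above — making explicit that Andrica's conjecture is equivalent to $d_n<2\sqrt{p_n}+1$ for all $n$ — together with the conditional and almost-all statements already recorded in the appendix, and the numerical confirmation that underlies Conjecture \ref{hassecon}.
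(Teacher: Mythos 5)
This statement is an open conjecture: the paper offers no proof of it, recording only the equivalent reformulation $d_n<2\sqrt{p_n}+1$ (its \eqref{Andr2}), the numerical verification up to about $1.8\cdot 10^{19}$ due to Visser, and the partial results of Cram\'er, Carneiro--Milinovich--Soundararajan, and Baker--Harman--Pintz. Your proposal derives exactly the same equivalence, correctly locates the obstruction in the exceptional set left over from the almost-all statement of Theorem \ref{main}, and rightly declines to claim a full proof, so it matches the paper's treatment in substance and approach.
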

A lot of numerical work on large gaps has
been done.
This can be used to infer that
Andrica's conjecture holds for $p_n\le 2\cdot 2^{63}\approx 1.8\cdot
10^{19}$; see Visser \cite{Visser}.

An equivalent formulation of Andrica's conjecture is that
\begin{equation}
\label{Andr2}
d_{n}<2\sqrt {p_{n}}+1
\end{equation}
holds for all $n$. It seems to be far out of reach, as under RH the best
result is due to Cram\'er \cite{Cramer} who showed in
already 1920 that $d_n=O(\sqrt{p_n}\log p_n)$. More explicitly,
Carneiro et al.\,\cite{CMS} showed under RH that $d_n\le
\frac{22}{25}\sqrt{p_n}\log p_n$
for every $p_n>3$. Unconditionally Baker et
al.\,\cite{BHP} showed that $d_n<p_n^{0.525}$ for all $n$ sufficiently
large.

Heath-Brown used Theorem \ref{t.HB2}  to obtain the
following result on large gaps between consecutive primes,
which is a culmination of an enormous amount of research done over
many years.
\begin{thm}[Heath-Brown \cite{primegap}]
\label{t.HB}
We have
\[\sum_{\substack{p_n\le x\\ p_{n+1}-p_n\ge\sqrt{p_n}}}(p_{n+1}-p_n)\ll_{\epsilon} x^{3/5+\epsilon}.\]
\end{thm}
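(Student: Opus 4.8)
The plan is to deduce Theorem~\ref{t.HB} from Theorem~\ref{t.HB2} by showing that each large prime gap forces an entire sub-interval of real numbers $y$ to violate the discrepancy inequality there, and that the total length of these sub-intervals dominates the sum $\sum d_n$. Throughout, write $d_n = p_{n+1}-p_n$, and restrict attention to those $n$ with $p_n \le x$ and $d_n \ge \sqrt{p_n}$.

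Fix such an $n$ and a point $y$ in the open gap $(p_n,p_{n+1})$. Whenever $0 \le h \le \sqrt{y}$ and $y+h < p_{n+1}$, the interval $(y,y+h]$ contains no prime, so $\pi(y+h)-\pi(y)=0$ and the discrepancy appearing in Theorem~\ref{t.HB2} is exactly $\int_y^{y+h}(\log t)^{-1}\,\mathrm d t$. The idea is to choose $h$ just large enough that this integral beats the threshold $\sqrt{y}/((\log y)(\log\log y))$. Taking $h=3\sqrt{y}/\log\log y$ (admissible once $\log\log y\ge 3$, since then $h\le\sqrt y$), and bounding $\log t\le \log(2y)\le 2\log y$ throughout the range of integration, the key estimate is
\[ \int_y^{y+h}\frac{\mathrm d t}{\log t}\ \ge\ \frac{h}{2\log y}\ =\ \frac{3\sqrt{y}}{2(\log y)(\log\log y)}\ >\ \frac{\sqrt{y}}{(\log y)(\log\log y)}. \]
Hence every $y$ for which this $h$ is admissible, i.e. with $y+3\sqrt{y}/\log\log y< p_{n+1}$, lies in the exceptional set of Theorem~\ref{t.HB2}.

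Let $I_n$ be the set of such $y$; it is a sub-interval of the gap of length $|I_n| \ge d_n - 3\sqrt{p_{n+1}}/\log\log p_{n}$. Here lies the only real subtlety: for gaps of the minimal admissible size $d_n \approx \sqrt{p_n}$, a naive choice $h=\sqrt{y}$ would discard a right-hand piece of length $\approx \sqrt{p_{n+1}} \approx d_n$, leaving $I_n$ essentially empty. The extra $\log\log y$ in the threshold of Theorem~\ref{t.HB2} is precisely what rescues the argument: since $d_n \ge \sqrt{p_n}$ forces $\sqrt{p_{n+1}} \le \sqrt{2}\,d_n$ (using $p_{n+1}\le 2p_n$ when $d_n\le p_n$, and $p_{n+1}<2d_n$ otherwise), one loses only an $O(1/\log\log p_n)$ fraction of each gap, so that $|I_n| \ge d_n/2$ for all $n$ outside a fixed bounded range of small primes. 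This boundary regime $d_n\approx\sqrt{p_n}$ is the main obstacle; everything else is bookkeeping with the logarithmic integral.

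Finally I would assemble the sum. The intervals $I_n$ sit inside pairwise disjoint gaps and are therefore themselves pairwise disjoint, and by Bertrand's postulate every prime $p_{n+1}$ with $p_n\le x$ satisfies $p_{n+1}<2x$, so $\bigcup_n I_n \subseteq [0,2x]$. As every point of $\bigcup_n I_n$ is exceptional for Theorem~\ref{t.HB2} applied on $[0,2x]$, we obtain
\[ \sum_{\substack{p_n\le x\\ d_n\ge\sqrt{p_n}}} d_n\ \le\ 2\,\Big|\bigcup_n I_n\Big| + O(1)\ \ll_\epsilon\ (2x)^{3/5+\epsilon}\ \ll_\epsilon\ x^{3/5+\epsilon}, \]
where the $O(1)$ absorbs the finitely many small primes (below a fixed bound, where $\log\log p_n<3$ or $|I_n|\ge d_n/2$ may fail), whose gaps sum to an absolute constant. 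This yields the claimed bound.
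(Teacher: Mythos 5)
Your proposal is correct, but there is nothing in the paper to compare it against line by line: the paper states Theorem~\ref{t.HB} as a quoted result of Heath-Brown \cite{primegap} without proof, remarking only that it is obtained from Theorem~\ref{t.HB2}. Your argument supplies exactly that missing deduction, and it runs parallel to the one proof the paper does write out, namely that of Theorem~\ref{main}: in both cases a bad configuration (there, a prime with few primes in $(p,p+\sqrt p)$; here, a gap with $d_n\ge\sqrt{p_n}$) is shown to force a whole interval of real numbers $y$ into the exceptional set of Theorem~\ref{t.HB2}, and the conclusion follows by comparing measures. The delicate points in your write-up all check out: for $y\in(p_n,p_{n+1})$ with $y+h<p_{n+1}$ the discrepancy is exactly $\int_y^{y+h}\mathrm{d}t/\log t\ge h/(2\log y)$, which with $h=3\sqrt y/\log\log y$ strictly beats the threshold $\sqrt y/((\log y)(\log\log y))$ once $\log\log y\ge 3$; the inequality $\sqrt{p_{n+1}}\le\sqrt2\,d_n$ (a consequence of $d_n\ge\sqrt{p_n}$) gives $|I_n|\ge d_n/2$ for $\log\log p_n\ge 6\sqrt2$, an astronomically large but absolute cutoff, below which the gaps contribute only $O(1)$; and disjointness of the gaps together with Bertrand's postulate justifies applying Theorem~\ref{t.HB2} on $[0,2x]$. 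Your observation about the borderline regime is also accurate and worth keeping: since the discrepancy over an interval of length $h$ is only about $h/\log y$, beating a threshold of size $\sqrt y/\log y$ would force $h=(1-o(1))\sqrt y$ and leave essentially no admissible $y$ when $d_n\approx\sqrt{p_n}$, so the $\log\log y$ saving in Theorem~\ref{t.HB2} is genuinely what creates the slack; your choice $h\asymp\sqrt y/\log\log y$ is the one that maximizes the surviving subinterval (any fixed fraction $h=c\sqrt y$ with $c<1/\sqrt2$ would also do, for the same reason).
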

For a table of earlier (larger) exponents than $3/5$ achieved and their authors, see Moree \cite{NAW}.

The authors (with Kosyak and Zhang) \cite{KMSZ} came across the quantity
on the left hand side in Theorem \ref{t.HB} as essentially being the number of failures up to $x$ for a
method of construction they have for producing cyclotomic polynomials
$\Phi_n(x)$ having maximum coefficient $h$, with $h\le x$.
If a slightly stronger variant of Andrica's conjecture holds true, then they showed
that \emph{every} natural number occurs as maximum coefficient of some cyclotomic polynomial.
For a more
leisurely account, see Moree \cite{NAW}.

\subsection{Andrica's conjecture for denser sequences}
In our context a slightly weaker version of Andrica's conjecture comes
up, namely:
\begin{Con}[Andrica for prime powers]
Let $q_1,q_2,\ldots$ be the consecutive prime powers.
The inequality
\begin{equation}
\label{AndrPP}
\sqrt {q_{n+1}}-\sqrt {q_{n}}<1
\end{equation}
holds for all $n$.
\end{Con}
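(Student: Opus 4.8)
The statement \eqref{AndrPP} is a conjecture, so the honest goal is not an unconditional proof but a \emph{reduction}: I would show that the classical Andrica conjecture \eqref{Andr1} already implies \eqref{AndrPP}. The guiding idea is that the sequence of prime powers is obtained from the sequence of primes $p_1<p_2<\cdots$ by inserting the proper prime powers $p^k$, $k\ge 2$, and that inserting a point into an increasing sequence can only split a gap into two smaller gaps. Measured in the metric $x\mapsto\sqrt{x}$, one therefore expects every gap between consecutive prime powers to be dominated by a gap between consecutive primes, where \eqref{Andr1} already exerts control.

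To make this precise, I would start from a pair of consecutive prime powers $q_n<q_{n+1}$ and observe that the open interval $(q_n,q_{n+1})$ contains no prime: a prime there would itself be a prime power lying strictly between $q_n$ and $q_{n+1}$, contradicting consecutiveness. Letting $p_k$ be the largest prime with $p_k\le q_n$ and $p_{k+1}$ the next prime, it follows that $p_{k+1}\ge q_{n+1}$, hence $[q_n,q_{n+1}]\subseteq[p_k,p_{k+1}]$. Since $x\mapsto\sqrt{x}$ is increasing, this yields $\sqrt{q_{n+1}}-\sqrt{q_n}\le\sqrt{p_{k+1}}-\sqrt{p_k}$, and the right-hand side is $<1$ by \eqref{Andr1}. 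Because \eqref{Andr1} is strict, this delivers \eqref{AndrPP} in its strict form, with no separate treatment of the borderline value $1$ required.

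I would then record two complements. First, the same containment transports numerical verification essentially for free: since \eqref{Andr1} is known for $p_n\le 2\cdot 2^{63}$, the inequality \eqref{AndrPP} is verified for all consecutive prime powers in essentially the same range. Second, I would flag that the borderline difference $1$ is genuinely attained by \emph{non}-consecutive prime powers, for instance $q_1=81$ and $q_2=64$ with $\sqrt{81}-\sqrt{64}=1$ (the primes $67,71,73,79$ lie between them, so these are not consecutive prime powers); this is the source of the ``$\le 1$ versus $<1$'' discussion and explains why the conjecture is stated with a strict inequality. The elementary input controlling such equality cases, via squares of consecutive prime powers, is Lemma~\ref{lem:consecutiveprimepowers}.

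The principal obstacle is that this argument is only a reduction, and in fact \eqref{AndrPP} is strictly \emph{weaker} than \eqref{Andr1}: a hypothetical failure of Andrica across a prime gap that happens to contain a proper prime power could be split into two admissible subgaps, so one cannot hope to recover \eqref{Andr1} from \eqref{AndrPP}. Consequently \eqref{AndrPP} is no more accessible than classical Andrica itself, which remains open and, as recalled above, lies beyond current technology even under the Riemann Hypothesis: the sharpest conditional gap bounds give only $d_n=O(\sqrt{p_n}\log p_n)$, overshooting the required $d_n<2\sqrt{p_n}+1$ of \eqref{Andr2} by a logarithmic factor. Closing that logarithmic gap is the true difficulty, and it resides entirely in the classical conjecture rather than in the passage from primes to prime powers.
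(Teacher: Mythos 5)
You have correctly recognized that the statement is a conjecture, and indeed the paper offers no proof of it: in the appendix it is simply stated, accompanied by the remark that it is a ``slightly weaker'' version of Andrica's conjecture \eqref{Andr1}, weaker only marginally since proper prime powers up to $x$ number about $\sqrt{x}$. Your reduction makes explicit the implication the paper leaves implicit, and it is correct: for consecutive prime powers $q_n<q_{n+1}$, the open interval $(q_n,q_{n+1})$ contains no prime, so taking $p_k$ the largest prime $\le q_n$ gives $[q_n,q_{n+1}]\subseteq[p_k,p_{k+1}]$, and monotonicity of $x\mapsto\sqrt{x}$ yields $\sqrt{q_{n+1}}-\sqrt{q_n}\le\sqrt{p_{k+1}}-\sqrt{p_k}<1$ under \eqref{Andr1}. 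This is precisely the sandwich device the paper itself deploys one conjecture over, in its Proposition that Andrica implies Conjecture~\ref{hassecon} (take the largest prime $p_j\le q$ and invoke \eqref{Andr1}), so your argument is squarely in the paper's spirit even though the paper never writes down this particular implication; your observation that Visser's verification of \eqref{Andr1} up to $2\cdot 2^{63}$ transports for free to \eqref{AndrPP} is likewise sound and consistent with the paper's use of that computation.

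Two small caveats on wording. First, ``\eqref{AndrPP} is strictly weaker than \eqref{Andr1}'' cannot be a theorem --- if both conjectures hold they are equivalent as true statements --- and your own justification shows only that the reduction is one-directional: prime-power Andrica applied across a prime gap containing a proper prime power recovers merely $\sqrt{p_{j+1}}-\sqrt{p_j}<2$. Phrase it as an asymmetry of the implication, which is what you in substance argue. Second, Lemma~\ref{lem:consecutiveprimepowers} controls equality cases $\sqrt{q}-\sqrt{q'}=1$ only when the forced consecutive prime powers $m=\sqrt{q'}$ and $m+1=\sqrt{q}$ both have exponent $\ge 2$; pairs such as $(9,4)$ or $(16,9)$, coming from exponent-one members like $(3,4)$ or $(2,3)$, also attain equality, so the lemma does not single out $(81,64)$ among all prime-power pairs (a sloppiness already present in the appendix's own remark). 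Neither point affects your reduction, which needs only the strictness of \eqref{Andr1}.
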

Since the number of primes up to $x$ grows asymptotically as $x/\log x$
and the number of
prime powers $p^m$ with $m\ge 2$ as $\sqrt{x}$, this modified version
is, unfortunately, not that much weaker.
One could hope that if one replaces the sequence of prime numbers by a
substantially `fatter' sequence, one might be
able to prove the analogue of Andrica's conjecture. Finally, we present
an example showing that this hope is not unjustified.

A positive integer $n$ is
said to be \emph{practical} if every positive integer $\le n$
is a subset-sum of the positive divisors of
$n$.
Weingartner \cite{wein}
showed that there is a constant
$c = 1.33607\ldots$ such that the number of
practical numbers $\le x$ grows as $cx/\log x$.
The analogue of Andrica's conjecture for the practical numbers is known
to be true, see
Wu \cite{Wu}.

\medskip\noindent {\footnotesize Max-Planck-Institut f\"ur Mathematik,\\
Vivatsgasse 7, D-53111 Bonn, Germany.\\
E-mail: {\tt moree@mpim-bonn.mpg.de}}\\

\medskip\noindent{\footnotesize School of Mathematics and Statistics,\\
University of Glasgow,\\
University Place, Glasgow, G12 8SQ, United Kingdom.\\
E-mail: {\tt efthymios.sofos@glasgow.ac.uk}}\\

%%%%%%%%%%%%%%%%%%%%%%%%%%%%%%%%%%%%%%%%%

\bibliographystyle{amsplain}

\newpage

%%%%%%%%%%%%%%%%%%%%%%%%%%%%%%%%%%%%%%%%%%%%%%%%%%%%%%%%%%%%%%%%%%%%%%%%%%%%%%%

%%%%%%%%%%%%%%%%%%%%%%%%%%%%%%%%%%%%%%%%%%%%%%%%%%%%%%%%%%%%%%%%%%%%%%%%%%%%%%%
\end{document}